\theoremstyle{plain}
\newtheorem{theorem}{Theorem}
\newtheorem{lemma}{Lemma}
\newtheorem{proposition}{Proposition}
\newtheorem{corollary}{Corollary}
\theoremstyle{definition}
\newtheorem{definition}{Definition}
\theoremstyle{remark}
\newtheorem{remark}{Remark}
\newcommand{\esssup}{{\mathrm{ess}\sup\,}}
\newcommand{\essinf}{{\mathrm{ess}\inf\,}}
\begin{document}

\title{\bf\Large Optimization of the principal eigenvalue\\ of the Neumann Laplacian with indefinite weight \\ and monotonicity of minimizers in cylinders}

\author{Claudia Anedda\footnote{Department of Mathematics and Computer Science,
University of Cagliari, Via Ospedale 72, Cagliari, 09124,  Italy (\tt canedda@unica.it).}\; and\;
Fabrizio Cuccu\footnote{Department of Mathematics and Computer Science,
University of Cagliari, Via Ospedale 72, Cagliari, 09124,  Italy (\tt fcuccu@unica.it).}}

\maketitle
\begin{abstract}
 \noindent Let $\Omega\subset\mathbb{R}^N$, $N\geq 1$, be an open bounded connected set. We consider the indefinite 
weighted  eigenvalue problem   $-\Delta u =\lambda m u$ in $\Omega$ with $\lambda \in \mathbb{R}$,
$m\in L^\infty(\Omega)$ and with homogeneous Neumann boundary conditions. We study weak* continuity, 
convexity and G\^ateaux differentiability of   
the map $m\mapsto1/\lambda_1(m)$, where $\lambda_1(m)$ is the principal eigenvalue. 
Then, denoting by $\mathcal{G}(m_0)$ the class of rearrangements of a fixed weight $m_0$,
under the assumptions that $m_0$ is positive on a set of positive Lebesgue measure and $\int_\Omega m\,dx<0$, 
we prove the existence and a characterization of minimizers of $\lambda_1(m)$ and the non-existence of
maximizers.    
Finally, we show that, if $\Omega$ is a cylinder, then every minimizer is
monotone with respect to the direction of the generatrix. 
In the context of the population dynamics, this kind of problems arise from the 
question of determining the optimal spatial location of favourable and unfavourable habitats 
for a population to survive.  
\end{abstract}

\noindent {\bf Keywords}: principal eigenvalue, Neumann boundary conditions, indefinite weight, optimization, 
monotonicity, population dynamics.  

\smallskip
\noindent {\bf Mathematics Subject Classification 2020}: 47A75,  35J25, 35Q92.

\section{Introduction and main results}\label{intro}

In this paper we consider the weighted eigenvalue problem with homogeneus Neumann boundary conditions  
\begin{equation}\label{p0}
\begin{cases}-\Delta u =\lambda m u \quad &\text{in } \Omega\\
\cfrac{\partial u}{\partial \nu}=0  &\text{on } \partial\Omega, \end{cases}  
\end{equation}  
where $\Omega\subset\mathbb{R}^N$ is an open bounded domain with Lipschitz boundary $\partial\Omega$, 
 $m\in L^\infty(\Omega)$ changes sign in $\Omega$, $\lambda\in\mathbb{R}$
and $\nu$ is the outward unit normal vector on $\partial \Omega$.\\  
An eigenvalue $\lambda$ of \eqref{p0} is called {\it principal eigenvalue} if it admits a positive eigenfunction. Clearly, $\lambda =0$ is a principal eigenvalue with positive constants as its eigenfunctions. Problem \eqref{p0} has been studied in various papers (see, for example, \cite{Bo,BL,SH}). In particular, it is known that there is a positive (respectively negative) principal eigenvalue if and only if $\int_\Omega m\, dx<0$ (respectively $\int_\Omega m\, dx>0$). For the sake of completeness and in order to maintain this paper self-contained, we prefer to give here
(see Section \ref{preliminaries}) an independent proof of the result above. Moreover, we show that, under the previous hypothesis on $m$, there exists an increasing (respectively decreasing) sequence of     
positive (respectively negative) eigenvalues. The smallest positive eigenvalue is the principal eigenvalue, which will be denoted by $\lambda_1(m)$.\\   
Problem \eqref{p0} and its variants play a crucial role in studying nonlinear models from 
population  dynamics (see \cite{S}) and population genetics (see \cite{F}).           
We illustrate in details the following model in population dynamics devised by Skellam \cite{S}
\begin{equation}\label{p1}
\begin{cases}
v_t=\Delta v+\gamma v[m(x)-v] \quad &\text{in }\Omega\times(0,\infty),\\
\cfrac{\partial v}{\partial \nu}\,=0 & \text{on } \partial\Omega\times(0,\infty),\\
v(x,0)=v_0(x)\geq 0, \  v(x,0)\not \equiv 0& \text{in } \overline{\Omega}.
\end{cases}  
\end{equation}

In \eqref{p1} $v(x,t)$ represents the density of a population inhabiting the region $\Omega$ at location $x$ and time $t$ (for that reason, only non-negative solutions of \eqref{p1} are of interest), $v_0$ is the initial density and $\gamma$ is a positive parameter. The function $m(x)$ represents the intrinsic local grow rate of
the population, it is positive on favourable habitats and negative on 
unfavourable ones and it mathematically describes the available resources in the spatially heterogeneous environment
$\Omega$. The integral $\int_\Omega m\, dx$ can be interpreted as a measure of the total resources in $\Omega$. The Neumann conditions in \eqref{p1} are zero-flux boundary conditions: it means that no individuals cross the boundary of the habitat, i.e. the boundary acts as a barrier.\\
The model \eqref{p1} can be also considered with Neumann boundary conditions replaced by the homogeneous Dirichlet or Robin conditions. In the first case, the environment $\Omega$ is surrounded by a completely inhospitable region, i.e. any individual reaching the boundary dies, while in the second some individuals reaching the boundary die
and the others return to the interior of $\Omega$.  
It is known (see \cite{CC} and references therein) that \eqref{p1} predicts persistence for the population if $\lambda_1(m)<\gamma$. As a consequence, determining the best spatial arrangement of favourable and unfavourable habitats for the survival, within a fixed class of environmental configurations, results in minimizing $\lambda_1(m)$ over the corresponding class of weights. Having information of this type could affect, for example, on the strategies to be adopted for the conservation of species with limited resources.\\  
This kind of problem has been investigated by many other authors. The question of determining the optimal spatial     arrangements of favourable and unfavourable habitats in $\Omega$ for the survival of the modelled population
was first addressed by Cantrell and Cosner in \cite{CC, CC91}. The authors considered the 
diffusive logistic equation \eqref{p1} with homogeneous Dirichlet boundary conditions and when the weight $m$ has fixed maximum, minimum and integral over $\Omega$. The analogous problem with Neumann boundary conditions has been analysed
by Lou and Yanagida in \cite{LY}. Berestycki et al. \cite{BHR} investigated a model similar to \eqref{p1} in the case of periodically fragmented environment ($\Omega=\mathbb{R}^N$ and   
$m(x)$ periodic), Roques and Hamel \cite{RH} studied the optimal arrangement of resources by using numerical 
computation, Jha and Porru \cite{JP}, among other things, exhibited an example of symmetry breaking of the optimal 
arrangement of the local growth rate. Lamboley et al. \cite{LLNP} investigated model \eqref{p1} with Robin boundary 
conditions. Mazari et al. \cite{MNP} studied several shape optimization problems arising in population dynamics,
we refer the reader to it for a review of current knowledge on the subject.
We also mention Cadeddu et al. \cite{CFP}, which considered mixed boundary conditions, Derlet et al. \cite{DGT}, that 
extended these type of results to the principal eigenvalue associated to 
the $p$-Laplacian operator,  Pellacci and Verzini \cite{PV} to the fractional Laplacian operator and  Dipierro et al. 
\cite{DPLV} to a mixed 
local and nonlocal operator. \\ 
In order to present our work, we briefly give some notations and definitions here. 
We denote by $\lambda_k(m)$, $k\in \mathbb{N}$, the $k$-th positive eigenvalue of problem
 \eqref{p0} corresponding to the weight $m$ (assuming $ \int_\Omega m\,dx<0$). We say that
 two Lebesgue measurable functions $f,g:\Omega \to \mathbb{R}$
are \emph{equimeasurable} if the superlevel sets
$\{x\in \Omega: f(x)>t\}$ and $\{x\in \Omega: g(x)>t\}$ have the same  measure for all $t\in \mathbb{R}$. For a fixed $f\in L^\infty(\Omega)$, we call the set
$\mathcal{G}(f)=\{g:\Omega\to\mathbb{R}: g \text{ is measurable and $g$ and
$f$ are equimeasurable}\}$ the
\emph{class of rearrangements of $f$} (see Subsection
 \ref{rearrangements}).
Moreover, we introduce the set 
$L^\infty_<(\Omega)=\left\{m\in L^\infty (\Omega): \int_\Omega m\,dx <0\right\}$. \\
 The present paper contains three main results. First, we study the dependence of $\lambda_k(m)$ on $m$, in 
 particular we investigate
 continuity and, for $k=1$, convexity and differentiability properties (see Lemmas \ref{teo1ii},  \ref{teo2} and \ref{teo3}).
Second, we examine the optimization of $\lambda_1(m)$ in the class of rearrangements $\mathcal{G}(m_0)$
 of a fixed function $m_0\in L^\infty_<(\Omega)$.
Precisely, we prove the existence of minimizers, a characterization of them in terms of the 
 eigenfunctions relative to $\lambda_1(m)$ and a non-existence result for the maximizers.
\begin{theorem}\label{exist}
Let $\lambda_1(m)$ be the principal eigenvalue of problem
\eqref{p0},   
$m_0\in L_<^\infty(\Omega)$    
such that the set $\{x\in \Omega:m_0(x)>0\}$ has positive Lebesgue measure, $\mathcal{G}(m_0)$ the class of rearrangements
of $m_0$ (see Definition \ref{class}) and $\overline{\mathcal{G}(m_0)}$ its weak* closure in $L^\infty(\Omega)$.
Then \\    
i) the problem   
\begin{equation}\label{infclos0}
\min_{m\in{\overline{\mathcal{G}(m_0)}}} \lambda_1(m)
\end{equation} 
admits solutions and any solution $\check{m}_1$ belongs to $\mathcal{G}(m_0)$;\\
ii) for every solution $\check{m}_1\in\mathcal{G}(m_0)$ of \eqref{infclos0}, 
 there exists an increasing function $\psi$ such
that      
\begin{equation}\label{carat}\check{m}_1= \psi(u_{\check{m}_1}) \quad \text{a.e. in }\Omega, \end{equation} where 
$u_{\check{m}_1}$ is 
the unique positive eigenfunction relative to $\lambda_1(\check{m}_1)$ normalized 
as in \eqref{normaliz1};\\
iii)
\begin{equation*}
\sup_{m\in{\mathcal{G}(m_0)}} \lambda_1(m)=+\infty.
\end{equation*}
\end{theorem}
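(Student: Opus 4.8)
The plan is to recast everything in terms of the functional $F(m):=1/\lambda_1(m)$, for which Lemmas \ref{teo1ii}, \ref{teo2} and \ref{teo3} provide, respectively, weak* continuity, convexity and G\^ateaux differentiability on $L^\infty_<(\Omega)$, together with the variational characterization
\begin{equation*}
\frac{1}{\lambda_1(m)}=\sup\left\{\frac{\int_\Omega m\,v^2\,dx}{\int_\Omega|\nabla v|^2\,dx}:\ v\in H^1(\Omega),\ \int_\Omega m\,v^2\,dx>0\right\}.
\end{equation*}
Minimizing $\lambda_1$ over $\overline{\mathcal{G}(m_0)}$ is the same as maximizing $F$ there. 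First I would settle existence in (i): every $m\in\overline{\mathcal{G}(m_0)}$ inherits $\int_\Omega m\,dx=\int_\Omega m_0\,dx<0$ (the constant $1$ is an admissible test function for weak* convergence), so $\overline{\mathcal{G}(m_0)}\subset L^\infty_<(\Omega)$; moreover $\overline{\mathcal{G}(m_0)}$ is bounded in $L^\infty$, hence weak* compact by Banach--Alaoglu, and weak* closed by definition. Since $F$ is weak* continuous, it attains its maximum on this set, yielding a solution $\check{m}_1$ of \eqref{infclos0}.

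The heart of the argument, and the main obstacle, is to upgrade membership from $\overline{\mathcal{G}(m_0)}$ to $\mathcal{G}(m_0)$ and to obtain \eqref{carat}. Here I would invoke the optimization theory for convex functionals on classes of rearrangements (Burton's theorem; see Subsection \ref{rearrangements}): since $F$ is convex, weak* continuous and G\^ateaux differentiable, every maximizer of $F$ over the convex, weak* compact set $\overline{\mathcal{G}(m_0)}$ in fact lies in $\mathcal{G}(m_0)$ and is an increasing function of the gradient of $F$. To make this explicit I would compute the derivative by the envelope (Danskin) principle: with $u_m$ the positive eigenfunction normalized as in \eqref{normaliz1}, the G\^ateaux derivative of $F$ at $m$ in the direction $h$ equals $\big(\int_\Omega|\nabla u_m|^2\,dx\big)^{-1}\int_\Omega h\,u_m^2\,dx$, so the $L^1$-gradient of $F$ at $\check{m}_1$ is a positive multiple of $u_{\check{m}_1}^2$. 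Burton's theorem then gives an increasing $\phi$ with $\check{m}_1=\phi\big(u_{\check{m}_1}^2\big)$ a.e., and since $u_{\check{m}_1}>0$ and $t\mapsto t^2$ is increasing on $(0,\infty)$, setting $\psi(s):=\phi(s^2)$ yields \eqref{carat} with $\psi$ increasing. The delicate point is that this pointwise identity is guaranteed by the abstract theorem only off the flat level sets of the gradient; to obtain \eqref{carat} a.e. for \emph{every} minimizer one must rule out level sets $\{u_{\check{m}_1}=c\}$ of positive measure. I expect this to be the hardest step: on such a set $u_{\check{m}_1}$ is a.e. constant, so $\nabla u_{\check{m}_1}=0$ and $\Delta u_{\check{m}_1}=0$ a.e. there, whence the eigenvalue equation forces $m=0$ on that set; one then excludes this using the structure of $\mathcal{G}(m_0)$ together with unique continuation for $-\Delta u=\lambda_1 m\,u$, making $\psi$ genuinely monotone and \eqref{carat} valid a.e.

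Finally, for (iii) I would exhibit a maximizing sequence for $\lambda_1$ inside $\mathcal{G}(m_0)$ by \emph{homogenization to the negative mean}. Partition $\Omega$ into $n$ pairwise congruent subdomains and place in each a measure-preserving rescaled copy of $m_0$; the resulting $m_n$ is equimeasurable with $m_0$, so $m_n\in\mathcal{G}(m_0)$, while $m_n\rightharpoonup^{*}\overline{m}:=\frac{1}{|\Omega|}\int_\Omega m_0\,dx$ in $L^\infty(\Omega)$ by the standard oscillation argument. Since $\overline{m}$ is a negative constant, the Rayleigh ratio above is nonpositive for every $v$, whence $1/\lambda_1(\overline{m})=0$, i.e. $\lambda_1(\overline{m})=+\infty$. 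By the weak* continuity of $F$ (Lemma \ref{teo1ii}) we get $1/\lambda_1(m_n)\to 0$, that is $\lambda_1(m_n)\to+\infty$, proving $\sup_{m\in\mathcal{G}(m_0)}\lambda_1(m)=+\infty$. To avoid appealing to continuity at the boundary value $\overline{m}$, the same conclusion follows directly: taking near-minimizers $u_n$ with $\int_\Omega m_n u_n^2\,dx=1$ and splitting $u_n$ into its mean and a zero-average part, the Poincar\'e inequality bounds the fluctuation in $H^1$, a quadratic estimate forces the means to stay bounded, and passing to the limit gives $1=\int_\Omega m_n u_n^2\,dx\to\int_\Omega\overline{m}\,u^2\,dx\le 0$, a contradiction.
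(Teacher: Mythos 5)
Your overall architecture matches the paper's: recast the problem as maximizing $\widetilde{\mu}_1(m)=1/\lambda_1(m)$, use weak* compactness of $\overline{\mathcal{G}(m_0)}$ and the weak* continuity of Lemma \ref{teo1ii} for existence, then Burton's rearrangement theory together with the G\^ateaux derivative $u_m^2$ of Lemma \ref{teo3} for the characterization, and degeneration to the negative constant mean for (iii). However, there is a genuine gap at the decisive step. You assert that since $F$ is convex, weak* continuous and G\^ateaux differentiable, \emph{every} maximizer over $\overline{\mathcal{G}(m_0)}$ lies in $\mathcal{G}(m_0)$ and equals an increasing function of the gradient. Convexity plus differentiability alone cannot deliver this (a constant functional satisfies all three hypotheses and has maximizers everywhere in the closure), and the functional at hand is demonstrably \emph{not} strictly convex on $\overline{\mathcal{G}(m_0)}$: the paper proves exactly this in the corollary following Lemma \ref{teo2}, exhibiting a whole segment of weights on which $\widetilde{\mu}_1\equiv 0$. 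What rescues the argument in the paper is the conditional strictness of Lemma \ref{teo2} ii) --- strict convexity along segments joining \emph{linearly independent} weights with positive $\widetilde{\mu}_1$ --- combined with Corollary \ref{minore} ii) (two distinct elements of $\overline{\mathcal{G}(m_0)}$, sharing the same negative integral, are automatically linearly independent) and the extreme-point characterization of $\mathcal{G}(m_0)$ in Proposition \ref{convexity} iii): a maximizer that is not an extreme point would be the midpoint of two distinct maximizers, which the conditional strict convexity forbids. The same strictness is what legitimizes the appeal to Burton: Proposition \ref{Teobart} requires the linearized problem $\max_{m\in\mathcal{G}(m_0)}\int_\Omega m\,u_{\check{m}_1}^2\,dx$ to have a \emph{unique} solution, and the paper proves this strict maximality (inequality \eqref{silvia}) by combining the subgradient inequality \eqref{maria} with Lemma \ref{teo2} ii). Your proposal never establishes this uniqueness, so neither the membership $\check{m}_1\in\mathcal{G}(m_0)$ nor the applicability of Burton's theorem is actually secured.

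Conversely, the step you single out as hardest --- ruling out level sets $\{u_{\check{m}_1}=c\}$ of positive measure via unique continuation --- is a misdiagnosis, and the proposed fix would fail: on such a level set the equation only forces $\check{m}_1=0$ a.e.\ there, which is perfectly possible when $m_0$ vanishes on a set of positive measure, so such level sets cannot in general be excluded. Fortunately they need not be: Burton's result in the form of Proposition \ref{Teobart} yields $\check{m}_1=\psi\big(u_{\check{m}_1}^2\big)$ a.e.\ with $\psi$ increasing, with no caveat about flat pieces, once uniqueness of the linear maximizer is in hand; composing with the square of the positive eigenfunction then gives \eqref{carat} exactly as you intended. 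As for (iii), your second, direct argument (mean/fluctuation splitting plus Poincar\'e--Wirtinger, forcing the contradiction $1\leq 0$ in the limit) is sound, but your tiling construction of $m_n\in\mathcal{G}(m_0)$ weak*-converging to the mean is not available for a general Lipschitz domain: one cannot partition an arbitrary $\Omega$ into congruent pieces carrying rescaled copies of $m_0$. The paper sidesteps any construction by observing that the constant $c=\frac{1}{|\Omega|}\int_\Omega m_0\,dx$ already belongs to $\overline{\mathcal{G}(m_0)}$ (Propositions \ref{prec2} and \ref{convexity}), where $\widetilde{\mu}_1(c)=0$, and then invoking weak* density of $\mathcal{G}(m_0)$ in its closure together with the sequential weak* continuity of $\widetilde{\mu}_1$; you should route your argument the same way.
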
    

We note that the class of weights usually considered in literature, i.e. a set of bounded functions with fixed
maximum, minimum and integral over $\Omega$, can be written as $\overline{\mathcal{G}(m_0)}$ for a
$m_0$ which takes exactly two values (functions of this kind are called of ``bang-bang'' type). 
This fact is proved in \cite{AC}.\\
From the biological point of view, i) of Theorem 1 says that there exists an arrangement of the 
resources  that maximizes the chances of survival and, in this case, the population density is
larger where the habitat is more favourable. On the other hand, iii) means that there are 
configurations of resources as bad (i.e. inhospitable) as one prescribes.\\
Our third main result is the following 
\begin{theorem}\label{steiner}
Let $\Omega=(0,h)\times\omega\subset\mathbb{R}^N$, $h>0$ and $\omega\subset\mathbb{R}^{N-1}$ be a bounded smooth domain.  Let ${m}_0\in L_<^\infty(\Omega)$ such that the set $\{x\in \Omega:m_0(x)>0\}$ has positive Lebesgue measure and $\mathcal{G}(m_0)$ the class of rearrangements
of $m_0$ (see Definition \ref{class}). Then every minimizer of \eqref{infclos0}
is monotone with respect to $x_1$, where $x_1$ is the first coordinate of $\mathbb{R}^N$. 
\end{theorem}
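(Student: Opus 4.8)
The plan is to reduce the monotonicity of the minimizer $\check{m}$ to the monotonicity of its associated positive eigenfunction, and then to force the latter by a Steiner symmetrization argument coupled with the minimality of $\check{m}$. Throughout I use the variational characterization
\[
\frac{1}{\lambda_1(m)}=\max\left\{\frac{\int_\Omega m v^2\,dx}{\int_\Omega|\nabla v|^2\,dx}:\ v\in H^1(\Omega),\ \int_\Omega|\nabla v|^2\,dx>0,\ \int_\Omega m v^2\,dx>0\right\},
\]
so that minimizing $\lambda_1$ over $\overline{\mathcal{G}(m_0)}$ is the same as maximizing the right-hand side, the maximum for $m=\check{m}$ being realized by $u:=u_{\check m}$. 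By part ii) of Theorem~\ref{exist} any minimizer $\check{m}$ of \eqref{infclos0} satisfies \eqref{carat}, i.e.\ $\check{m}=\psi(u)$ a.e.\ in $\Omega$ with $\psi$ increasing. Since $\psi$ is increasing, if $u$ is monotone (say non-increasing) in $x_1$ then so is $\check{m}=\psi(u)$; hence \emph{it suffices to prove that $u$ is monotone in $x_1$}.

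The cylinder has no center in the $x_1$-direction, so I first even-reflect across the face $\{x_1=0\}$: set $\tilde\Omega=(-h,h)\times\omega$ and extend $u$ and $\check{m}$ evenly in $x_1$ to $\tilde u,\tilde m$. The homogeneous Neumann condition on $\{x_1=0\}$ guarantees $\tilde u\in H^1(\tilde\Omega)$, that it solves the reflected problem with Neumann data on $\partial\tilde\Omega$, and that $\tilde m$ is the even reflection of $\check m$. I then apply one-dimensional Steiner symmetrization in the $x_1$-variable about $\{x_1=0\}$, writing $\tilde u^{\,S},\tilde m^{\,S}$ for the symmetrized functions, and use three standard facts: (a) Steiner symmetrization preserves equimeasurability, so the restriction of $\tilde m^{\,S}$ to $\Omega$ lies in $\mathcal{G}(m_0)$; (b) the P\'olya--Szeg\H{o} inequality $\int_{\tilde\Omega}|\nabla\tilde u^{\,S}|^2\le\int_{\tilde\Omega}|\nabla\tilde u|^2$; and (c) the Hardy--Littlewood inequality $\int_{\tilde\Omega}\tilde m^{\,S}(\tilde u^{\,S})^2\ge\int_{\tilde\Omega}\tilde m\,\tilde u^2$. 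By evenness each integral is twice its counterpart on $\Omega$, so, restricting back, the competitor $m^*:=\tilde m^{\,S}|_\Omega\in\mathcal{G}(m_0)$ and the test function $u^*:=\tilde u^{\,S}|_\Omega$ satisfy
\[
\frac{1}{\lambda_1(m^*)}\ \ge\ \frac{\int_\Omega m^*(u^*)^2\,dx}{\int_\Omega|\nabla u^*|^2\,dx}\ \ge\ \frac{\int_\Omega \check m\,u^2\,dx}{\int_\Omega|\nabla u|^2\,dx}\ =\ \frac{1}{\lambda_1(\check m)}.
\]
Since $\check m$ minimizes $\lambda_1$ we also have $\lambda_1(m^*)\ge\lambda_1(\check m)$, and therefore equality holds throughout; in particular equality holds in both (b) and (c).

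It remains to deduce from equality in the P\'olya--Szeg\H{o} inequality that $\tilde u$ itself is Steiner symmetric about $\{x_1=0\}$, i.e.\ $\tilde u=\tilde u^{\,S}$; restricted to $\Omega$ this says precisely that $u$ is non-increasing in $x_1$, whence $\check m=\psi(u)$ is non-increasing as well. The delicate point, which I expect to be the main obstacle, is the equality case of the Steiner P\'olya--Szeg\H{o} inequality: a priori equality only forces, for a.e.\ transverse $x'\in\omega$, that the fiber $x_1\mapsto\tilde u(x_1,x')$ be a \emph{translate} of its symmetric decreasing rearrangement, by a shift $\tau(x')$ that could depend on $x'$. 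To rule out such wandering shifts I would invoke the regularity of $\tilde u$ (elliptic regularity gives $\tilde u\in C^{1,\alpha}$, since $\check m\in L^\infty(\Omega)$), a Brothers--Ziemer type condition ensuring that the critical set $\{\nabla\tilde u=0\}$ meets a.e.\ level set in a null set, and a connectedness argument on $\omega$ forcing $\tau(\cdot)$ to be constant; combined with the even symmetry this constant must be $0$, giving $\tilde u=\tilde u^{\,S}$. A variant that lightens the rigidity analysis is to repeat the symmetrization after reflecting across $\{x_1=h\}$ as well and to exploit the uniqueness of the positive principal eigenfunction (normalized as in \eqref{normaliz1}) to pin down the shift, but in either route the equality-case rigidity is the crux of the argument.
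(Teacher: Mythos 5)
Your reduction via \eqref{carat} to the monotonicity of the eigenfunction, the even reflection across $\{x_1=0\}$, and the Hardy--Littlewood/P\'olya--Szeg\H{o} equality chain all match the paper's proof (the paper additionally routes the Rayleigh quotient through the projection $P_{\check m^\star}$ to work in $V_{\check m^\star}(\Omega)$, but your use of the unconstrained characterization is legitimate). The genuine gap is exactly at the point you yourself flag as the crux, and it cannot be closed the way you propose. First, the conclusion you aim for, $\tilde u=\tilde u^{\,S}$ (hence $u$ non-increasing), is \emph{false} as stated: the reflection $x_1\mapsto h-x_1$ maps minimizers of \eqref{infclos0} to minimizers, so there exist minimizers whose eigenfunction is non-decreasing in $x_1$; for such a $u$ every inequality in your chain is an equality while $\tilde u\neq\tilde u^{\,S}$, so any argument ``pinning the shift to $0$'' must be wrong, and the only provable rigidity is the alternative $u=u^\star$ or $u=u_\star$. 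Second, the Brothers--Ziemer translate statement does not apply in this setting even granting the critical-set condition (which you do not verify and which is not known a priori for eigenfunctions): the even reflection of an $x_1$-increasing fiber, $w(x_1)=W(|x_1|)$ with $W$ strictly increasing, has disconnected superlevel sets $\{|x_1|>s\}$ in $(-h,h)$, attains \emph{equality} in the one-dimensional P\'olya--Szeg\H{o} inequality (its rearrangement is $W(h-|x_1|)$, with the same Dirichlet integral), and is not a translate of its rearrangement inside the bounded interval --- translates exit the domain. Brothers--Ziemer is a theorem on $\mathbb{R}^n$; on a bounded cylinder one needs the equality case for the \emph{monotone} rearrangement, which is Theorem 3.1 of Berestycki--Lachand-Robert and yields precisely the dichotomy, not a shift.

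The paper closes the rigidity step by an entirely different mechanism, which you would need to adopt (or replicate) to complete your outline. From equality in the chain it deduces $u_{\check m}^\star=u_{\check m^\star}=:v$ and that $\check m^\star$ is itself a minimizer; hence by \eqref{carat} applied to $\check m^\star$ one has $\check m^\star=\Psi(v)$ with $\Psi$ increasing, so $v$ solves $-\Delta v=\check\lambda_1\Psi(v)v$ with Neumann conditions. A Riemann--Stieltjes integration-by-parts in the $x_1$ variable, exploiting that $\Psi(v)$ is non-increasing in $x_1$, produces the weak differential inequality $\Delta v_{x_1}+\check\lambda_1\Psi(v)v_{x_1}\geq 0$, and the strong maximum principle (since $v_{x_1}\leq 0$) gives either $v_{x_1}\equiv 0$ or $v_{x_1}<0$. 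In the strict case, the equality of Dirichlet integrals combined with the directional P\'olya--Szeg\H{o} inequality \eqref{PSB} and a Fubini/continuity argument localizes the equality to every one-dimensional fiber; the Berestycki--Lachand-Robert equality case in dimension one then gives, for each $x'\in\omega$, either $u_{\check m}(\cdot,x')=u^\star_{\check m}(\cdot,x')$ or $u_{\check m}(\cdot,x')=(u_{\check m})_\star(\cdot,x')$, and the continuity of $(u_{\check m})_{x_1}$ together with the connectedness of $\Omega$ glues the fiberwise alternative into the global dichotomy --- which is the monotonicity claimed in Theorem \ref{steiner}.
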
          

Monotonicity results of this kind have been studied both theoretically and numerically by a 
number of authors. 
Theorem \ref{steiner} in the one dimensional case has been proved in \cite{CC91, LY}
in the case $m_0$ is a ``bang-bang'' function and in \cite{JP} for general $m_0$.      
In general dimension, when the domain $\Omega$ is an orthotope and $m_0$ is of ``bang-bang'' type,
Lamboley et al. in \cite{LLNP} show  
that any minimizer is monotonic with respect to every coordinate direction.
Theorem \ref{steiner} contains all previous results and it is coherent with numerical simulations
in \cite{RH, KLY} for rectangles and ``bang-bang'' weights.  \\ 
It is worth mentioning that  in the case \eqref{p0} is considered with Dirichlet boundary conditions, the 
monotonicity of minimizers is replaced by the Steiner symmetry of them (see \cite{CC91, AC}). 
Nevertheless, in both situations these qualitative properties of the minimizers lead to an
arrangement of the favourable resources fragmented as little as possible.  Indeed, in the 
Dirichlet case they are concentrated far from the boundary, while in the Neumann
case they meet the boundary.\\  
As final remark, we observe that  problem \eqref{p0}, with Dirichlet boundary conditions in place of
Neumann and in the case of positive weight $m(x)$,   
also has a well known physical interpretation: it models the normal modes of vibration of a membrane
$\Omega$ with clamped boundary $\partial\Omega$ and mass
density $m(x)$; $\lambda_1(m)$ represents the principal natural frequency of the membrane. 
Therefore, physically, minimizing $\lambda_1(m)$ means to find the mass distribution of the 
membrane which gives the lowest principal natural frequency. 
Usually, the composite membrane is built using only two homogeneous materials of 
different densities  and, then, the weights in the optimization problem take only two positive 
values. Among many papers that consider the optimization of the principal natural frequency, we recall 
\cite{CGIKO,CML1,CML2}. \\
This paper is structured as follows. In Section \ref{preliminaries} we set up the functional framework 
and some tools in order to investigate the spectrum of problem \eqref{p0}.
In Section \ref{rearrangements} we collect some known results about rearrangements of 
measurable functions we will need in the sequel to examine
the optimization problem \eqref{infclos0}.  
In Section \ref{existence} we study the dependence of $\lambda_k(m)$ on $m$, in particular
continuity and, for $k=1$, convexity and differentiability properties; then, we prove 
 Theorem \ref{exist}.        
 Finally, in Section \ref{monotonicity} we give the proof of Theorem \ref{steiner}.

\section{Notations, preliminaries and weak formulation of \eqref{p0}}\label{preliminaries}   
Let $\Omega \subset \mathbb{R}^N$, $N\geq 1$, be a bounded connected open set with Lipschitz boundary 
$\partial \Omega$.\\
In this paper, we denote by $|E|$ the measure of an arbitrary Lebesgue measurable set $E \subset \mathbb{R}^N$ and by
$L^\infty(\Omega)$, $L^2(\Omega)$ and $H^1(\Omega)$ the usual Lebesgue and Sobolev spaces. The usual norms and scalar 
products of these spaces are denote by
\begin{equation*}
\|u\|_{L^\infty(\Omega)}=\esssup_{\Omega} |u| \quad \forall\, u\in L^\infty(\Omega),       
\end{equation*}    
\begin{equation*}
\langle u,v \rangle_{L^2(\Omega)}=\int_\Omega uv \, dx \quad \forall\, u, v\in L^2(\Omega)
\end{equation*}     
\begin{equation*}
\|u\|_{L^2(\Omega)}=\langle u,u \rangle^{1/2}_{L^2(\Omega)} \quad \forall\, u\in L^2(\Omega),        
\end{equation*}  
\begin{equation*}
\langle u,v \rangle_{H^1(\Omega)}=\int_{\Omega}uv\, dx +\int_\Omega \nabla u \cdot \nabla v \, dx 
\quad \forall\, u, v\in H^1(\Omega),
\end{equation*}
\begin{equation}\label{normah1}
\|u\|_{H^1(\Omega)}=\langle u,u \rangle_{H^1(\Omega)}^{1/2} \quad \forall\, u\in H^1(\Omega).
\end{equation}  
Moreover, we also use the notation $\langle \nabla u,\nabla v \rangle_{L^2(\Omega)}=\int_\Omega \nabla u\cdot \!\nabla v \, dx$
for all $u, v\in H^1(\Omega)$ and
by weak* convergence we always mean the weak* convergence in $L^\infty(\Omega)$.\\  
Given $m\in L^2(\Omega)$ such that $m \neq 0$,  we define the spaces
$$L^2_m(\Omega) = \left\{ f\in L^2(\Omega): \int_\Omega m f\, dx =0\right\}\quad
\text{and}\quad V_m(\Omega) = H^1(\Omega)\cap L^2_m(\Omega).$$
$L^2_m(\Omega)$ and $V_m(\Omega)$ are separable Hilbert subspaces of $L^2(\Omega)$ and 
$H^1(\Omega)$ respectively.\\ 
      
\subsection{The projection $P_m$ and norm in $V_m(\Omega)$}      
   
In this subsection we introduce a fundamental tool in order to develop our theory: a projection from $L^2(\Omega)$ to $L^2_m(\Omega)$ (which must not be confused with the usual orthogonal projection in Hilbert spaces).
      
\begin{definition}\label{def1}
Let $m\in L^2(\Omega)$ such that $\int_\Omega m\, dx \neq 0$. We call 
$\it{projection}$ $P_m$ the operator
\begin{equation*} 
 P_m: L^2(\Omega) \to L^2_m(\Omega),\qquad  f\mapsto f- \frac{\int_\Omega mf \, dx}{\int_\Omega m\, dx}\,.
\end{equation*} 
\end{definition}

Note that $P_m(H^1(\Omega))\subset V_m(\Omega)$. Indeed, depending on the case (which will be clear from the context), it might be more convenient to consider the projection $P_m: H^1(\Omega) \to V_m(\Omega)$.   
Since in our work $m(x)$ represents the local growth rate, which is a bounded function, hereafter we consider $m\in L^\infty(\Omega)$. Nevertheless, Proposition \ref{prop1}, Proposition  \ref{prop2} and Proposition \ref{normav} can also be stated for $m\in L^2(\Omega)$.

\begin{proposition}\label{prop1}
Let $m,q\in L^\infty(\Omega)$ such that $\int_\Omega m\, dx, \int_\Omega q\, dx \neq 0$ and $P_m$ the projection of   
Definition \ref{def1}.   Then \\ 
i) $\langle mP_m(f), \varphi\rangle_{L^2(\Omega)}= \langle m f, P_m(\varphi)\rangle_{L^2(\Omega)}$  
for all  $f,\varphi\in L^2(\Omega)$;\\
ii)  $P_m(f)=0$ if and only if $f$ is constant;\\   
iii) $P_m(f)=f$ for all $f\in L_m^2(\Omega)$;\\      
iv) $\nabla P_m(f)=\nabla f$ for all $f\in H^1(\Omega)$;\\  
v) $P_m$ is a linear bounded operator with  
\begin{equation}\label{stima1}
\|P_m\|_{\mathcal{L}(L^2(\Omega),L^2(\Omega)) }\leq 1+ \frac{\|m\|_{L^\infty(\Omega)}}{\left|\int_\Omega m\, dx \right|}\, |\Omega|
\end{equation}
and
\begin{equation}\label{stima2}
\|P_m\|_{\mathcal{L}(H^1(\Omega),H^1(\Omega))}\leq 1+ \frac{\|m\|_{L^\infty(\Omega)}}{\left|
\int_\Omega m\, dx \right|}\, |\Omega|;
\end{equation}  
vi) the compositions $P_q\circ P_m: L^2_q(\Omega) \to L^2_q(\Omega)$,  $P_q\circ P_m: 
V_q(\Omega) \to V_q(\Omega)$ are identities;\\  
vii) $ P_m: L^2_q(\Omega) \to L^2_m(\Omega)$,  $P_m: V_q(\Omega) \to V_m(\Omega)$ are 
isomorphisms. 
\end{proposition}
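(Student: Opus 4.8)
The plan is to verify all seven statements by direct computation, exploiting the single unifying observation that for every $f\in L^2(\Omega)$ the function $P_m(f)$ differs from $f$ only by the additive scalar $c_f:=\frac{\int_\Omega mf\,dx}{\int_\Omega m\,dx}$, i.e. $P_m(f)=f-c_f$. From this, items (ii), (iii) and (iv) are immediate. A constant $f\equiv c$ gives $c_f=c$, hence $P_m(f)=0$; conversely $P_m(f)=0$ forces $f=c_f$ to be constant, which proves (ii). If $\int_\Omega mf\,dx=0$ then $c_f=0$ and $P_m(f)=f$, proving (iii). And since $c_f$ is a constant, $\nabla P_m(f)=\nabla f$, proving (iv).

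For (i) I would simply expand both inner products. Writing $\langle mP_m(f),\varphi\rangle_{L^2(\Omega)}$ and $\langle mf,P_m(\varphi)\rangle_{L^2(\Omega)}$ out from the definition, both reduce to the manifestly symmetric expression $\int_\Omega mf\varphi\,dx-\frac{(\int_\Omega mf\,dx)(\int_\Omega m\varphi\,dx)}{\int_\Omega m\,dx}$, so they coincide.

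Linearity in (v) is clear from the formula. For the $L^2$ bound I would estimate the scalar $c_f$ by Cauchy--Schwarz, using $|\int_\Omega mf\,dx|\le \|m\|_{L^\infty(\Omega)}|\Omega|^{1/2}\|f\|_{L^2(\Omega)}$ together with $\|1\|_{L^2(\Omega)}=|\Omega|^{1/2}$, which gives $\|c_f\|_{L^2(\Omega)}\le \frac{\|m\|_{L^\infty(\Omega)}}{|\int_\Omega m\,dx|}|\Omega|\,\|f\|_{L^2(\Omega)}$; the triangle inequality then yields \eqref{stima1}. The $H^1$ bound \eqref{stima2} follows from this and (iv): since $\nabla P_m(f)=\nabla f$, one has $\|P_m(f)\|_{H^1(\Omega)}^2=\|P_m(f)\|_{L^2(\Omega)}^2+\|\nabla f\|_{L^2(\Omega)}^2$, and because the constant in \eqref{stima1} is $\ge 1$, this is bounded by the square of that same constant times $\|f\|_{H^1(\Omega)}^2$.

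Finally, (vi) and (vii) are where the earlier parts combine. For (vi), take $f\in L^2_q(\Omega)$; then $P_m(f)=f-c_f$, and applying $P_q$ linearly gives $P_q(P_m(f))=P_q(f)-P_q(c_f)=f-0=f$ by (iii) and (ii), so $P_q\circ P_m$ is the identity on $L^2_q(\Omega)$, and the identical computation works verbatim on $V_q(\Omega)$. Then (vii) follows by symmetry: exchanging the roles of $m$ and $q$ in (vi) shows $P_m\circ P_q$ is the identity on $L^2_m(\Omega)$, so $P_m:L^2_q(\Omega)\to L^2_m(\Omega)$ and $P_q:L^2_m(\Omega)\to L^2_q(\Omega)$ are mutually inverse bounded linear maps, hence isomorphisms, and likewise on the $V$-spaces. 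None of the steps presents a genuine obstacle; the only points needing any care are the correct application of Cauchy--Schwarz in (v) and the bookkeeping in (vi)--(vii), namely that every constant lies in the kernel of each projection.
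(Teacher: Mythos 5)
Your proof is correct and takes essentially the same route as the paper: every item is verified directly from the identity $P_m(f)=f-c_f$ with $c_f=\int_\Omega mf\,dx\big/\int_\Omega m\,dx$, yielding the same constant $1+\|m\|_{L^\infty(\Omega)}|\Omega|/\left|\int_\Omega m\,dx\right|$ in (v) and the same reduction of (vii) to (vi). The only cosmetic differences are that you obtain \eqref{stima1} via the triangle inequality applied to the constant $c_f$ rather than the paper's expansion of $\|P_m(f)\|_{L^2(\Omega)}^2$, that in (vi) you argue by linearity together with the facts that $P_q$ annihilates constants and fixes $L^2_q(\Omega)$ instead of the paper's explicit substitution, and that you make explicit the mutual-inverse bookkeeping for (vii) which the paper compresses into ``it immediately follows from vi)''.
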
 
 
\begin{proof}      
i), ii), iii) and iv) are immediate consequences of the definition of the projection $P_m$.\\
v) By the definition of $P_m$ and straightforward calculations we find          
 \begin{equation*}                          
 \begin{split}  
 \|P_m(f)\|^2_{L^2(\Omega)} &=
 \bigintsss_\Omega\left[f^2-2\,\frac{\int_\Omega mf\;dx}{\int_\Omega m\;dx}f+\left(\frac{\int_\Omega mf\;dx}{\int_\Omega m\;dx}\right)^2 \right]\;dx\\
 &= \|f\|_{L^2(\Omega)}^2-2\,\frac{\int_\Omega mf\;dx}{\int_\Omega m\;dx}\int_\Omega f\;dx+\left(\frac{\int_\Omega mf\;dx}{\int_\Omega m\;dx}\right)^2|\Omega|\\
&\leq  \|f\|_{L^2(\Omega)}^2+2\,\frac{\|m\|_{L^2(\Omega)}\|f\|^2_{L^2(\Omega)}}{\left|\int_\Omega m\;dx
\right|}|\Omega|^{1/2}+\,\frac{\|m\|^2_{L^2(\Omega)}\|f\|^2_{L^2(\Omega)}}{\left|\int_\Omega m\;dx
\right|^2}|\Omega|\\
&=\left(\|f\|_{L^2(\Omega)}+\,\frac{\|m\|_{L^2(\Omega)}\|f\|_{L^2(\Omega)}}{\left|\int_\Omega m\;dx\right|}|\Omega|^{1/2}\right)^2\\
&\leq \left(\|f\|_{L^2(\Omega)}+\,\frac{\|m\|_{L^\infty(\Omega)}\|f\|_{L^2(\Omega)}}{\left|\int_\Omega m\;dx\right|}|\Omega|\right)^2\\
& = \left(1+\,\frac{\|m\|_{L^\infty(\Omega)}}{\left|\int_\Omega m\;dx\right|}|\Omega|\right)^2\|f\|^2_{L^2(\Omega)};\\
\end{split}
\end{equation*}        
then \eqref{stima1} holds. The estimate \eqref{stima2} follows from \eqref{stima1} and iv).\\ 
vi) Let $f\in  L^2_q(\Omega)$;  recalling that $\int_\Omega qf\, dx = 0$, we have
$$P_q (P_m(f)) = P_q \left(f- \frac{\int_\Omega mf \, dx}{\int_\Omega m\, dx} \right)
= f- \frac{\int_\Omega mf \, dx}{\int_\Omega m\, dx}\, - 
\frac{ \int_\Omega qf\,dx- \frac{\int_\Omega mf \, dx}{\int_\Omega m\, dx}\int_{\Omega}q\, dx}{\int_\Omega q\,dx} =f;$$
the second statement immediately follows from the first one.\\  
vii) It immediately follows from vi).  
\end{proof}

For the sake of convenience, we put
\begin{equation}\label{c1} 
 C_1(m)=1+ \frac{\|m\|_{L^\infty(\Omega)}}{\left|\int_\Omega m\, dx \right|}\, |\Omega|.
\end{equation} 

The previous proposition leads us to an alternative norm in the space $V_m(\Omega)$.

\begin{proposition}\label{prop2}
Let $m\in L^\infty(\Omega)$ such that $\int_\Omega m\, dx\neq 0$. Then, for all $u\in V_m(\Omega)$ we have
\begin{equation}\label{stima3}
 \|u\|_{L^2(\Omega)}\leq C\left(1+ \frac{\|m\|_{L^\infty(\Omega)}}{\left|\int_\Omega m\, dx \right|}\,
 |\Omega|\right)\|\nabla u\|_{L^2(\Omega)},
\end{equation}
where $C$ is the constant of the Poincar\'e-Wirtinger's inequality (see \cite[Theorem 12.23]{L}).    
\end{proposition}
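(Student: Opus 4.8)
The plan is to reduce \eqref{stima3} to the classical Poincar\'e--Wirtinger inequality by exploiting the projection $P_m$ and its operator bound \eqref{stima1}. I would write $\bar u=\frac{1}{|\Omega|}\int_\Omega u\,dx$ for the mean of $u$, so that Poincar\'e--Wirtinger directly gives $\|u-\bar u\|_{L^2(\Omega)}\leq C\|\nabla u\|_{L^2(\Omega)}$. The only remaining task is to control the full norm $\|u\|_{L^2(\Omega)}$ in terms of the mean-zero part $u-\bar u$, and for this I would use the defining constraint $\int_\Omega mu\,dx=0$ that comes from membership in $V_m(\Omega)\subset L^2_m(\Omega)$.

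The key observation I would establish is the identity $u=P_m(u-\bar u)$. Indeed, by Definition \ref{def1},
\[
P_m(u-\bar u)=(u-\bar u)-\frac{\int_\Omega m(u-\bar u)\,dx}{\int_\Omega m\,dx},
\]
and since $\int_\Omega mu\,dx=0$ one has $\int_\Omega m(u-\bar u)\,dx=-\bar u\int_\Omega m\,dx$; substituting, the subtracted term equals $\bar u$, and the right-hand side collapses to $u$. In other words, the constraint that defines $L^2_m(\Omega)$ is precisely what makes $u$ the image under $P_m$ of the mean-zero function $u-\bar u$.

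With this identity in hand, the estimate follows by chaining the two bounds. Applying the operator-norm estimate \eqref{stima1}, equivalently the constant $C_1(m)$ of \eqref{c1}, gives $\|u\|_{L^2(\Omega)}=\|P_m(u-\bar u)\|_{L^2(\Omega)}\leq C_1(m)\|u-\bar u\|_{L^2(\Omega)}$, and then Poincar\'e--Wirtinger bounds $\|u-\bar u\|_{L^2(\Omega)}$ by $C\|\nabla u\|_{L^2(\Omega)}$, yielding exactly \eqref{stima3} with constant $C\,C_1(m)$.

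I expect no serious obstacle here: the substance of the argument is the single algebraic identity $u=P_m(u-\bar u)$, which converts the abstract constraint $\int_\Omega mu\,dx=0$ into a concrete application of the already-proved projection bound. The only point requiring care is recognizing that the mean-zero shift $u-\bar u$ is the correct argument to feed into $P_m$, rather than $u$ itself (on which $P_m$ acts trivially and yields no information). An alternative, fully elementary route avoiding $P_m$ would be to estimate the mean directly: from $|\bar u|\,\left|\int_\Omega m\,dx\right|=\left|\int_\Omega m(u-\bar u)\,dx\right|\leq\|m\|_{L^\infty(\Omega)}|\Omega|^{1/2}\|u-\bar u\|_{L^2(\Omega)}$ by Cauchy--Schwarz, and then $\|u\|_{L^2(\Omega)}\leq\|u-\bar u\|_{L^2(\Omega)}+|\bar u|\,|\Omega|^{1/2}$; this reproduces the same constant but is slightly longer.
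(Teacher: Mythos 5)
Your proof is correct and coincides in substance with the paper's: your identity $u=P_m(u-\bar u)$, where $\bar u=\frac{1}{|\Omega|}\int_\Omega u\,dx$, is exactly the paper's $u=P_m(P_1(u))$ (part vi) of Proposition \ref{prop1} with $q=1$, which you merely re-derive by direct computation instead of citing), and the rest is the same chaining of the operator bound \eqref{stima1} with the Poincar\'e--Wirtinger inequality, producing the same constant $C\cdot C_1(m)$.
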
        
\begin{proof} Let $u\in V_m(\Omega)$.  By vi) of Proposition \ref{prop1} we have $u=P_m(P_1(u))$. 
By \eqref{stima1} and the Poincar\'e-Wirtinger's inequality we find
\begin{equation*}
 \begin{split}   
\|u\|_{L^2(\Omega)}=\|P_m(P_1(u))\|_{L^2(\Omega)}&\leq C_1(m)\|P_1(u)\|_{L^2(\Omega)}\\
&\leq C_1(m)\left \|u - \frac{1}{|\Omega|} \int_\Omega u\, dx\right\|_{L^2(\Omega)}\leq C_1(m)\cdot C \|\nabla u \|_{L^2(\Omega)},
\end{split}
\end{equation*}
which proves the statement.
\end{proof}

\begin{proposition}\label{normav}    
Let $m\in L^\infty(\Omega)$ such that $\int_\Omega m\, dx\neq 0$. Then, the bilinear form in $V_m(\Omega)$
\begin{equation}\label{prodscalvm} \langle u,v \rangle_{V_m(\Omega)}=\int_\Omega \nabla u \cdot \nabla v\, dx \quad \forall\, u, v\in V_m(\Omega)   
\end{equation}
is a scalar product which induces a norm equivalent to the usual
 norm \eqref{normah1}.
We denote by $\|u\|_{V_m(\Omega)}$ the associated norm to \eqref{prodscalvm}.
\end{proposition}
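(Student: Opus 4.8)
The plan is to proceed in two stages: first to check that \eqref{prodscalvm} is genuinely a scalar product on $V_m(\Omega)$, and then to establish the norm equivalence by combining the trivial inequality with the bound already obtained in Proposition \ref{prop2}.

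For the first stage, bilinearity and symmetry of $\langle \cdot,\cdot\rangle_{V_m(\Omega)}$ are immediate from the linearity of the integral and the symmetry of the dot product, and $\langle u,u\rangle_{V_m(\Omega)}=\int_\Omega |\nabla u|^2\,dx\geq 0$ for every $u$. The only point that requires an argument is positive definiteness, i.e. that $\langle u,u\rangle_{V_m(\Omega)}=0$ forces $u=0$. If $\int_\Omega |\nabla u|^2\,dx=0$ then $\nabla u=0$ a.e., and since $\Omega$ is connected this implies that $u$ equals a constant $c$ a.e. But $u\in V_m(\Omega)\subset L^2_m(\Omega)$ means $\int_\Omega m u\,dx=0$, hence $c\int_\Omega m\,dx=0$; because $\int_\Omega m\,dx\neq 0$ by hypothesis, we conclude $c=0$, so $u=0$. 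This is precisely where the defining constraint of $V_m(\Omega)$ and the assumption $\int_\Omega m\,dx\neq 0$ are both essential: together they rule out the nonzero constants that would otherwise lie in the kernel of $u\mapsto \|\nabla u\|_{L^2(\Omega)}$.

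For the norm equivalence I would write $\|u\|_{V_m(\Omega)}=\|\nabla u\|_{L^2(\Omega)}$. One inequality is immediate: since $\|u\|_{H^1(\Omega)}^2=\|u\|_{L^2(\Omega)}^2+\|\nabla u\|_{L^2(\Omega)}^2$, we have $\|u\|_{V_m(\Omega)}\leq \|u\|_{H^1(\Omega)}$ for every $u\in V_m(\Omega)$. For the reverse inequality I would invoke Proposition \ref{prop2}: for $u\in V_m(\Omega)$, estimate \eqref{stima3} gives $\|u\|_{L^2(\Omega)}\leq C\,C_1(m)\,\|\nabla u\|_{L^2(\Omega)}$, whence $\|u\|_{H^1(\Omega)}^2\leq \bigl(1+(C\,C_1(m))^2\bigr)\|\nabla u\|_{L^2(\Omega)}^2$, that is $\|u\|_{H^1(\Omega)}\leq \sqrt{1+(C\,C_1(m))^2}\,\|u\|_{V_m(\Omega)}$. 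The two inequalities together show that $\|\cdot\|_{V_m(\Omega)}$ and $\|\cdot\|_{H^1(\Omega)}$ are equivalent on $V_m(\Omega)$.

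Since the heavy lifting, namely the Poincar\'e--Wirtinger type estimate restricted to $V_m(\Omega)$, has already been carried out in Proposition \ref{prop2}, I do not expect any genuine obstacle here. The only step that calls for care is the positive-definiteness argument, where connectedness of $\Omega$ and the condition $\int_\Omega m\,dx\neq 0$ must both be used in order to exclude the nonzero constant functions from the kernel of the seminorm $u\mapsto\|\nabla u\|_{L^2(\Omega)}$.
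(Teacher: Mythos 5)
Your proposal is correct and follows essentially the same route as the paper: the trivial bound $\|u\|_{V_m(\Omega)}\leq\|u\|_{H^1(\Omega)}$ combined with estimate \eqref{stima3} from Proposition \ref{prop2} to get $\|u\|_{H^1(\Omega)}\leq (C^2\cdot C_1^2(m)+1)^{1/2}\|u\|_{V_m(\Omega)}$. Your explicit positive-definiteness argument (connectedness of $\Omega$ plus $\int_\Omega m\,dx\neq 0$ excluding nonzero constants) is a sound addition that the paper leaves implicit, since definiteness already follows from \eqref{stimanorme2}: $\|u\|_{V_m(\Omega)}=0$ forces $\|u\|_{H^1(\Omega)}=0$.
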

\begin{proof}
Comparing $\|u\|_{V_m(\Omega)}$ with \eqref{normah1}, we have  
\begin{equation}\label{stimanorme1}
\|u\|_{V_m(\Omega)}\leq \|u\|_{H^1(\Omega)}.     
\end{equation}
By \eqref{stima3} and \eqref{c1}, we find 
\begin{equation}\label{stimanorme2}
\|u\|_{H^1(\Omega)}\leq (C^2\cdot C_1^2(m)+1)^{1/2}\|u\|_{V_m(\Omega)}.          
\end{equation}   
By \eqref{stimanorme1} and \eqref{stimanorme2}, the thesis immediately follows.   
\end{proof}

If not stated otherwise, we will consider $V_m$ endowed with the norm just introduced.

\subsection{The operators $E_m$ and $G_m$}\label{operators}

We study the eigenvalues of problem \eqref{p0} by means of the spectrum of an operator that we will introduce in this subsection.\\
Let $m\in L^\infty(\Omega)$ such that $\int_\Omega m\, dx\neq 0$. For every $f\in L^2(\Omega)$ let us consider the following continuous linear functional on $V_m(\Omega)$ 
\begin{equation*} 
\varphi\mapsto
 \langle m f,\varphi\rangle_{L^2(\Omega)} \quad\forall\, \varphi\in V_m(\Omega).
\end{equation*}
By the Riesz Theorem, 
there exists a unique $u\in V_m(\Omega)$ such that 
\begin{equation}\label{volpe}
\langle  u, \varphi\rangle_{V_m(\Omega)}
= \langle m f,\varphi\rangle_{L^2(\Omega)} \quad\forall\, \varphi\in V_m(\Omega)
\end{equation}
holds.\\
Let us introduce the operator
\begin{equation}\label{Em} 
 E_m:L^2(\Omega)\to V_m(\Omega),
\end{equation} 
where $u=E_m(f)$ is the unique function in $V_m(\Omega)$ that satisfies \eqref{volpe}, i.e.  for
all $f\in L^2(\Omega)$, $E_m(f)$ is defined by 
\begin{equation}\label{Emf}
\langle E_m(f) , \varphi\rangle_{V_m(\Omega)}
= \langle m f,\varphi\rangle_{L^2(\Omega)} \quad\forall\, \varphi\in V_m(\Omega).
\end{equation}
$E_m$ is clearly linear. Putting $\varphi=u$ in \eqref{volpe} and exploiting \eqref{stima3} and \eqref{c1},  we find
\begin{equation}\label{normauvm}
\|u\|_{V_m(\Omega)}\leq C\cdot C_1(m)\|m\|_{L^\infty(\Omega)}\|f\|_{L^2(\Omega)}.   
\end{equation} 
Therefore $E_m$ is a linear bounded operator such that
\begin{equation}\label{normaEm}
\|E_m\|_{\mathcal{L}(L^2(\Omega), V_m(\Omega))}\leq C\cdot C_1(m) \|m\|_{L^\infty(\Omega)}.
\end{equation} 
Let $i_m$ be the inclusion of $V_m(\Omega)$  into $L^2(\Omega)$. Note that, by compactness of the inclusion $H^1(\Omega) \hookrightarrow L^2(\Omega)  $ (see \cite{L}), it follows that $i_m$ is a compact operator as well. Moreover, 
we define a second the linear operator      
\begin{equation}\label{Gm}
G_m:V_m(\Omega)\to V_m(\Omega)
\end{equation} 
by $G_m=E_m\circ i_m$, i.e.  for all $f\in V_m(\Omega)$, $G_m(f)$ is defined by 
\begin{equation}\label{Gmf} 
\langle G_m(f) , \varphi\rangle_{V_m(\Omega)}
= \langle m f,\varphi\rangle_{L^2(\Omega)} \quad\forall\, \varphi\in V_m(\Omega).
\end{equation}        

The main properties of the operators $E_m$ and $G_m$ are summarized in the following 
Proposition.

\begin{proposition}\label{proprietaop} 
Let $m\in L^\infty(\Omega)$ such that $\int_\Omega m\, dx\neq 0$ and $P_m, E_m$ and $G_m$
defined by Definition \ref{def1}, \eqref{Emf} and \eqref{Gmf} respectively. Then\\
i) $E_m(f)= G_m( P_m(f))$ for all $f\in H^1(\Omega)$;\\
ii) $G_m$ is self-adjoint and compact;\\
iii) $E_m$ restricted to $H^1(\Omega)$ is compact.
\end{proposition}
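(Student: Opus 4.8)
The plan is to establish the three claims in the stated order, using i) as the bridge connecting the compactness of $G_m$ in ii) to that of $E_m|_{H^1(\Omega)}$ in iii). For i), I would compare both sides through their defining variational characterizations on $V_m(\Omega)$. Since $P_m(H^1(\Omega))\subset V_m(\Omega)$, the term $G_m(P_m(f))$ is well defined, and by \eqref{Gmf} it satisfies $\langle G_m(P_m(f)),\varphi\rangle_{V_m(\Omega)}=\langle mP_m(f),\varphi\rangle_{L^2(\Omega)}$ for every $\varphi\in V_m(\Omega)$. The key observation is that each test function $\varphi\in V_m(\Omega)$ lies in $L^2_m(\Omega)$, so $\int_\Omega m\varphi\,dx=0$; writing $P_m(f)=f-c$ with $c=\left(\int_\Omega mf\,dx\right)/\left(\int_\Omega m\,dx\right)$ constant, the constant term contributes $-c\int_\Omega m\varphi\,dx=0$, whence $\langle mP_m(f),\varphi\rangle_{L^2(\Omega)}=\langle mf,\varphi\rangle_{L^2(\Omega)}$. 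Comparing with the definition \eqref{Emf} of $E_m(f)$ and using uniqueness in the Riesz representation yields $E_m(f)=G_m(P_m(f))$. Alternatively, parts i) and iii) of Proposition \ref{prop1} give the same identity immediately.

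For ii), I would obtain self-adjointness by symmetry: testing \eqref{Gmf} gives, for $f,g\in V_m(\Omega)$, $\langle G_m(f),g\rangle_{V_m(\Omega)}=\langle mf,g\rangle_{L^2(\Omega)}=\int_\Omega mfg\,dx$, and the last expression is symmetric in $f$ and $g$, so $\langle G_m(f),g\rangle_{V_m(\Omega)}=\langle f,G_m(g)\rangle_{V_m(\Omega)}$. For compactness I would exploit the factorization $G_m=E_m\circ i_m$: the inclusion $i_m\colon V_m(\Omega)\hookrightarrow L^2(\Omega)$ is compact, as already remarked, while $E_m\colon L^2(\Omega)\to V_m(\Omega)$ is bounded by \eqref{normaEm}; since composing a compact operator with a bounded one yields a compact operator, $G_m$ is compact.

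For iii), I would combine i) and ii). The identity in i) reads $E_m|_{H^1(\Omega)}=G_m\circ P_m$, where $P_m\colon H^1(\Omega)\to V_m(\Omega)$ is bounded by \eqref{stima2} and $G_m$ is compact by ii); the composition of a bounded operator followed by a compact one is compact, so $E_m$ restricted to $H^1(\Omega)$ is compact. I do not expect a genuine obstacle here: the arguments are short applications of the variational definitions and of the ideal property of compact operators. The only point demanding care is the bookkeeping of domains and codomains---in particular, consistently using $V_m(\Omega)\subset L^2_m(\Omega)$ so that test functions annihilate $m$ against constants, and composing a bounded and a compact operator in the correct order in ii) and iii).
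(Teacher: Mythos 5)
Your proposal is correct and follows essentially the same route as the paper: part i) via the vanishing of the constant part of $P_m(f)$ against test functions in $V_m(\Omega)$ (which is exactly what i) and iii) of Proposition \ref{prop1} encode), part ii) via the symmetry of $\langle mf,g\rangle_{L^2(\Omega)}$ and the factorization $G_m=E_m\circ i_m$ with $i_m$ compact and $E_m$ bounded, and part iii) as the composition $G_m\circ P_m$ of a bounded and a compact operator, which is precisely how the paper's terse ``follows from i) and ii)'' is meant to be read. Your added care about domains and codomains (e.g.\ that $P_m\colon H^1(\Omega)\to V_m(\Omega)$ is bounded thanks to \eqref{stima2} together with the norm equivalence of Proposition \ref{normav}) is sound and needs no change.
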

\begin{proof}
 i) Let $f\in H^1(\Omega)$, then $P_m(f)\in V_m(\Omega)$. By \eqref{Gmf}, i) and iii) of Proposition 
 \ref{prop1} we have
 \begin{equation*}
\langle G_m(P_m(f)), \varphi\rangle_{V_m(\Omega)} 
= \langle mP_m(f), \varphi  
\rangle_{L^2(\Omega)}= 
\langle mf, P_m(\varphi)\rangle_{L^2(\Omega)}=\langle mf, \varphi\rangle_{L^2(\Omega)}\quad\forall\,\varphi\in 
V_m(\Omega).
\end{equation*}  
Thus, by \eqref{Emf},  $E_m(f)= G_m( P_m(f))$.\\   
ii)
For all $f, g\in V_m(\Omega)$, by \eqref{Gmf}, we have
\begin{equation*}
\langle G_m(f), g\rangle_{V_m(\Omega)} =
\langle m f, g\rangle_{L^2(\Omega)}
= \langle m g, f\rangle_{L^2(\Omega)}
=\langle G_m (g), f\rangle_{V_m(\Omega)},
 \end{equation*} 
then $G_m$ is self-adjoint. The compactness of the operator $G_m$  is an immediate consequence 
of its definition $G_m=E_m \circ i_m $, the inclusion $i_m$ being compact
and the operator $E_m$ continuous.\\
iii) It follows from i) and ii).
\end{proof} 
 
By the general theory of self-adjoint compact operators (see \cite{DF,Lax}),   
it follows that all nonzero eigenvalues of $G_m$ have a finite dimensional eigenspace 
and they can be obtained by the {\it Fischer's Principle}     
\begin{equation}\label{Fischer1} \mu_k(m) =  \sup_{F_k\subset V_m(\Omega)}
\inf_{f\in F_k\atop f\neq 0}
\cfrac{\langle G_m (f), f \rangle_{V_m(\Omega)} }{\|f\|^2_{V_m(\Omega)}} \,= \sup_{F_k\subset V_m(\Omega)}
\inf_{f\in F_k\atop f\neq 0}
\cfrac{\int_\Omega m f^2 \, dx }{\int_\Omega |\nabla f|^2\, dx} \,, \quad k=1, 2, 3, \ldots\end{equation}  
and
\begin{equation*}      
\mu_{-k}(m) =  \inf_{F_k\subset V_m(\Omega)}
\sup_{f\in F_k\atop f\neq 0}
\cfrac{\langle G_m (f), f \rangle_{V_m(\Omega)} }{\|f\|^2_{V_m(\Omega)}} \,=
\inf_{F_k\subset V_m(\Omega)}
\sup_{f\in F_k\atop f\neq 0}
\cfrac{\int_\Omega m f^2 \, dx }{\|f\|^2_{V_m(\Omega)}}\,,\quad k=1, 2, 3,  \ldots,
\end{equation*}  
where the first extrema are taken over all the subspaces $F_k$ of $V_m(\Omega)$ of dimension
$k$. As observed in \cite{DF}, all the inf's and sup's in the above characterizations of the eigenvalues are actually
assumed. Hence, they could be replaced by min's and max's and the eigenvalues are obtained 
exactly in correspondence of the associated eigenfunctions.
The sequence $\{\mu_k(m)\}$ contains all the real positive eigenvalues (repeated with their multiplicity), is decreasing and converging to zero, whereas $\{\mu_{-k}(m)\}$ is formed by all the real negative
eigenvalues (repeated with their multiplicity), is increasing and converging to zero.\\ 

We will write $\{m>0\}$ as a short form of $\{x\in \Omega: m(x)>0\}$ and similarly 
$\{m<0\}$ for $\{x\in \Omega: m(x)<0\}$. The following proposition
is analogous to \cite[Proposition 1.11]{DF}.

\begin{proposition}\label{segnorho}Let $m \in L^\infty(\Omega)$ and $G_m$ be the operator 
\eqref{Gmf}. 
Then, the following statements hold\\   
i) if $|\{m>0\}|=0$, then there are no positive eigenvalues;\\
ii) if $|\{m>0\}|>0$ and $\int_\Omega m\, dx<0$, then there is a sequence of positive 
eigenvalues $\mu_k(m)$;\\
iii) if $|\{m<0\}|=0$, then there are no negative eigenvalues;\\
iv) if $|\{m<0\}|>0$ and $\int_\Omega m\, dx>0$, then there is a sequence of negative 
eigenvalues $\mu_{-k}(m)$.
\end{proposition}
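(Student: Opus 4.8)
I would first record the reductions, since the four statements split into an easy pair and a substantive pair that are dual under $m\mapsto -m$. For i), if $\mu>0$ were an eigenvalue of $G_m$ with eigenfunction $f\in V_m(\Omega)\setminus\{0\}$, then taking $\varphi=f$ in \eqref{Gmf} would give $\int_\Omega m f^2\,dx=\langle G_m(f),f\rangle_{V_m(\Omega)}=\mu\|f\|^2_{V_m(\Omega)}>0$, which is impossible when $|\{m>0\}|=0$, i.e. $m\le 0$ a.e. For iii) and iv) I would pass to $-m$: since $L^2_{-m}(\Omega)=L^2_m(\Omega)$ and the two spaces carry the identical scalar product \eqref{prodscalvm}, one has $V_{-m}(\Omega)=V_m(\Omega)$, and \eqref{Gmf} gives $G_{-m}=-G_m$; hence the negative eigenvalues of $G_m$ are exactly the negatives of the positive eigenvalues of $G_{-m}$. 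Thus iii) is i) applied to $-m$ (as $|\{-m>0\}|=|\{m<0\}|=0$), and iv) is ii) applied to $-m$ (as $\int_\Omega(-m)\,dx<0$ and $|\{-m>0\}|=|\{m<0\}|>0$). Everything therefore reduces to proving ii).

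For ii) I would use Fischer's Principle \eqref{Fischer1}: since the positive eigenvalues form a decreasing sequence, it suffices to show $\mu_k(m)>0$ for every $k$, and for this it is enough to exhibit, for each $k$, a $k$-dimensional subspace $F_k\subset V_m(\Omega)$ on which the quadratic form $Q(f)=\int_\Omega m f^2\,dx$ is positive. Indeed, on a finite-dimensional $F_k$ the quotient $\int_\Omega m f^2\,dx/\int_\Omega|\nabla f|^2\,dx$ is continuous and $0$-homogeneous, so it attains a positive minimum on $F_k\setminus\{0\}$ (the denominator never vanishes, because the only constant in $V_m(\Omega)$ is $0$), and \eqref{Fischer1} then forces $\mu_k(m)>0$.

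To build $F_k$ I would start in $L^2(\Omega)$. Since $\{m>0\}=\bigcup_n\{m>1/n\}$ has positive measure, I fix $\varepsilon_0>0$ with $|A|>0$, where $A=\{m>\varepsilon_0\}$, and split $A$ into $k$ pairwise disjoint subsets $A_1,\dots,A_k$ of positive measure. On $\mathrm{span}(\chi_{A_1},\dots,\chi_{A_k})$ one has $Q\bigl(\sum_i c_i\chi_{A_i}\bigr)=\sum_i c_i^2\int_{A_i}m\,dx\ge \varepsilon_0\sum_i c_i^2|A_i|>0$, so $Q$ is positive definite there. Because $Q$ is $L^2$-continuous and $H^1(\Omega)$ is dense in $L^2(\Omega)$, a sufficiently small $L^2$-perturbation $\phi_i\in H^1(\Omega)$ of each $\chi_{A_i}$ keeps the $\phi_i$ linearly independent and keeps $Q$ positive definite on $\widetilde F_k=\mathrm{span}(\phi_1,\dots,\phi_k)\subset H^1(\Omega)$.

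The remaining and most delicate point is to move from $H^1(\Omega)$ into the constraint space $V_m(\Omega)$ without destroying positivity, and this is exactly where the hypothesis $\int_\Omega m\,dx<0$ enters. I would apply the projection $P_m$ of Definition \ref{def1} and use the identity, valid for every $g\in H^1(\Omega)$,
\begin{equation*}
\int_\Omega m\,(P_m g)^2\,dx=\int_\Omega m g^2\,dx+\frac{\bigl(\int_\Omega m g\,dx\bigr)^2}{\bigl|\int_\Omega m\,dx\bigr|},
\end{equation*}
which follows by expanding $P_m g=g-\frac{\int_\Omega mg\,dx}{\int_\Omega m\,dx}$ and invoking $\int_\Omega m\,dx<0$. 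Hence $\int_\Omega m\,(P_m g)^2\,dx\ge\int_\Omega m g^2\,dx$, so $Q$ stays positive on $F_k:=P_m(\widetilde F_k)\subset V_m(\Omega)$. Finally $P_m$ is injective on $\widetilde F_k$: by ii) of Proposition \ref{prop1} its kernel is the constants, and $\widetilde F_k$ contains no nonzero constant since $Q$ is positive on it while $Q(c)=c^2\int_\Omega m\,dx<0$. Therefore $\dim F_k=k$, which completes the construction and the proof of ii). The main obstacle is precisely this last transfer into $V_m(\Omega)$: the sign condition $\int_\Omega m\,dx<0$ is what guarantees that subtracting the projection constant does not spoil, but in fact improves, the positivity of the form.
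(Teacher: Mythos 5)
Your proof is correct and is essentially the paper's own argument: both hinge on Fischer's Principle together with the identity $\int_\Omega m\,(P_m f)^2\,dx=\int_\Omega m f^2\,dx-\bigl(\int_\Omega m f\,dx\bigr)^2/\int_\Omega m\,dx$, whose correction term is nonnegative precisely because $\int_\Omega m\,dx<0$, applied to a $k$-dimensional family of functions concentrated on $\{m>0\}$ and transferred into $V_m(\Omega)$ by $P_m$, which is injective there since its kernel consists of the constants. The only variations are cosmetic: the paper starts from smooth bumps $f_i\in C_0^\infty(B_i)$ in disjoint balls meeting $\{m>0\}$, normalized by $\int_\Omega m f_i^2\,dx=1$, and thereby gets the quantitative bound $\mu_k(m)\geq 1/\|A_k\|$, whereas you use indicators of subsets of a superlevel set $\{m>\varepsilon_0\}$ followed by an $L^2$-small perturbation into $H^1(\Omega)$ and a compactness argument on the finite-dimensional sphere; and your reduction of iii) and iv) via $V_{-m}(\Omega)=V_m(\Omega)$ and $G_{-m}=-G_m$ is a clean formalization of the paper's remark that ``the cases iii) and iv) are similarly proved.''
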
 

\begin{proof}
i) Let $\mu$ be an eigenvalue and $u$ a corresponding eigenfunction. By \eqref{Gmf} with 
$f=\varphi=u$ we have    
$$\mu= \cfrac{\int_\Omega m u^2\, dx }{\|u\|^2_{V_m(\Omega)}}\, \leq 0.$$
ii) By measure theory covering theorems, for each positive integer $k$ there exist $k$ 
disjoint closed balls $B_1, \ldots, B_k$ in $\Omega$ such that $| B_i \cap \{m>0\}|>0$ 
for $i=1, \ldots, k$. Let $f_i\in C^\infty_0(B_i)$ such that 
$\int_\Omega m f_i^2 \, dx=1$ for every $i=1, \ldots, k$.
Note that the functions  $f_i$ are linearly independent. 
We put $g_i=P_m(f_i)$, where $P_m$ is defined in Definition \ref{def1};
$g_i\in V_m(\Omega)$ for all $i=1, \ldots, k$. 
We show that the functions $g_i$ are linearly independent as well.
Let $\alpha_1, \ldots , \alpha_k$ be constants such that
$\sum_{i=1}^k \alpha_i g_i=0$; this implies $P_m\left(\sum_{i=1}^k \alpha_i f_i\right)=0$,
i.e., by ii) of Proposition \ref{prop1}, $\sum_{i=1}^k \alpha_i f_i=c\in\mathbb{R}$.
Evaluating $\sum_{i=1}^k \alpha_i f_i$ in $\Omega\smallsetminus \cup_{i=1}^k B_i\neq 
\emptyset$ we find  $c=0$ and,  therefore,  $\alpha_i=0$ for  all $i=1, \ldots, k$.  
 
Let $F_k= \ $span$  \{g_1, \ldots, g_k\}$. $F_k$ is  
a subspace of $V_m(\Omega)$ of dimension $k$. For every $g\in F_k\smallsetminus 
\{0\}$, $g=\sum_{i=1}^k a_i g_i$, with suitable constants $a_i\in \mathbb{R}$. 
Let us put $f=\sum_{i=1}^k a_i f_i$,  clearly $g=P_m(f)$.
Then, by i) and iii) of Proposition \ref{prop1} and recalling that $\int_\Omega m\, dx<0$,  we have   
\begin{equation*} \begin{split}\cfrac{\langle G_m (g), 
g\rangle_{V_m(\Omega)}}{\|g\|^2_{V_m(\Omega)}}\, = &
\cfrac{\langle mg, g\rangle_{L^2(\Omega)}}{\|g\|^2_{V_m(\Omega)}} \, =
\cfrac{\langle mP_m(f), P_m(f)\rangle_{L^2(\Omega)}}{\|g\|^2_{V_m(\Omega)}} \, =
\cfrac{\langle mf, P_m(f)\rangle_{L^2(\Omega)}}{\|g\|^2_{V_m(\Omega)}} \, \\=& 
\cfrac{\langle mf, f\rangle_{L^2(\Omega)}-\left\langle mf, \int_\Omega mf\,dx /
\int_\Omega m\,dx\right\rangle_{L^2(\Omega)}}{\|g\|   
^2_{V_m(\Omega)}} \, \\=& 
\cfrac{\langle mf, f\rangle_{L^2(\Omega)}-\left(\int_\Omega mf\,dx\right)^2/
\int_\Omega m\,dx}{\|g\|^2_{V_m(\Omega)}} \,\\  \geq &  
\cfrac{\sum_{i, j=1}^k a_i a_j\int_\Omega m f_i f_j  \,dx}{\sum_{i,j=1}^k\langle g_i, g_j\rangle_{V_m(\Omega)} a_i a_j} \,   
=\cfrac{\sum_{i=1}^k a_i^2}{\sum_{i,j=1}^k\langle g_i, g_j\rangle_{V_m(\Omega)} a_i a_j} \,=
\cfrac{\|a\|^2_{\mathbb{R}^k}}{\langle A_k a, a\rangle_{\mathbb{R}^k}}\,\geq \cfrac{1}{\|A_k\|}\, >0,
\end{split}
\end{equation*}
where $\|a\|_{\mathbb{R}^k}$, $\|A_k\|$ and $\langle A_k a, a\rangle_{\mathbb{R}^k}$ denote, respectively,
the euclidean norm of the vector $a=(a_1, \ldots, a_k)$, the norm of the non null matrix 
$A_k=\left( \langle g_i, g_j\rangle_{V_m(\Omega)} \right)_{i, j=1}^k$ and the inner product in $\mathbb{R}^k$.
From the Fischer's Principle \eqref{Fischer1} we conclude that $\mu_k(m)\geq \cfrac{1}{\|A_k\|}\, >0$ for
every $k$.\\
The cases iii) and iv) are similarly proved.
\end{proof}

\subsection{Weak formulation of problem \eqref{p0}} 
The operators $E_m$ and $G_m$ are related to the following problem
with Neumann boundary conditions   
\begin{equation}\label{BVPm}
\begin{cases}-\Delta u = mf \quad &\text{in } \Omega\\
\cfrac{\partial u}{\partial \nu}=0 &\text{on } \partial \Omega. 
\end{cases}  
\end{equation}

For $m\in L^{\infty}(\Omega)$ and $f\in L^2(\Omega)$,  a \emph{weak solution} of problem \eqref{BVPm} is a function $u\in H^1(\Omega)$ such that
\begin{equation*}\label{FD1r}
\int_\Omega \nabla u\cdot \nabla\varphi \, dx =  \int_\Omega m f\varphi\, dx \quad\forall\, \varphi\in H^1(\Omega)  
\end{equation*}
or, equivalently,    
\begin{equation}\label{formadebh1}
\left\langle \nabla u,\nabla\varphi\right\rangle_{L^2(\Omega)}
=\langle m f,\varphi\rangle_{L^2(\Omega)} \quad\forall\, \varphi\in H^1(\Omega).
\end{equation}
The assumptions under which problem \eqref{BVPm} admits solutions are well known (see for
example \cite{Mik}). 
By using the tools introduced in Subsection \ref{operators}, we find those conditions independently.  
      
\begin{lemma}\label{equivalenza}
 Let $m \in L^\infty(\Omega)$ such that $\int_\Omega m\, dx \neq 0$ and
 $f\in L^2(\Omega)$. Then $u\in H^1(\Omega)$ satisfies
 \begin{equation}\label{formaequivpm} 
\left\langle \nabla u,\nabla\varphi\right\rangle_{L^2(\Omega)}
=\langle m P_m(f),\varphi\rangle_{L^2(\Omega)} \quad\forall\, \varphi\in H^1(\Omega)  
\end{equation} 
 if and only if $P_m(u)$ satisfies
 \begin{equation}\label{formaequiv} 
\left\langle P_m(u),\varphi\right\rangle_{V_m(\Omega)}
=\langle m f,\varphi\rangle_{L^2(\Omega)} \quad\forall\, \varphi\in V_m(\Omega).
\end{equation}
\end{lemma}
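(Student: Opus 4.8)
The plan is to prove the two implications separately, relying entirely on the algebraic properties of the projection $P_m$ collected in Proposition \ref{prop1}, together with the definition \eqref{prodscalvm} of the scalar product in $V_m(\Omega)$. The guiding idea is that the bilinear forms $\langle \nabla u, \nabla \varphi\rangle_{L^2(\Omega)}$ and $\langle P_m(u),\varphi\rangle_{V_m(\Omega)}$ coincide whenever $\varphi$ lies in $V_m(\Omega)$, by virtue of property iv), while the passage between $H^1(\Omega)$ and $V_m(\Omega)$ on the side of the test functions is mediated by the projection itself.

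For the forward implication, I assume \eqref{formaequivpm} and restrict the test functions to $\varphi \in V_m(\Omega) \subset H^1(\Omega)$. On the left-hand side I would use iv) of Proposition \ref{prop1}, namely $\nabla P_m(u) = \nabla u$, to rewrite $\langle \nabla u, \nabla\varphi\rangle_{L^2(\Omega)}$ as $\langle P_m(u),\varphi\rangle_{V_m(\Omega)}$ through \eqref{prodscalvm}. On the right-hand side I would apply i) of Proposition \ref{prop1} to shift the projection off $f$, obtaining $\langle mP_m(f),\varphi\rangle_{L^2(\Omega)} = \langle mf, P_m(\varphi)\rangle_{L^2(\Omega)}$, and then invoke iii), since $\varphi \in V_m(\Omega) \subset L^2_m(\Omega)$ forces $P_m(\varphi) = \varphi$. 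Combining these two computations yields precisely \eqref{formaequiv}.

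For the reverse implication, I assume \eqref{formaequiv} and take an arbitrary $\varphi \in H^1(\Omega)$. The key step is to test \eqref{formaequiv} against $P_m(\varphi) \in V_m(\Omega)$, which is legitimate because $P_m(H^1(\Omega)) \subset V_m(\Omega)$. On the left I would apply iv) twice, so that $\langle P_m(u), P_m(\varphi)\rangle_{V_m(\Omega)} = \int_\Omega \nabla u \cdot \nabla \varphi\, dx = \langle \nabla u, \nabla\varphi\rangle_{L^2(\Omega)}$; on the right I would again use i) to obtain $\langle mf, P_m(\varphi)\rangle_{L^2(\Omega)} = \langle mP_m(f),\varphi\rangle_{L^2(\Omega)}$. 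This recovers \eqref{formaequivpm} for every $\varphi \in H^1(\Omega)$.

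I do not anticipate a genuine obstacle, as the whole argument is a bookkeeping exercise in the identities for $P_m$. The only point deserving care is the asymmetric role of the two test-function spaces: in one direction I shrink the class of test functions from $H^1(\Omega)$ down to $V_m(\Omega)$, while in the other I enlarge the effective class by feeding the projected function $P_m(\varphi)$ into \eqref{formaequiv}. Keeping track of which identity among i), iii), and iv) applies on which side, and checking that each test function genuinely belongs to the space demanded by the formulation in use, constitutes the entirety of the work.
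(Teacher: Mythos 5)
Your proposal is correct and takes essentially the same route as the paper's own proof: in the forward direction you restrict to $\varphi\in V_m(\Omega)$ and use iv), i) and iii) of Proposition \ref{prop1} exactly as the paper does, and in the reverse direction you test \eqref{formaequiv} against $P_m(\varphi)$ and apply iv) twice together with i), which is precisely the paper's computation. No gaps.
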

\begin{proof}
If $u\in H^1(\Omega)$ satisfies \eqref{formaequivpm}, for all $\varphi\in V_m(\Omega)$, by iv), i) and iii) of
Proposition \ref{prop1}, we have  
 \begin{equation*} 
\left\langle P_m(u),\varphi\right\rangle_{V_m(\Omega)}
=\left\langle\nabla P_m(u),\nabla\varphi\right\rangle_{L^2(\Omega)}
=\left\langle\nabla u,\nabla\varphi\right\rangle_{L^2(\Omega)}
=\langle m P_m(f),\varphi\rangle_{L^2(\Omega)}=\langle mf,\varphi\rangle_{L^2(\Omega)}.        
\end{equation*}      
Vice versa, let $u$ verify \eqref{formaequiv}. For all $\varphi\in H^1(\Omega)$,
recalling iv) and i) of Proposition \ref{prop1} we have
 \begin{equation*} 
 \begin{split} 
\left\langle \nabla u,\nabla\varphi\right\rangle_{L^2(\Omega)}
&=\left\langle\nabla P_m(u),\nabla P_m(\varphi)\right\rangle_{L^2(\Omega)}
=\left\langle P_m(u), P_m(\varphi)\right\rangle_{V_m(\Omega)}\\
&=\langle m f,P_m(\varphi)\rangle_{L^2(\Omega)} 
=\langle m P_m(f),\varphi\rangle_{L^2(\Omega)}.  
\end{split}    
\end{equation*}    
\end{proof}    

As the following proposition says, the operator $E_m$ provides the solutions of problem 
\eqref{BVPm}. 
 
\begin{proposition}
Let $m \in L^\infty(\Omega)$ such that $\int_\Omega m\, dx \neq 0$ and $E_m$ be the operator
\eqref{Em}. Then\\     
i) \eqref{formadebh1} has a solution if and only if $f\in L^2_m(\Omega)$;\\
ii) if $f\in L^2_m(\Omega)$, \eqref{formadebh1} has a unique solution $\overline{u}\in 
V_m(\Omega)$ and any other solution is of form   
$\overline{u} + c$, $c\in\mathbb{R}$;\\ 
iii) $\overline{u}=E_m(f)$, i.e. $\overline{u}$ is the unique solution of 
$$\langle \overline{u},\varphi\rangle_{V_m(\Omega)}=\langle m f,\varphi\rangle_{L^2(\Omega)}
\quad\forall\varphi\in V_m(\Omega);$$
iv) the estimate  \begin{equation*}
\|\overline{u}\|_{H^1(\Omega)}\leq C\cdot C_1(m)\left(C^2 \cdot C_1^2(m)+1\right)^{1/2} \|m\|_{L^\infty(\Omega)}\|f\|_{L^2(\Omega)}          
\end{equation*}      
holds.    
\end{proposition}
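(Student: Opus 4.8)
The plan is to translate everything into the language of the operator $E_m$ and to exploit Lemma \ref{equivalenza}, after noting the elementary but decisive observation that $f\in L^2_m(\Omega)$ is equivalent to $P_m(f)=f$. Indeed, by Definition \ref{def1}, $P_m(f)=f$ precisely when $\int_\Omega mf\,dx=0$. Consequently, whenever $f\in L^2_m(\Omega)$ the right-hand side of \eqref{formadebh1} coincides with $\langle mP_m(f),\varphi\rangle_{L^2(\Omega)}$, so that \eqref{formadebh1} is literally \eqref{formaequivpm}; this is the bridge that lets Lemma \ref{equivalenza} do all the work.

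For i), the necessity is immediate: if $u\in H^1(\Omega)$ solves \eqref{formadebh1}, I would test with the constant function $\varphi\equiv 1\in H^1(\Omega)$, so that the left-hand side vanishes and the right-hand side reads $\int_\Omega mf\,dx=0$, i.e. $f\in L^2_m(\Omega)$. For the sufficiency, given $f\in L^2_m(\Omega)$ I would take $u=E_m(f)\in V_m(\Omega)$. Since $E_m(f)\in V_m(\Omega)$, property iii) of Proposition \ref{prop1} gives $P_m(E_m(f))=E_m(f)$, and by \eqref{Emf} this function satisfies \eqref{formaequiv}; Lemma \ref{equivalenza} then yields that $E_m(f)$ satisfies \eqref{formaequivpm}, which (because $P_m(f)=f$) is exactly \eqref{formadebh1}. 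Hence a solution exists.

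For ii) and iii), I would first settle uniqueness up to constants: if $u_1,u_2$ both solve \eqref{formadebh1}, their difference $w=u_1-u_2$ satisfies $\langle\nabla w,\nabla\varphi\rangle_{L^2(\Omega)}=0$ for all $\varphi\in H^1(\Omega)$; choosing $\varphi=w$ forces $\|\nabla w\|_{L^2(\Omega)}=0$, so $w$ is constant because $\Omega$ is connected. To single out the $V_m(\Omega)$-representative, I note that $\overline{u}:=E_m(f)$ is a solution lying in $V_m(\Omega)$; any other solution in $V_m(\Omega)$ would differ from it by a constant $c$ that also lies in $V_m(\Omega)$, whence $c\int_\Omega m\,dx=0$ and therefore $c=0$ since $\int_\Omega m\,dx\neq 0$. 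This proves uniqueness in $V_m(\Omega)$, the representation $\overline{u}=E_m(f)$ of iii), and the description of the general solution as $\overline{u}+c$.

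Finally, iv) is a chaining of estimates already available. Putting $u=\overline{u}=E_m(f)$, the bound \eqref{normauvm} gives $\|\overline{u}\|_{V_m(\Omega)}\leq C\cdot C_1(m)\|m\|_{L^\infty(\Omega)}\|f\|_{L^2(\Omega)}$, and then the norm comparison \eqref{stimanorme2} upgrades this to the $H^1(\Omega)$-norm with the extra factor $(C^2\cdot C_1^2(m)+1)^{1/2}$, yielding exactly the claimed inequality. The only genuinely delicate point in the whole argument is the sufficiency direction of i): one must apply Lemma \ref{equivalenza} in the direction ``$P_m(u)$ solves \eqref{formaequiv} $\Rightarrow$ $u$ solves \eqref{formaequivpm}'' with the specific choice $u=E_m(f)$, and recognize that the identity $P_m(f)=f$ is what collapses \eqref{formaequivpm} onto the original weak formulation \eqref{formadebh1}; everything else is routine.
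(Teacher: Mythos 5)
Your proof is correct and takes essentially the same route as the paper: necessity of $f\in L^2_m(\Omega)$ by testing with $\varphi\equiv 1$, sufficiency by observing $P_m(f)=f$ and combining Lemma \ref{equivalenza} with the Riesz-representation solution $\overline{u}=E_m(f)$ of \eqref{formaequiv}, and iv) by chaining \eqref{normauvm} with \eqref{stimanorme2}. The only cosmetic difference is in the uniqueness step: you test the difference of two solutions with itself and use connectedness of $\Omega$, whereas the paper reads the full solution set $\{u\in H^1(\Omega):P_m(u)=\overline{u}\}=\{\overline{u}+c,\ c\in\mathbb{R}\}$ directly off the equivalence in Lemma \ref{equivalenza}; the two arguments are interchangeable.
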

\begin{proof}
If \eqref{formadebh1} admits a solution $u$, choosing $\varphi\equiv 1$ we obtain $f\in L^2_m(\Omega)$.\\   
Vice versa, let $f\in L^2_m(\Omega)$. By Lemma \ref{equivalenza}, $u\in H^1(\Omega)$ is
a solution of  \eqref{formadebh1} if and only if $P_m(u)$ is a solution of \eqref{formaequiv}.
By \eqref{volpe} and \eqref{Emf}, we know that \eqref{formaequiv} admits the unique solution 
$\overline{u}=E_m(f)\in V_m(\Omega)$.         
Then the set of solutions of \eqref{formadebh1} is $\{u\in H^1(\Omega):P_m(u)=\overline{u}\}=\{u\in H^1(\Omega):u=\overline{u}+c,\, c\in\mathbb{R}\}$
and only $\overline{u}$ belongs to $V_m(\Omega)$.
This proves i), ii) and iii).\\
iv) It follows immediately from  \eqref{stimanorme2} and \eqref{normauvm}.
\end{proof}

Finally, we introduce the weak formulation of problem \eqref{p0}.  
A function $u\in H^1(\Omega)$ is said an \emph{eigenfunction} of \eqref{p0} associated to the \emph{eigenvalue} $\lambda$ if \begin{equation*}\label{FDautr}
\int_\Omega \nabla u\cdot \nabla\varphi \, dx = \lambda \int_\Omega m u\varphi\, dx \quad\forall\, \varphi\in H^1(\Omega)
\end{equation*}  

or, equivalently,  
\begin{equation}\label{falena}
\langle \nabla u,\nabla \varphi\rangle_{L^2(\Omega)}
=\lambda \langle m u,\varphi\rangle_{L^2(\Omega)} \quad\forall\, \varphi\in H^1(\Omega).
\end{equation} 
It is easy to check that zero is an eigenvalue and the associated eigenfunctions are all of the
constant functions.
\begin{proposition}\label{equivalenzalambdamu}
Let $m \in L^\infty(\Omega)$ such that $\int_\Omega m\, dx \neq 0$ and $G_m$ be
the operator \eqref{Gm}. 
Then the nonzero eigenvalues of problem \eqref{p0} are exactly the reciprocals 
of the nonzero eigenvalues of the operator $G_m$ and the correspondent eigenspaces coincide. 
\end{proposition}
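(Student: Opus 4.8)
The plan is to set up a bijection between the weak eigenvalue equation \eqref{falena} and the operator eigenvalue equation $G_m(w)=\mu w$ in $V_m(\Omega)$, exploiting that the bilinear form \eqref{prodscalvm} defining $\langle\cdot,\cdot\rangle_{V_m(\Omega)}$ is precisely the Dirichlet form appearing on the left-hand side of \eqref{falena}. The correspondence is $\lambda\leftrightarrow\mu=1/\lambda$, and I will prove both inclusions of the associated eigenspaces.

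First I would treat the direct implication. Let $\lambda\neq0$ be an eigenvalue of \eqref{p0} with eigenfunction $u\in H^1(\Omega)$. Testing \eqref{falena} with the constant $\varphi\equiv1$ annihilates the left-hand side, so $\lambda\int_\Omega m u\,dx=0$; since $\lambda\neq0$ this forces $\int_\Omega mu\,dx=0$, i.e. $u\in L^2_m(\Omega)$ and hence $u\in V_m(\Omega)$. Now I would restrict the test functions in \eqref{falena} to $\varphi\in V_m(\Omega)$, recognizing the left-hand side as $\langle u,\varphi\rangle_{V_m(\Omega)}$ via \eqref{prodscalvm} and the right-hand side, through the defining relation \eqref{Gmf} of $G_m$ with $f=u$, as $\lambda\langle G_m(u),\varphi\rangle_{V_m(\Omega)}$. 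This gives $\langle u-\lambda G_m(u),\varphi\rangle_{V_m(\Omega)}=0$ for every $\varphi\in V_m(\Omega)$; choosing $\varphi=u-\lambda G_m(u)$ and using that \eqref{prodscalvm} is a genuine scalar product yields $u=\lambda G_m(u)$, so $\mu=1/\lambda$ is an eigenvalue of $G_m$ with the same eigenfunction $u$.

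For the converse, I would start from $w\in V_m(\Omega)\setminus\{0\}$ with $G_m(w)=\mu w$, $\mu\neq0$. Rewriting this as $\langle G_m(w),\varphi\rangle_{V_m(\Omega)}=\mu\langle w,\varphi\rangle_{V_m(\Omega)}$ and invoking \eqref{Gmf} and \eqref{prodscalvm} gives $\langle m w,\varphi\rangle_{L^2(\Omega)}=\mu\langle\nabla w,\nabla\varphi\rangle_{L^2(\Omega)}$ for all $\varphi\in V_m(\Omega)$, which is \eqref{falena} with $\lambda=1/\mu$, but only for test functions in $V_m(\Omega)$. The step I expect to require the most care is upgrading this to all $\varphi\in H^1(\Omega)$. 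Given an arbitrary $\varphi\in H^1(\Omega)$, I would split it, directly from Definition \ref{def1}, as $\varphi=P_m(\varphi)+c$ with $c\in\mathbb{R}$ constant and $P_m(\varphi)\in V_m(\Omega)$. The constant part contributes nothing to either side: $\nabla c=0$ handles the gradient term, while $\int_\Omega m w\,dx=0$ (since $w\in L^2_m(\Omega)$) kills $\langle mw,c\rangle_{L^2(\Omega)}$. Hence \eqref{falena} holds for all $\varphi\in H^1(\Omega)$, and $\lambda=1/\mu$ is an eigenvalue of \eqref{p0} with eigenfunction $w$.

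Finally, the two implications together show that for $\lambda\neq0$ the eigenfunctions of \eqref{p0} associated to $\lambda$ are exactly the elements of $V_m(\Omega)$ satisfying $G_m(w)=(1/\lambda)w$; in particular the $\lambda$-eigenspace of \eqref{p0} and the $(1/\lambda)$-eigenspace of $G_m$ are the same subspace of $V_m(\Omega)$, which is the asserted coincidence of eigenspaces, and the reciprocal correspondence $\lambda\mapsto1/\lambda$ exhausts all nonzero eigenvalues on both sides. The only genuinely non-formal ingredients are the automatic membership $u\in V_m(\Omega)$ for nonzero eigenvalues and the projection argument extending the test space back to $H^1(\Omega)$; both crucially use the hypothesis $\int_\Omega m\,dx\neq0$.
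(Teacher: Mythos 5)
Your proof is correct, and its skeleton matches the paper's: test \eqref{falena} with $\varphi\equiv 1$ to force $u\in V_m(\Omega)$, then read the eigenvalue equation through the scalar product \eqref{prodscalvm} and the defining relation \eqref{Gmf}. The one genuine divergence is in the converse. Where the paper first replaces $\mu u$ by $P_m(\mu u)$ (via iii) of Proposition \ref{prop1}) and then invokes Lemma \ref{equivalenza} to upgrade the identity from test functions in $V_m(\Omega)$ to all of $H^1(\Omega)$, you inline the argument: decompose an arbitrary $\varphi\in H^1(\Omega)$ as $\varphi=P_m(\varphi)+c$ with $c=\int_\Omega m\varphi\,dx/\int_\Omega m\,dx$ constant, and observe that $c$ contributes nothing to either side since $\nabla c=0$ and $\int_\Omega mw\,dx=0$. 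This is precisely the mechanism hidden inside Lemma \ref{equivalenza}, so the substance is the same, but your version is self-contained and arguably more transparent at this point; the paper's route buys reuse of a lemma it needs again elsewhere (for instance in the continuity arguments of Lemma \ref{teo1}). Two minor stylistic notes: in the forward direction the paper reads $G_m(u)=u/\lambda$ directly off the uniqueness built into the Riesz-representation definition \eqref{Gmf}, so your explicit nondegeneracy step with $\varphi=u-\lambda G_m(u)$, while perfectly valid, is not needed; and your closing observation that the two implications give equality (not merely mutual inclusion) of the $\lambda$-eigenspace of \eqref{p0} and the $(1/\lambda)$-eigenspace of $G_m$ is exactly the content the paper leaves implicit.
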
      
 \begin{proof}
If $\lambda\neq 0$ is an eigenvalue and $u$ is an associated eigenfunction of problem \eqref{p0},
choosing $\varphi\equiv 1$ in \eqref{falena}, we obtain $u\in V_m(\Omega)$. 
By \eqref{falena} and \eqref{prodscalvm} we have 
\begin{equation*}     
\left\langle \frac{u}{\lambda},\varphi\right\rangle_{V_m(\Omega)}
=\langle m u,\varphi\rangle_{L^2(\Omega)} \quad\forall\, \varphi\in V_m(\Omega)
\end{equation*}
and then, by definition \eqref{Gmf} of $G_m$, $G_m (u)= \cfrac{u}{\lambda}\, $. \\
Vice versa, let $G_m(u)=\mu u$, with $\mu\neq 0$. Then we have
\begin{equation*}
\left\langle \mu u,\varphi\right\rangle_{V_m(\Omega)}
=\langle m u,\varphi\rangle_{L^2(\Omega)} \quad\forall\, \varphi\in V_m(\Omega).
\end{equation*}
By iii) of Proposition \ref{prop1} we obtain 
\begin{equation*}
\left\langle P_m(\mu u),\varphi\right\rangle_{V_m(\Omega)}
=\langle m u,\varphi\rangle_{L^2(\Omega)} \quad\forall\, \varphi\in V_m(\Omega),
\end{equation*}
using Lemma \ref{equivalenza} we find       
\begin{equation*}    
\mu\left\langle\nabla u,\nabla\varphi\right\rangle_{L^2(\Omega)} 
=\langle m P_m(u),\varphi\rangle_{L^2(\Omega)} \quad\forall\, \varphi\in H^1(\Omega)  
\end{equation*} 
and finally, applying iii) of Proposition \ref{prop1} again, we conclude 
\begin{equation*}
\left\langle\nabla u,\nabla\varphi\right\rangle_{L^2(\Omega)}
=\frac{1}{\mu}\langle m u,\varphi\rangle_{L^2(\Omega)} \quad\forall\, \varphi\in H^1(\Omega),
\end{equation*}  
i.e. $1/\mu$ is an eigenvalue of \eqref{p0}.
\end{proof}     
  
Consequently, in general,
the eigenvalues of problem \eqref{p0} form two monotone sequences
$$ 0<\lambda_1(m)\leq \lambda_2(m)\leq\ldots\leq  \lambda_k(m)\leq \ldots$$
and
$$ \ldots\leq\lambda_{-k}(m)\leq\ldots\leq \lambda_{-2}(m)\leq  \lambda_{-1}(m)<0  ,$$
where every eigenvalue appears as many times as its multiplicity, the latter being finite
owing to the compactness of $G_m$. \\
The variational characterization \eqref{Fischer1} for $k=1$, assuming that $|\{m>0\}|>0$
and $\int_\Omega m\,dx<0$,  becomes  
  
\begin{equation}\label{mu1} \mu_1(m) =  \max_{f\in V_m(\Omega)\atop f\neq 0}
\cfrac{\langle G_m (f), f \rangle_{V_m(\Omega)} }{\|f\|^2_{V_m(\Omega)}} \,=\max_{f\in V_m(\Omega)\atop f\neq 0}
\cfrac{\int_\Omega m f^2 \, dx }{\int_\Omega |\nabla f|^2\, dx}.  
\end{equation}    
The maximum in \eqref{mu1} 
is obtained if and only if $f$ is an eigenfunction relative to $\mu_1$. 
Similarly, for $\lambda_1(m)$ we have
\begin{equation}\label{2a}\lambda_1(m) = \min_{u\in V_m(\Omega)\atop \int_\Omega m u^2dx>0}
\cfrac{\int_\Omega |\nabla u|^2\, dx}{\int_\Omega m u^2 \, dx}\,\end{equation}     
and the minimum in \eqref{2a} 
is obtained if and only if $u$ is an eigenfunction relative to $\lambda_1$.

We note that the characterization
\begin{equation*}
\quad\lambda_1(m) = \min_{u\in H^1(\Omega)\atop \int_\Omega m u^2dx>0}
\cfrac{\int_\Omega |\nabla u|^2\, dx}{\int_\Omega m u^2 \, dx}\,
\end{equation*}
also holds and it is more often used in the literature.  
  
\begin{proposition} Let $m\in L^\infty(\Omega)$ such that $|\{m>0\}|>0$ and $\int_\Omega m\, dx<0$. Then $\mu_1(m)$ is simple and any associated eigenfunction is one signed in $\Omega$.
\end{proposition}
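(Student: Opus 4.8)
The plan is to transfer the statement to problem \eqref{p0} through Proposition \ref{equivalenzalambdamu}: since the nonzero eigenvalues of $G_m$ are the reciprocals of those of \eqref{p0} with coinciding eigenspaces, proving that $\mu_1(m)$ is simple with one signed eigenfunctions is equivalent to the same assertions for $\lambda_1(m)=1/\mu_1(m)$. I would then work with the minimum characterization of $\lambda_1(m)$ and a strong maximum principle. Fix an eigenfunction $u$ of $\mu_1(m)$; by Proposition \ref{equivalenzalambdamu} it is an eigenfunction of \eqref{p0} relative to $\lambda_1(m)$, and since it realizes the minimum in \eqref{2a} it also realizes the minimum of $\int_\Omega|\nabla v|^2\,dx/\int_\Omega m v^2\,dx$ over all of $H^1(\Omega)$.

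To obtain the sign property, I would use that $|\nabla |u||=|\nabla u|$ a.e.\ and $\int_\Omega m|u|^2\,dx=\int_\Omega m u^2\,dx>0$, whence $|u|$ has the same Rayleigh quotient as $u$ and is therefore also a minimizer, hence an eigenfunction relative to $\lambda_1(m)$. Thus $w:=|u|\ge 0$ is a nontrivial weak solution of $-\Delta w=c(x)w$, with $c:=\lambda_1(m)\,m\in L^\infty(\Omega)$, in the sense of \eqref{falena}. Elliptic regularity gives the continuity of $w$, and the Harnack inequality for nonnegative weak solutions of elliptic equations with bounded coefficients, applied on the connected set $\Omega$, forces $w>0$ in $\Omega$. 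As $u$ is continuous and hence never vanishes on the connected domain $\Omega$, it cannot change sign, which proves that every eigenfunction of $\mu_1(m)$ is one signed.

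For simplicity I would argue by contradiction, assuming two linearly independent eigenfunctions $u_1,u_2$ of $\mu_1(m)$. By the previous step each is one signed, so after replacing $u_i$ by $-u_i$ if needed we may take $u_1,u_2>0$ in $\Omega$. Choosing $x_0\in\Omega$ and $t_0:=u_1(x_0)/u_2(x_0)>0$, the linear combination $w:=u_1-t_0u_2$ is again an eigenfunction of $\mu_1(m)$ with $w(x_0)=0$. If $w\not\equiv 0$ it would be one signed, so that $w$ or $-w$ is a nontrivial nonnegative eigenfunction vanishing at the interior point $x_0$, contradicting the strict positivity proved above. Hence $w\equiv 0$, i.e.\ $u_1=t_0u_2$, against the assumed independence; therefore the eigenspace is one-dimensional and $\mu_1(m)$ is simple.

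The crux of the argument is the analytic input used twice above: that a nonnegative, nontrivial weak solution of $-\Delta w=c(x)w$ with $c\in L^\infty(\Omega)$ is strictly positive throughout the connected domain $\Omega$. This rests on the De Giorgi--Nash--Moser regularity theory and the Harnack inequality for linear elliptic operators with $L^\infty$ coefficients, and is the only nontrivial ingredient beyond the tools already developed in this section. A secondary point to check is that minimizers of the Rayleigh quotient over $H^1(\Omega)$, and not only over $V_m(\Omega)$, are genuine eigenfunctions, which follows at once by computing the first variation of the quotient and recovering \eqref{falena}.
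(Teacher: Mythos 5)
Your proof is correct, and while it shares the paper's overall skeleton (show $|u|$ is again an optimizer of a Rayleigh quotient, invoke Harnack for strict positivity, then kill a second independent eigenfunction by a linear combination), it differs from the paper's proof in both sub-steps. For the sign property, the paper stays in $V_m(\Omega)$ and uses the max characterization \eqref{mu1} of $\mu_1$: since $|u|$ need not satisfy $\int_\Omega m|u|\,dx=0$, it replaces $|u|$ by the projection $P_m(|u|)$ and exploits the hypothesis $\int_\Omega m\,dx<0$ through the identity $\int_\Omega m\,(P_m(|u|))^2\,dx=\int_\Omega m u^2\,dx-\bigl(\int_\Omega m|u|\,dx\bigr)^2\big/\int_\Omega m\,dx\;\geq\;\int_\Omega m u^2\,dx$, so that equality in the resulting chain forces $\int_\Omega m|u|\,dx=0$ and hence $|u|=P_m(|u|)\in V_m(\Omega)$. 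You avoid the projection entirely by passing to the $H^1(\Omega)$ form of the quotient, where $|u|$ is directly admissible; this is legitimate because the paper asserts the $H^1$ characterization of $\lambda_1$ just before the proposition, and your first-variation check that $H^1$-minimizers satisfy \eqref{falena} is sound since the constraint set $\{v\in H^1(\Omega):\int_\Omega m v^2\,dx>0\}$ is open. Be aware, though, that the hypothesis $\int_\Omega m\,dx<0$ has not vanished from your argument: it is precisely what makes the $H^1$ minimum coincide with the $V_m$ minimum (the justification of that asserted characterization is the same projection inequality), so your route trades the paper's explicit computation for reliance on that stated fact. For simplicity, the paper normalizes by integrals, setting $\alpha=\int_\Omega v\,dx\big/\int_\Omega u\,dx$ so that the eigenfunction $\alpha u-v$ has zero mean and therefore cannot be one signed unless it vanishes identically; you normalize pointwise at an interior point $x_0$ and contradict the non-vanishing furnished by Harnack. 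Your variant needs the continuity of eigenfunctions to evaluate at $x_0$ (available from the regularity you already invoked) and uses the full strength of strict positivity rather than mere one-signedness, whereas the paper's integral trick needs only one-signedness; both are valid, with the paper's being marginally more economical.
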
  

\begin{proof} Let $u\in V_m(\Omega)$ be an eigenfunction related to $\mu_1(m)$. Let us show 
that $|u|$ is an eigenfunction as well. Consider the  projection $P_m(|u|)$ of $|u|$ on $V_m(\Omega)$, where $P_m$ is defined in Definition \ref{def1}.
By \eqref{mu1} and  iv) of Proposition \ref{prop1}  we have 
\begin{equation*}   
\mu_1(m)\geq \cfrac{\int_\Omega m (P_m(|u|))^2 \, dx }{\int_\Omega |\nabla P_m(|u|)|^2\, dx}=
\cfrac{\int_\Omega m u^2 \, dx-\frac{\left(\int_\Omega m |u|\, dx\right)^2}{\int_\Omega m\,dx}}{\int_\Omega |\nabla u|^2\, dx}
\geq\cfrac{\int_\Omega m u^2 \, dx }{\int_\Omega |\nabla u|^2\, dx}=\mu_1(m). 
\end{equation*}
Therefore, we have the equality sign in the previous chain. In particular, we find $\int_\Omega m |u| 
\, dx=0$, then $|u|$ belongs to $V_m(\Omega)$ and finally, by iii) of Proposition \ref{prop1}, $|u|
=P_m(|u|)$ is an eigenfunction. 
By Proposition  \ref{equivalenzalambdamu}, $|u|$ satisfies the equation $-\Delta |u|
=\mu_1(m)^{-1} m|u|$ and, by Harnack inequality        
(see \cite{GT}), we conclude that $|u|>0$ in $\Omega$; therefore $u$ is one signed in $\Omega$.\
Let $u,v$ be two  eigenfunctions; set $\alpha=\frac{\int_\Omega v\,dx}{\int_\Omega u\,dx}$, note that $\int_\Omega (\alpha u- v)\, dx=0$. Note that also $\alpha u-v$ is an eigenfunction  of $\mu_1(m)$. If $\alpha u-v$ was not identically zero, then, it would be 
one signed and hence $\int_\Omega (\alpha u- v)\, dx\neq0$, reaching a contradiction. Therefore $v=\alpha u$ and $\mu_1(m) $ is simple.  
\end{proof}        

As a consequence of Proposition \ref{equivalenzalambdamu}, we have the following 
  
\begin{corollary} Let $m\in L^\infty(\Omega)$ such that $|\{m>0\}|>0$ and $\int_\Omega m\,     dx<0$. Then $\lambda_1(m)$ is simple and any associated eigenfunction is one signed in $\Omega$.    
\end{corollary}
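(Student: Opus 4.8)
The plan is to deduce this corollary directly from the immediately preceding Proposition (which establishes that $\mu_1(m)$ is simple with a one-signed eigenfunction) together with Proposition \ref{equivalenzalambdamu}, which provides the precise dictionary between the spectrum of problem \eqref{p0} and that of the operator $G_m$. No new analytic estimates should be required; the entire content is the transfer of the two qualitative properties across the reciprocal correspondence.

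First I would identify which eigenvalue of $G_m$ corresponds to $\lambda_1(m)$. Under the hypotheses $|\{m>0\}|>0$ and $\int_\Omega m\,dx<0$, Proposition \ref{segnorho}(ii) guarantees a sequence of positive eigenvalues $\mu_k(m)$ of $G_m$, and, as recorded just after the Fischer Principle \eqref{Fischer1}, this sequence is decreasing and converges to zero, so $\mu_1(m)$ is the largest positive eigenvalue. By Proposition \ref{equivalenzalambdamu}, the positive eigenvalues of \eqref{p0} are exactly the reciprocals $1/\mu_k(m)$; taking reciprocals reverses the order, so the smallest positive eigenvalue of \eqref{p0} is $\lambda_1(m)=1/\mu_1(m)$. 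This reciprocal identification is the only point that needs care, and it is already furnished by the monotonicity statement and Proposition \ref{equivalenzalambdamu}.

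The conclusion then follows at once. Proposition \ref{equivalenzalambdamu} asserts not merely that $\lambda_1(m)=1/\mu_1(m)$ but that the corresponding eigenspaces coincide. Hence the eigenspace of $\lambda_1(m)$ in problem \eqref{p0} equals the eigenspace of $\mu_1(m)$ for $G_m$. By the preceding Proposition the latter is one-dimensional and is spanned by a function that is one-signed in $\Omega$. Therefore $\lambda_1(m)$ is simple and every eigenfunction associated with $\lambda_1(m)$ is one-signed in $\Omega$, which is exactly the assertion of the corollary.
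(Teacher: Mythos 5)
Your proof is correct and follows exactly the route the paper intends: the paper states this corollary with no separate proof, presenting it as an immediate consequence of Proposition \ref{equivalenzalambdamu} (reciprocal eigenvalues, coinciding eigenspaces) applied to the preceding proposition on $\mu_1(m)$. Your explicit verification that taking reciprocals reverses the order, so that the largest positive eigenvalue $\mu_1(m)$ of $G_m$ corresponds to the smallest positive eigenvalue $\lambda_1(m)$ of \eqref{p0}, spells out the one detail the paper leaves tacit, and the transfer of simplicity and one-signedness via the coincidence of eigenspaces is exactly right.
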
   
 We call $\lambda_1(m)$ the principal eigenvalue of problem \eqref{p0}.    
Throughout the paper we will denote by $u_m$ the unique positive eigenfunction of both $G_m$
(relative to $\mu_1(m)$)       
and problem \eqref{p0} (relative to $\lambda_1(m)$), normalized by 
\begin{equation}\label{normaliz1}
\|u_m\|_{V_m(\Omega)}=1,
\end{equation}
which is equivalent to 
\begin{equation}\label{normaliz2}  
\int_\Omega m u_m^2 \, dx= \mu_1(m)=\cfrac{1}{\lambda_1(m)}\,.
\end{equation}  
By standard regularity theory (see \cite{GT}), $u_m\in W^{2,2}_{\rm loc}(\Omega)\cap C^{1,
\beta}(\Omega)$ for all $0<\beta<1$.  

As last comment, we observe that $\mu_1(m)$ is homogeneous of degree 1, i.e. 
\begin{equation}\label{homo}\mu_1(\alpha m)= \alpha \mu_1(m) \quad \forall\, \alpha>0.\end{equation}
This follows immediately from \eqref{mu1}.
\medskip

\section{Rearrangements of measurable functions}\label{rearrangements}
In this section we introduce the concept of
rearrangement of a measurable function and summarize some related results we will use in the next section. 
The idea of rearranging a function dates back to the book \cite{hardy52} 
of Hardy, Littlewood and P\'olya, since than many authors have investigated both extensions and 
applications of this notion. Here we relies on the results in \cite{Alvino,B,B89,day70,
Kaw,ryff67}.    

Let $\Omega$ be an open bounded set of $\mathbb{R}^N$.

\begin{definition}
For every measurable 
function $f:\Omega\to\mathbb{R}$ the function $d_f:\mathbb{R}\to [0,|\Omega|]$ defined by
$$d_f(t)=|\{x\in\Omega: f(x)>t\}|$$
is called \emph{distribution function of $f$}.
\end{definition}

The symbol $\mu_f$ is also used. It is easy to prove the
following properties of $d_f$.

\begin{proposition}
For each $f$ the distribution function $d_f$ is decreasing, right continuous and
the following identities hold true
$$\lim_{t\to-\infty}d_f(t)=|\Omega|,\quad\quad \lim_{t\to\infty}d_f(t)=0.$$
\end{proposition}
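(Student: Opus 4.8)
The plan is to derive all four assertions from two elementary facts about Lebesgue measure: its monotonicity with respect to set inclusion, and its continuity along monotone sequences of measurable sets (continuity from below for increasing sequences, and continuity from above for decreasing sequences of finite measure). Throughout I would write $A_t=\{x\in\Omega:f(x)>t\}$, so that $d_f(t)=|A_t|$, and note that each $A_t$ is measurable because $f$ is measurable and that $|A_t|\le|\Omega|<\infty$ since $\Omega$ is bounded.

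First I would establish monotonicity. If $t_1<t_2$, then $f(x)>t_2$ implies $f(x)>t_1$, so $A_{t_2}\subseteq A_{t_1}$; monotonicity of the measure gives $d_f(t_2)=|A_{t_2}|\le|A_{t_1}|=d_f(t_1)$, hence $d_f$ is decreasing. Because $d_f$ is monotone, every one-sided limit exists, which reduces the remaining claims to computing limits along conveniently chosen sequences.

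For right continuity I would fix $t\in\mathbb{R}$ and use the set identity
$$A_t=\bigcup_{n=1}^\infty A_{t+1/n},$$
which holds because $f(x)>t$ if and only if $f(x)>t+1/n$ for some $n$. As $n\to\infty$ the threshold $t+1/n$ decreases to $t$, so the sets $A_{t+1/n}$ increase; continuity from below then yields $d_f(t)=|A_t|=\lim_{n\to\infty}|A_{t+1/n}|=\lim_{n\to\infty}d_f(t+1/n)$. Since $d_f$ is monotone, this limit along the sequence $t+1/n\downarrow t$ coincides with the full right limit $\lim_{s\to t^+}d_f(s)$, giving right continuity.

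Finally I would treat the two limits at infinity with the same technique applied to the integer sequences $t=-n$ and $t=n$. As $n\to\infty$ the sets $A_{-n}$ increase and $\bigcup_n A_{-n}=\{x\in\Omega:f(x)>-\infty\}=\Omega$, so continuity from below gives $\lim_{n\to\infty}d_f(-n)=|\Omega|$, and monotonicity upgrades this to $\lim_{t\to-\infty}d_f(t)=|\Omega|$. Dually, the sets $A_n$ decrease and $\bigcap_n A_n=\{x\in\Omega:f(x)=+\infty\}=\emptyset$ because $f$ is real valued; here continuity from above applies precisely because $|A_1|\le|\Omega|<\infty$, yielding $\lim_{n\to\infty}d_f(n)=0$ and hence $\lim_{t\to\infty}d_f(t)=0$. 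The only point demanding any care is this last step, where the finiteness of $|\Omega|$ is essential for continuity from above; otherwise the argument is routine measure theory with no genuine obstacle.
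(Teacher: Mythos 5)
Your proof is correct and complete; the paper itself gives no proof of this proposition (it only remarks that the properties are ``easy to prove''), and your argument --- monotonicity of Lebesgue measure for the nested superlevel sets $A_{t_2}\subseteq A_{t_1}$, continuity from below for the right continuity and the limit at $-\infty$, and continuity from above for the limit at $+\infty$ --- is precisely the standard argument the authors intend. You also correctly isolate the two genuinely delicate points: that continuity from above requires $|\Omega|<\infty$ (guaranteed here because $\Omega$ is bounded), and that $\bigcap_n A_n=\emptyset$ because $f$ is real-valued.
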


\begin{definition}
Two measurable functions $f,g:\Omega \to \mathbb{R}$ are called \emph{equimeasurable} functions 
or \emph{rearrengements} of one another if one of the following equivalent conditions is satisfied

i)   $|\{x\in \Omega: f(x)>t\}|=|\{x\in \Omega: g(x)>t\}| \quad \forall\, t\in \mathbb{R}$;

ii)  $d_f=d_g$.
\end{definition}

Equimeasurability of $f$ and $g$ is denoted by $f\sim g$. Equimeasurable functions
share global extrema and integrals as it is stated precisely by the following proposition.
\begin{proposition}\label{rospo}
Suppose $f\sim g$ and let $F:\mathbb{R}\to\mathbb{R}$ be a Borel measurable function, then
 
i) $|f|\sim |g|$;

ii) $\esssup f=\esssup g$ and  $\essinf f=\essinf g$;

iii) $F\circ f\sim F\circ g$;

iv) $F\circ f\in L^1(\Omega)$ implies $F\circ g\in L^1(\Omega)$ and $\int_\Omega F\circ f\,dx=
\int_\Omega F\circ g\,dx$.

\end{proposition}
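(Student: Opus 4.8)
The plan is to route all four claims through a single object: the pushforward of Lebesgue measure under $f$ and under $g$. For a measurable $h:\Omega\to\mathbb{R}$ I would introduce the finite Borel measure $\nu_h$ on $\mathbb{R}$ defined by $\nu_h(B)=|h^{-1}(B)|$ for every Borel set $B\subseteq\mathbb{R}$ (well defined since $h$ pulls Borel sets back to measurable sets), with total mass $\nu_h(\mathbb{R})=|\Omega|$. The point is that the hypothesis $f\sim g$ is exactly the assertion that $\nu_f$ and $\nu_g$ agree on every half-line, because $\nu_h\big((t,\infty)\big)=|\{h>t\}|=d_h(t)$ and $d_f=d_g$ by assumption.

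Step 1, which I expect to be the only genuine obstacle, is to upgrade this to $\nu_f=\nu_g$ on the whole Borel $\sigma$-algebra. The half-lines $\{(t,\infty):t\in\mathbb{R}\}$ form a $\pi$-system (they are closed under finite intersection) generating $\mathcal{B}(\mathbb{R})$, and $\nu_f,\nu_g$ are finite measures with the same total mass $|\Omega|$; hence the standard uniqueness theorem for measures coinciding on a generating $\pi$-system (Dynkin's $\pi$--$\lambda$ theorem) forces $\nu_f=\nu_g$. This is the heart of the matter; everything else is formal bookkeeping built on top of it.

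Granting Step 1, part iii) is immediate: for Borel $F$ and any $t\in\mathbb{R}$ one has $\{F\circ f>t\}=f^{-1}\big(F^{-1}((t,\infty))\big)$, and since $F^{-1}((t,\infty))$ is a Borel set, $|\{F\circ f>t\}|=\nu_f\big(F^{-1}((t,\infty))\big)=\nu_g\big(F^{-1}((t,\infty))\big)=|\{F\circ g>t\}|$, i.e.\ $F\circ f\sim F\circ g$. Part i) then follows at once by taking $F(s)=|s|$, which is continuous, hence Borel.

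For part ii) I would argue directly from the common distribution function: $\esssup f=\inf\{t:d_f(t)=0\}$, and, writing $d_f(t^-)=\lim_{s\to t^-}d_f(s)=|\{f\geq t\}|$ for the left limit, also $\essinf f=\sup\{t:d_f(t^-)=|\Omega|\}$; since $d_f=d_g$ forces equality of both the values and the left limits, the extrema coincide (alternatively one checks $-f\sim -g$ and uses $\essinf f=-\esssup(-f)$). Finally, for part iv) I would invoke the change-of-variables formula for pushforwards, $\int_\Omega F\circ h\,dx=\int_{\mathbb{R}}F\,d\nu_h$: applying it first to $|F|$ gives $\int_\Omega|F\circ f|\,dx=\int_{\mathbb{R}}|F|\,d\nu_f=\int_{\mathbb{R}}|F|\,d\nu_g=\int_\Omega|F\circ g|\,dx$ by Step 1, so $F\circ f\in L^1(\Omega)$ forces $F\circ g\in L^1(\Omega)$; the same identity applied to $F$ itself (now legitimate, the integrals being absolutely convergent) yields the equality of the two integrals. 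If one prefers to avoid the pushforward formula, the layer-cake identity $\int_\Omega h^+\,dx=\int_0^\infty d_h(t)\,dt$ combined with part iii) applied to $F^+$ and $F^-$ gives the same conclusion.
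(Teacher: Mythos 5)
Your proposal is correct, but there is nothing in the paper to compare it against step by step: the paper does not prove Proposition \ref{rospo} at all, it simply cites Day's thesis and Burton's Lemma 2.1. Your pushforward argument is essentially the standard proof underlying those references, and it is complete as written. The one genuinely non-formal point, upgrading agreement of $\nu_f$ and $\nu_g$ on half-lines to agreement on all of $\mathcal{B}(\mathbb{R})$, is handled properly: the half-lines $(t,\infty)$ form a $\pi$-system generating $\mathcal{B}(\mathbb{R})$, and you correctly supply the needed normalization $\nu_f(\mathbb{R})=\nu_g(\mathbb{R})=|\Omega|<\infty$ (equivalently, $\mathbb{R}=\bigcup_n(-n,\infty)$ exhausts $\mathbb{R}$ by sets of the $\pi$-system with finite measure), so Dynkin's theorem applies. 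The remaining deductions are sound: iii) follows since $F^{-1}((t,\infty))$ is Borel and $F\circ f$ is Lebesgue measurable exactly because $F$ is Borel (this is where the Borel hypothesis is genuinely used); i) is the special case $F(s)=|s|$; your formulas for ii), including the left-limit identity $d_f(t^-)=|\{f\geq t\}|$ via downward continuity of a finite measure, survive the degenerate cases $\esssup f=+\infty$, $\essinf f=-\infty$ with the usual conventions; and iv) via the change-of-variables formula applied first to $|F|$ and then to $F$ is the textbook route (your layer-cake fallback also works). If anything, your argument buys slightly more than the cited statements, since it establishes equality of the full pushforward measures, of which i)--iv) are corollaries; what the paper buys by citation is only brevity.
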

For a proof see, for example, \cite[Proposition 3.3]{day70} or \cite[Lemma 2.1]{B89}.

In particular, for each $1\leq p\leq\infty$, if $f\in L^p(\Omega)$ and $f\sim g$ then
$g\in L^p(\Omega)$ and 
\begin{equation*}\label{equi}
 \|f\|_{L^p(\Omega)}=\|g\|_{L^p(\Omega)}.
\end{equation*}

\begin{definition}
For every measurable function $f:\Omega\to\mathbb{R}$ the function $f^*:(0,|\Omega|)\to
\mathbb{R}$ defined by
$$f^*(s)=\sup\{t\in\mathbb{R}: d_f(t)>s\}$$
is called \emph{decreasing rearrangement of $f$}.
\end{definition}
An equivalent definition (used by some authors) is $f^*(s)=\inf\{t\in\mathbb{R}: d_f(t)\leq
s\}$. 

\begin{proposition} \label{furbi}
For each $f$ its decreasing rearrangement $f^*$ is decreasing, right continuous and
we have 
$$\lim_{s\to 0}f^*(s)=\esssup f\quad\text{and}\quad\lim_{s\to|\Omega|}f^*(s)=\essinf f.$$
Moreover, if $F:\mathbb{R}\to\mathbb{R}$ is a Borel measurable function then
$F\circ f\in L^1(\Omega)$ implies $F\circ f^*\in L^1(0,|\Omega|)$ and 
$$\int_\Omega F\circ f\,dx=\int_0^{|\Omega|} F\circ f^*\,ds.$$ 
Finally, $d_{f^*}=d_f$ and, for each measurable function $g$ we have $f\sim g$ if and only if $f^*=g^*$.
\end{proposition}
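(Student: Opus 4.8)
The plan is to handle monotonicity and the limit relations by direct manipulation of the defining supremum $f^*(s)=\sup\{t:d_f(t)>s\}$, and then to reduce both the integral identity and the final equivalence to the single key fact $d_{f^*}=d_f$. First I would establish monotonicity: if $s_1<s_2$ then $\{t:d_f(t)>s_2\}\subset\{t:d_f(t)>s_1\}$, so taking suprema gives $f^*(s_2)\le f^*(s_1)$; hence $f^*$ is decreasing and all its one-sided limits exist. For right continuity at a point $s_0$, monotonicity already yields $\lim_{s\to s_0^+}f^*(s)\le f^*(s_0)$, and for the reverse inequality, given any $t<f^*(s_0)$ I would pick $t'>t$ with $d_f(t')>s_0$ and observe that $d_f(t')>s$ for all $s$ slightly larger than $s_0$, whence $f^*(s)\ge t'>t$, so the right limit is at least $f^*(s_0)$.

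For the two limits I would use the elementary identities $\esssup f=\sup\{t:d_f(t)>0\}$ and $\essinf f=\inf\{t:d_f(t)<|\Omega|\}$. Then $\lim_{s\to0^+}f^*(s)=\esssup f$ follows by squeezing: $f^*(s)\le\esssup f$ for every $s>0$, while for any $t<\esssup f$ one has $d_f(t)>0$, so $f^*(s)\ge t$ whenever $s<d_f(t)$. The limit at $|\Omega|$ is entirely symmetric, using that $d_f(t)=|\Omega|$ for $t<\essinf f$ and $d_f(t)<|\Omega|$ for $t>\essinf f$.

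The crux is the equimeasurability $d_{f^*}=d_f$, for which I would prove the set identity $\{s\in(0,|\Omega|):f^*(s)>t\}=(0,d_f(t))$. Indeed, $f^*(s)>t$ is equivalent to the existence of some $t'>t$ with $d_f(t')>s$, i.e. to $s<\sup_{t'>t}d_f(t')$; and by the right continuity of $d_f$ this supremum equals $\lim_{t'\to t^+}d_f(t')=d_f(t)$. Reading off the Lebesgue measure of this interval gives $d_{f^*}(t)=d_f(t)$ for every $t$. The invocation of right continuity of $d_f$ is the one step that genuinely must be handled with care, since it is exactly what forces the endpoint of the interval to be $d_f(t)$ itself rather than a one-sided limit; this is the main obstacle.

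Finally, everything else is a corollary of $d_{f^*}=d_f$. Since $f$ and $f^*$ share the same distribution function (and $\Omega$ and $(0,|\Omega|)$ have the same total measure), the equimeasurability argument of Proposition \ref{rospo} applies verbatim, yielding $F\circ f\in L^1(\Omega)\Rightarrow F\circ f^*\in L^1(0,|\Omega|)$ together with equality of the two integrals. For the last assertion, $f^*$ is constructed from $d_f$ alone, so $f\sim g$ (that is, $d_f=d_g$) immediately forces $f^*=g^*$; conversely, $f^*=g^*$ gives $d_f=d_{f^*}=d_{g^*}=d_g$, hence $f\sim g$.
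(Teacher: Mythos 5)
Your proof is correct and complete. There is in fact no in-paper argument to compare it with: the paper proves nothing here, remarking only that some of the claims are simple consequences of the definition and deferring to \cite[Chapter 2]{day70}; your write-up is, in effect, the proof the paper outsources, and it follows the classical route of that reference. The structure is exactly right: monotonicity and right continuity of $f^*$ directly from the defining supremum, the endpoint limits by squeezing against $\esssup f=\sup\{t:d_f(t)>0\}$ and $\essinf f=\inf\{t:d_f(t)<|\Omega|\}$, and everything else reduced to the single identity $\{s\in(0,|\Omega|):f^*(s)>t\}=\bigl(0,d_f(t)\bigr)$, where you correctly pinpoint that the right continuity of $d_f$ (recorded in the paper's preceding proposition) is what forces $\sup_{t'>t}d_f(t')=d_f(t)$, so that the interval ends at $d_f(t)$ on the nose and $d_{f^*}=d_f$ follows by reading off Lebesgue measure. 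Two minor points are worth making explicit in a final write-up. First, Proposition \ref{rospo} is stated for two functions on the same domain $\Omega$, while $f^*$ lives on $(0,|\Omega|)$, so it cannot be cited literally; your parenthetical observation is the correct fix, and can be formalized by noting that the pushforwards of Lebesgue measure under $f$ and under $f^*$ agree on all half-lines $(t,+\infty)$, hence agree as finite Borel measures on $\mathbb{R}$, after which the integral identity is the change-of-variables formula. Second, for $s\in(0,|\Omega|)$ the set $\{t:d_f(t)>s\}$ is nonempty and bounded above, because $d_f(t)\to|\Omega|$ as $t\to-\infty$ and $d_f(t)\to 0$ as $t\to+\infty$; this guarantees $f^*(s)$ is finite and legitimizes your manipulations of the supremum, while the two endpoint limits may be $\pm\infty$ for unbounded $f$, consistently with the statement.
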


Some of the previous claims are simple consequences of the definition of $f^*$, for
more details see \cite[Chapter 2]{day70}. 

As before, it follows that, for each $1\leq p\leq\infty$, if $f\in L^p(\Omega)$ then
$f^*\in L^p(0,|\Omega|)$ and $\|f\|_{L^p(\Omega)}=\|f^*\|_{L^p(0,|\Omega|)}.$
 
\begin{definition}\label{prec1}
Given two functions $f,g\in L^1(\Omega)$, we write $g\prec f$ if
$$\int_0^t g^*\,ds\leq \int_0^t f^*\,ds\quad\forall\; 0\leq t\leq|\Omega|\quad\quad
{and}\quad\quad\int_0^{|\Omega|} g^*\,ds= \int_0^{|\Omega|} f^*\,ds.$$ 
\end{definition}
Note that $g\sim f$ if and only if $g\prec f$ and $f\prec g$. Among many properties of the relation
$\prec$ we mention the following (a proof is in \cite[Lemma 8.2]{day70}).

\begin{proposition}\label{prec}
For any pair of functions $f,g\in L^1(\Omega)$ and real numbers $\alpha$ and $\beta$,
if $\alpha\leq f\leq\beta$ a.e. in $\Omega$ and $g\prec f$ then $\alpha\leq g\leq\beta$ a.e. in $\Omega$.
\end{proposition}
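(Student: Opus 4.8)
The plan is to reduce the a.e.\ bounds on $f$ and $g$ to pointwise bounds on their decreasing rearrangements $f^*$ and $g^*$, and then to exploit the two defining conditions of the relation $\prec$ separately: the pointwise integral inequality will deliver the upper bound $g\le\beta$, while the equality of the total integrals will be essential for the lower bound $g\ge\alpha$.

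First I would record the elementary equivalence, which rests on Proposition \ref{furbi}: since $f^*$ is decreasing with $\lim_{s\to 0}f^*(s)=\esssup f$ and $\lim_{s\to|\Omega|}f^*(s)=\essinf f$, the hypothesis $\alpha\le f\le\beta$ a.e.\ is equivalent to $\alpha\le f^*(s)\le\beta$ for every $s\in(0,|\Omega|)$, and the analogous equivalence holds for $g$. Thus the assumption gives $\alpha\le f^*\le\beta$ throughout $(0,|\Omega|)$, and the goal reduces to showing $\alpha\le g^*\le\beta$ on $(0,|\Omega|)$.

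For the upper bound I would argue by contradiction. Suppose $g^*(s_0)>\beta$ for some $s_0\in(0,|\Omega|)$. Since $g^*$ is decreasing, $g^*(s)\ge g^*(s_0)>\beta$ for all $s\in(0,s_0]$, whence $\int_0^{s_0}g^*\,ds\ge g^*(s_0)\,s_0>\beta s_0$. On the other hand $f^*\le\beta$ forces $\int_0^{s_0}f^*\,ds\le\beta s_0$, so $\int_0^{s_0}g^*\,ds>\int_0^{s_0}f^*\,ds$, contradicting the inequality in the definition of $g\prec f$. Hence $g^*\le\beta$ everywhere. For the lower bound I would first pass to the complementary tails: combining the equality $\int_0^{|\Omega|}g^*\,ds=\int_0^{|\Omega|}f^*\,ds$ with the inequality $\int_0^{s_0}g^*\,ds\le\int_0^{s_0}f^*\,ds$ yields $\int_{s_0}^{|\Omega|}g^*\,ds\ge\int_{s_0}^{|\Omega|}f^*\,ds$ for every $s_0\in(0,|\Omega|)$. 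Now suppose $g^*(s_0)<\alpha$; by monotonicity $g^*(s)\le g^*(s_0)<\alpha$ for $s\in[s_0,|\Omega|)$, so $\int_{s_0}^{|\Omega|}g^*\,ds<\alpha(|\Omega|-s_0)$, while $f^*\ge\alpha$ gives $\int_{s_0}^{|\Omega|}f^*\,ds\ge\alpha(|\Omega|-s_0)$, contradicting the tail inequality. Hence $g^*\ge\alpha$ everywhere, and together with the previous step this yields $\alpha\le g\le\beta$ a.e.

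I expect the only genuine subtlety to be the lower bound: unlike the upper bound, it cannot be obtained from the pointwise integral inequality alone and really requires the equality of total integrals in Definition \ref{prec1}, which I convert into the reversed inequality for the tails $\int_{s_0}^{|\Omega|}$. The passage between a.e.\ bounds and pointwise bounds on the rearrangements is routine once the monotonicity and the boundary limits of $f^*$ from Proposition \ref{furbi} are invoked.
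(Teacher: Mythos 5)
Your proof is correct. Note that the paper does not give its own argument for this proposition: it simply refers to \cite[Lemma 8.2]{day70}, so there is no in-paper proof to match, and a self-contained derivation is precisely what a citation-free version of the paper would need. Your argument is the standard majorization reasoning and uses only what the paper has at hand: the equivalence between $\alpha\leq f\leq\beta$ a.e.\ and $\alpha\leq f^*\leq\beta$ on $(0,|\Omega|)$ is legitimate, since Proposition \ref{furbi} supplies both the monotonicity with the boundary limits $\lim_{s\to 0}f^*(s)=\esssup f$, $\lim_{s\to|\Omega|}f^*(s)=\essinf f$ and the equimeasurability $d_{f^*}=d_f$ (and likewise for $g$), which is what lets you transfer the bounds back from $g^*$ to $g$. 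The two contradiction steps are sound: for the upper bound, monotonicity of $g^*$ gives $\int_0^{s_0}g^*\,ds\geq g^*(s_0)\,s_0>\beta s_0\geq\int_0^{s_0}f^*\,ds$, violating the partial-integral inequality in Definition \ref{prec1}; for the lower bound you correctly observe the asymmetry, namely that the partial inequalities alone cannot prevent $g$ from dipping below $\alpha$, and you convert the total-integral equality into the reversed tail inequality $\int_{s_0}^{|\Omega|}g^*\,ds\geq\int_{s_0}^{|\Omega|}f^*\,ds$, which then clashes with $g^*(s_0)<\alpha$ and $f^*\geq\alpha$. All integrals involved are finite because $f,g\in L^1(\Omega)$ implies $f^*,g^*\in L^1(0,|\Omega|)$ by Proposition \ref{furbi}, so no step degenerates. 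I see no gap.
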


\begin{proposition}\label{prec2}
For $f\in L^1(\Omega)$ let $g=\frac{1}{|\Omega|}\int_\Omega f \, dx$. Then we have 
$g\prec f$.
\end{proposition}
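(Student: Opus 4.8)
The plan is to reduce the claim to a concavity statement about the map $t\mapsto\int_0^t f^*\,ds$. First I would compute the decreasing rearrangement of the constant function $g\equiv\bar f$, where $\bar f:=\frac{1}{|\Omega|}\int_\Omega f\,dx$. Since the distribution function $d_g$ takes the value $|\Omega|$ for $t<\bar f$ and $0$ for $t\geq\bar f$, the definition of the decreasing rearrangement gives $g^*(s)=\bar f$ for every $s\in(0,|\Omega|)$. Consequently $\int_0^t g^*\,ds=\bar f\,t=\frac{t}{|\Omega|}\int_\Omega f\,dx$ for all $t\in[0,|\Omega|]$.

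Next I would dispatch the equality condition in Definition \ref{prec1}. Applying Proposition \ref{furbi} with $F$ the identity map (which is legitimate because $f\in L^1(\Omega)$) yields $\int_0^{|\Omega|}f^*\,ds=\int_\Omega f\,dx$. Evaluating the formula from the previous paragraph at $t=|\Omega|$ gives $\int_0^{|\Omega|}g^*\,ds=\int_\Omega f\,dx$ as well, so the two full integrals coincide, as required.

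It remains to verify the inequality $\int_0^t g^*\,ds\leq\int_0^t f^*\,ds$ for every $t\in[0,|\Omega|]$. Here I would set $\Phi(t)=\int_0^t f^*\,ds$. Since $f\in L^1(\Omega)$ forces $f^*\in L^1(0,|\Omega|)$, the function $\Phi$ is absolutely continuous on $[0,|\Omega|]$ with $\Phi'=f^*$ almost everywhere; because $f^*$ is decreasing (Proposition \ref{furbi}), $\Phi$ is concave. On the other hand, the map $t\mapsto\int_0^t g^*\,ds=\frac{t}{|\Omega|}\,\Phi(|\Omega|)$ is exactly the chord of $\Phi$ joining the endpoints $(0,\Phi(0))=(0,0)$ and $(|\Omega|,\Phi(|\Omega|))$. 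A concave function lies on or above each of its chords, so $\Phi(t)\geq\frac{t}{|\Omega|}\,\Phi(|\Omega|)$ for all $t\in[0,|\Omega|]$, which is precisely the inequality needed. Combining the two conditions gives $g\prec f$.

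The argument is essentially routine, and I expect no serious obstacle; the only point meriting a little care is the concavity of $t\mapsto\int_0^t f^*\,ds$, which I would obtain directly from the monotonicity of $f^*$ (via $\Phi'=f^*$ being decreasing) rather than through any second-order computation. Everything else is bookkeeping with the two clauses defining the relation $\prec$.
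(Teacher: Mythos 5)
Your proof is correct. Note that the paper states Proposition \ref{prec2} without any proof (it appears among the background facts on rearrangements, with the neighbouring results referred to Day's thesis), so there is no paper argument to compare against; your chord-of-a-concave-function argument is the standard one and is complete. All three steps check out: $g^*\equiv\bar f$ on $(0,|\Omega|)$ follows directly from the definition of the decreasing rearrangement; the equality $\int_0^{|\Omega|}g^*\,ds=\int_0^{|\Omega|}f^*\,ds$ follows from Proposition \ref{furbi} with $F=\mathrm{id}$ (and $f^*\in L^1(0,|\Omega|)$, needed for absolute continuity of $\Phi$, likewise follows from Proposition \ref{furbi} applied with $F(t)=|t|$); and the inequality $\bar f\,t\leq\Phi(t)=\int_0^t f^*\,ds$ is exactly the statement that the concave function $\Phi$ lies above its chord through $(0,0)$ and $(|\Omega|,\Phi(|\Omega|))$, which, as you observe, is equivalent to the monotonicity in $t$ of the averages $\frac{1}{t}\int_0^t f^*\,ds$ of the decreasing function $f^*$.
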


\begin{definition}\label{class} 
Let $f:\Omega\to\mathbb{R}$ a measurable function. We call the set
$$\mathcal{G}(f)=\{g:\Omega\to\mathbb{R}: g \text{ is measurable and } g\sim f \}$$
\emph{class of rearrangements of $f$} or \emph{set of rearrangements of $f$}.
\end{definition}

Note that, for $1\leq p\leq\infty$, if $f$ is in $L^p(\Omega)$ then $\mathcal{G}(f)$ is
contained in $L^p(\Omega)$.

As we will see in the next section, we are interested in the optimization of a functional 
defined on a class of rearrangements $\mathcal{G}(m_0)$,
where $m_0$ belongs to $L^\infty(\Omega)$. For this reason, although almost all of what follows
holds in a much more general context, hereafter we restrict our attention to classes of rearrangements of
functions in $L^\infty(\Omega)$.
We need compactness properties of the set $\mathcal{G}(m_0)$, with a little
effort it can be showed that this set is closed but in general it is not compact in the norm topology
of $L^\infty(\Omega)$. Therefore we focus our attention on the weak* compactness.
By $\overline{\mathcal{G}(m_0)}$ we denote the closure of $\mathcal{G}(m_0)$ in the weak* topology
of $L^\infty(\Omega)$.

\begin{proposition}\label{cane}
 Let $m_0$ be a function of $L^\infty(\Omega)$. Then $\overline{\mathcal{G}(m_0)}$ is
 
 i)  weakly* compact;
 
 ii) metrizable in the weak* topology;
 
 iii) sequentially weakly* compact. 
\end{proposition}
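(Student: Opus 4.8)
The plan is to realize $\overline{\mathcal{G}(m_0)}$ as a weak*-closed subset of a ball in $L^\infty(\Omega)$ and then invoke standard functional-analytic facts, the only genuine input from the theory of rearrangements being a uniform $L^\infty$ bound.

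First I would establish that bound. By i) and ii) of Proposition \ref{rospo}, every $g\in\mathcal{G}(m_0)$ is equimeasurable with $m_0$, whence $\essinf m_0\le g\le\esssup m_0$ a.e. in $\Omega$ and, in particular, $\|g\|_{L^\infty(\Omega)}=\|m_0\|_{L^\infty(\Omega)}=:R$ (this is exactly the norm-preservation property recorded after Proposition \ref{rospo} for $p=\infty$). Consequently $\mathcal{G}(m_0)$ is contained in the closed ball $B_R=\{h\in L^\infty(\Omega):\|h\|_{L^\infty(\Omega)}\le R\}$. Identifying $L^\infty(\Omega)$ with the dual $(L^1(\Omega))^*$, the Banach--Alaoglu theorem gives that $B_R$ is weak* compact, and in particular weak* closed. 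Since $\mathcal{G}(m_0)\subseteq B_R$, passing to the weak* closure yields $\overline{\mathcal{G}(m_0)}\subseteq B_R$. A weak*-closed subset of a weak*-compact set is itself weak* compact, which proves i).

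For ii) I would use that $\Omega$ is a bounded open subset of $\mathbb{R}^N$, so the predual $L^1(\Omega)$ is separable. It is classical that the weak* topology on any norm-bounded subset of the dual of a separable Banach space is metrizable: fixing a sequence $\{e_k\}$ dense in the unit ball of $L^1(\Omega)$, the function $d(\phi,\psi)=\sum_k 2^{-k}|\langle\phi-\psi,e_k\rangle|$ is a metric inducing the weak* topology on $B_R$. Restricting $d$ to $\overline{\mathcal{G}(m_0)}\subseteq B_R$ gives ii). Finally, iii) is immediate by combining i) and ii): $\overline{\mathcal{G}(m_0)}$ is a compact metric space, hence sequentially compact, that is, sequentially weak* compact.

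I do not expect a substantial obstacle here: once the uniform bound $\|g\|_{L^\infty(\Omega)}=\|m_0\|_{L^\infty(\Omega)}$ from equimeasurability is in place, the statement reduces entirely to Banach--Alaoglu together with the metrizability of bounded sets in the dual of a separable space. The only points requiring a little care are verifying that the weak* closure stays inside $B_R$ (which follows because $B_R$ is itself weak* closed) and recalling the separability of $L^1(\Omega)$ for a bounded $\Omega$.
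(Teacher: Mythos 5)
Your proposal is correct, and it is essentially the argument behind the result the paper invokes by citation (\cite[Proposition 3.6]{ACF}, the paper itself gives no inline proof): the uniform bound $\|g\|_{L^\infty(\Omega)}=\|m_0\|_{L^\infty(\Omega)}$ from equimeasurability places $\mathcal{G}(m_0)$ in a ball, Banach--Alaoglu gives weak* compactness of the closure, separability of $L^1(\Omega)$ gives metrizability on bounded sets, and sequential compactness follows. No gaps; the two points you flag as needing care (the closure staying in $B_R$ and the separability of the predual) are exactly the right ones and are handled correctly.
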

For the proof see \cite[Proposition 3.6]{ACF}. 

Moreover, the sets $\mathcal{G}(m_0)$ and $\overline{\mathcal{G}(m_0)}$ have further
properties. 

\begin{definition}\label{libellula}
Let $C$ be a convex set of a real vector space. An element $v$ in $C$ is said an \emph{extreme point of
$C$} if for every $u$ and $w$ in $C$ the identity $v=\frac{u+w}{2}$ implies $u=w$.
\end{definition}
A vertex of a convex polygon is an example of extreme point.

\begin{proposition}\label{convexity}
Let $m_0$ be a function of $L^\infty(\Omega)$, then

i) $\overline{\mathcal{G}(m_0)}=\{f\in L^\infty(\Omega): f\prec m_0\}$, 

ii) $\overline{\mathcal{G}(m_0)}$ is convex,

iii) $\mathcal{G}(m_0)$ is the set of the extreme points of $\overline{\mathcal{G}(m_0)}$.
\end{proposition}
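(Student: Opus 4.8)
The plan is to organise the whole proof around the order interval $K:=\{f\in L^\infty(\Omega):f\prec m_0\}$: I will first show that $K$ is convex and weak* compact, then identify it with $\overline{\mathcal{G}(m_0)}$, and finally determine its extreme points. The starting tool is the Hardy--Littlewood maximal identity $\int_0^t g^*\,ds=\sup\{\int_E g\,dx:|E|=t\}$ (see \cite{hardy52,day70}). Since $\Omega$ is bounded, $\chi_E\in L^1(\Omega)$, so each functional $g\mapsto\int_E g\,dx$ is weak* continuous; hence $g\mapsto\int_0^t g^*\,ds$, being a supremum of weak* continuous \emph{linear} functionals, is simultaneously convex and weak* lower semicontinuous. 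Consequently every sublevel set $\{g:\int_0^t g^*\,ds\le\int_0^t m_0^*\,ds\}$ is weak* closed, and the total-mass constraint of Definition \ref{prec1} is weak* closed as well (the functional $g\mapsto\int_\Omega g\,dx$ being weak* continuous). Intersecting over $t$ shows that $K$ is weak* closed and convex. By Proposition \ref{prec} applied with $\alpha=\essinf m_0$ and $\beta=\esssup m_0$, every $f\in K$ obeys $\alpha\le f\le\beta$ a.e., so $K$ is norm-bounded and therefore weak* compact.

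For part i), the inclusion $\overline{\mathcal{G}(m_0)}\subseteq K$ is immediate: if $g\sim m_0$ then $g^*=m_0^*$ by Proposition \ref{furbi}, whence $g\prec m_0$, so $\mathcal{G}(m_0)\subseteq K$, and weak* closedness of $K$ gives the inclusion for the closure. The reverse inclusion $K\subseteq\overline{\mathcal{G}(m_0)}$ is the core of the statement, and I expect it to be \textbf{the main obstacle}: given $f\prec m_0$ I must construct $g_n\in\mathcal{G}(m_0)$ with $g_n\rightharpoonup f$ weak*. The plan is to reduce, via Proposition \ref{rospo} and an approximation by simple functions, to the model case in which $m_0$ is two-valued; there the assertion is the classical fact that characteristic functions of sets of a prescribed measure are weak* dense in $\{0\le h\le 1:\int_\Omega h\,dx=c\}$, realised by letting the sets oscillate more and more finely so that their averages tend to $f$. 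The general bounded case is then assembled by a layer-cake decomposition. This density is precisely Ryff's theorem \cite{ryff67} (see also \cite[Theorem 2.2]{B89} and \cite{day70}), which I would cite after the reduction. Once $\overline{\mathcal{G}(m_0)}=K$ is established, part ii) is free, since $K$ was shown to be convex.

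For part iii) I would treat the two inclusions separately. To show that each $g\in\mathcal{G}(m_0)$ is an extreme point of $K$, I use the strictly convex test functional $\Phi(h)=\int_\Omega h^2\,dx$: the Hardy--Littlewood--P\'olya inequality (a standard consequence of $\prec$, see \cite{day70,hardy52}) gives $\Phi(h)\le\Phi(m_0)$ for all $h\in K$, with equality when $h\sim m_0$ by Proposition \ref{rospo}. Hence $\Phi$ attains its maximum over $K$ exactly on $\mathcal{G}(m_0)$. If $g\sim m_0$ and $g=(u+w)/2$ with $u,w\in K$, then pointwise strict convexity yields $\Phi(g)\le\tfrac12\Phi(u)+\tfrac12\Phi(w)\le\Phi(m_0)=\Phi(g)$; the equalities force $\Phi(u)=\Phi(w)=\Phi(m_0)$ and, in the pointwise estimate, $u=w$ a.e., so $g$ is extreme in the sense of Definition \ref{libellula}. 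Conversely, if $g\in K\setminus\mathcal{G}(m_0)$ then $g^*\ne m_0^*$, which leaves room for a perturbation: I would find disjoint sets of equal positive measure on which a mass-preserving transfer produces a nonzero $\eta\in L^\infty(\Omega)$ with $g\pm\eta\prec m_0$, exhibiting $g=\tfrac12\big((g+\eta)+(g-\eta)\big)$ as a nontrivial midpoint and hence as a non-extreme point. The delicate ingredients are therefore the density step in i) and the explicit construction of $\eta$ in the converse of iii); both are supplied by the rearrangement machinery of \cite{ryff67,B89,day70}, while the remaining estimates are routine.
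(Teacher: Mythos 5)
Your proposal is correct, and it takes a genuinely different (much more self-contained) route than the paper, whose entire proof of this proposition is a single citation to Day's thesis, \cite[Theorems 22.13, 22.2, 17.4, 20.3]{day70}. Several of your steps are complete proofs rather than citations: the weak* lower semicontinuity of $g\mapsto\int_0^t g^*\,ds$ via the Hardy--Littlewood identity $\int_0^t g^*\,ds=\sup\{\int_E g\,dx:|E|=t\}$, which together with the weak* continuity of $g\mapsto\int_\Omega g\,dx$ gives that $K=\{f\prec m_0\}$ is weak* closed and convex, and Proposition \ref{prec} plus Banach--Alaoglu gives weak* compactness; and the forward half of iii), where your argument with $\Phi(h)=\int_\Omega h^2\,dx$ is clean and complete --- the chain $\Phi(g)\le\tfrac12\Phi(u)+\tfrac12\Phi(w)\le\Phi(m_0)=\Phi(g)$ forces $\int_\Omega\tfrac14(u-w)^2\,dx=0$, so you avoid invoking the equality case of the Hardy--Littlewood--P\'olya convexity inequality (only the inequality itself, standard for $\prec$, is needed). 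The two genuinely hard ingredients --- the density $K\subseteq\overline{\mathcal{G}(m_0)}$ in i) and the non-extremeness of points of $K\smallsetminus\mathcal{G}(m_0)$ in iii) --- you ultimately outsource to \cite{ryff67,B89,day70}, i.e.\ the same literature the paper cites, which is legitimate; but be aware that your sketched reduction for the density (two-valued case plus layer-cake assembly) would not go through on its own, since weak* limits do not commute with layer-cake superpositions unless the approximating rearrangements at different levels are aligned by a common measure-preserving structure, which is exactly the content of Ryff's orbit theorem (that density statement is in Ryff's 1965 Transactions paper and in Day, not in the 1967 extreme-points paper, though the latter is the only Ryff entry in this bibliography). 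In sum: your version buys transparency --- honest proofs of compactness, convexity, ii) as a corollary of i), and one direction of iii) --- while the paper buys brevity by citing all four facts wholesale; neither gains generality over the other.
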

\begin{proof}
The claims follow from \cite[Theorems 22.13, 22.2, 17.4, 20.3]{day70}.  
\end{proof}

An evident consequence of the previous theorem is that $\overline{\mathcal{G}(m_0)}$ is
the weakly* closed convex hull of $\mathcal{G}(m_0)$.
\begin{corollary}\label{minore}
 Let $m_0\in L^\infty(\Omega)$ and $m,q\in\overline{\mathcal{G}(m_0)}$. Then\\
 i) $\int_\Omega m\,dx=\int_\Omega m_0\,dx$;\\
 ii) assuming $\int_\Omega m\,dx\neq 0$, $m=q$ if and only if $m$ and $q$ are linearly dependent.
\end{corollary}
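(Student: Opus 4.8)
The plan is to extract everything from the characterization $\overline{\mathcal{G}(m_0)}=\{f\in L^\infty(\Omega):f\prec m_0\}$ furnished by Proposition \ref{convexity}(i), together with the integral-preservation encoded in the definition of the relation $\prec$. Both parts reduce to bookkeeping once this characterization is in hand.

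First I would prove (i). Since $m\in\overline{\mathcal{G}(m_0)}$, Proposition \ref{convexity}(i) yields $m\prec m_0$. By Definition \ref{prec1}, the relation $\prec$ carries, besides the family of inequalities, the equality $\int_0^{|\Omega|}m^*\,ds=\int_0^{|\Omega|}m_0^*\,ds$ (the borderline case $t=|\Omega|$). Applying Proposition \ref{furbi} with $F$ the identity gives $\int_\Omega f\,dx=\int_0^{|\Omega|}f^*\,ds$ for every $f\in L^1(\Omega)$; hence $\int_\Omega m\,dx=\int_0^{|\Omega|}m^*\,ds=\int_0^{|\Omega|}m_0^*\,ds=\int_\Omega m_0\,dx$, which is exactly (i).

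Next I would treat (ii). The implication $m=q\Rightarrow$ linear dependence is immediate (take coefficients $1$ and $-1$). For the converse I would assume $m$ and $q$ are linearly dependent, so $\alpha m+\beta q=0$ a.e. with $(\alpha,\beta)\neq(0,0)$. By part (i), both $m$ and $q$ have integral equal to $\int_\Omega m_0\,dx=\int_\Omega m\,dx\neq0$, so neither function vanishes identically; this forces $\alpha\neq0$ and $\beta\neq0$, whence $q=c\,m$ with $c=-\alpha/\beta\neq0$. Integrating this relation and invoking (i) once more, $c\int_\Omega m\,dx=\int_\Omega q\,dx=\int_\Omega m\,dx$, and since $\int_\Omega m\,dx\neq0$ I conclude $c=1$, i.e. $m=q$.

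The argument is routine, and the only point requiring any care is the handling of the degenerate cases in the linear-dependence hypothesis: I must use part (i) to exclude that either function is zero almost everywhere. The standing assumption $\int_\Omega m\,dx\neq0$ is precisely what makes this exclusion possible and what pins down the scalar $c$ to be $1$.
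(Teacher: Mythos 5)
Your proof is correct and follows essentially the same route as the paper: part (i) via the characterization $\overline{\mathcal{G}(m_0)}=\{f\in L^\infty(\Omega):f\prec m_0\}$, the $t=|\Omega|$ equality in the definition of $\prec$, and the identity $\int_\Omega f\,dx=\int_0^{|\Omega|}f^*\,ds$; part (ii) by writing one function as a scalar multiple of the other, integrating, and using (i) together with $\int_\Omega m\,dx\neq 0$ to force the scalar to equal $1$. Your only addition is to spell out the justification for the paper's ``without loss of generality $m=\alpha q$'' step (excluding a vanishing coefficient via the nonzero integrals), a detail the paper leaves implicit.
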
        
\begin{proof}
i) It folllows immediately by i) Proposition \ref{convexity}, Definition \ref{prec1} and Proposition 
 \ref{furbi}.\\
 ii) If $m$ and $q$ are linearly dependent, then, without loss of generality we can assume that
 $m=\alpha q$, for some $\alpha\in\mathbb{R}$. Integrating over $\Omega$ and using i)
 we find $m=q$.        
\end{proof}  
 
The following is \cite[Theorem 11.1]{day70} rephrased for our case.
\begin{proposition}
Let $u\in L^1(\Omega)$ and $m_0\in L^\infty(\Omega)$. Then 
\begin{equation}\label{day}
\int_0^{|\Omega|} m_0^*(|\Omega|-s) u^*(s)\, ds\leq\int_\Omega m\,u\, dx
\leq\int_0^{|\Omega|} m_0^*(s) u^*(s)\, ds\quad\quad\forall m\in\mathcal{G}(m_0),
\end{equation}
moreover both sides of \eqref{day} are taken on.
\end{proposition}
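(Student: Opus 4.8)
The plan is to read the statement as the two-sided Hardy--Littlewood rearrangement inequality for the pair $m,u$, followed by a sharpness argument. The only property of $m$ that matters is $m\sim m_0$, which by Proposition \ref{furbi} gives $m^*=m_0^*$. Thus the right-hand bound is exactly $\int_0^{|\Omega|}m^*(s)u^*(s)\,ds$, while the substitution $s\mapsto|\Omega|-s$ turns the left-hand bound into $\int_0^{|\Omega|}m^*(s)u^*(|\Omega|-s)\,ds$. It therefore suffices to prove, for every $m\in\mathcal{G}(m_0)$,
\begin{equation*}
\int_0^{|\Omega|}m^*(s)u^*(|\Omega|-s)\,ds\le\int_\Omega mu\,dx\le\int_0^{|\Omega|}m^*(s)u^*(s)\,ds,
\end{equation*}
and to produce two rearrangements of $m_0$ at which the extremes are attained.

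For the inequality I would first reduce to non-negative functions. Picking constants $c_1,c_2$ with $u+c_1\ge 0$ and $m_0+c_2\ge 0$, one has $m+c_2\in\mathcal{G}(m_0+c_2)$, $(m+c_2)^*=m^*+c_2$ and $(u+c_1)^*=u^*+c_1$. Expanding $\int_\Omega(m+c_2)(u+c_1)\,dx$ and $\int_0^{|\Omega|}(m+c_2)^*(u+c_1)^*\,ds$ and using $\int_\Omega m\,dx=\int_0^{|\Omega|}m^*\,ds$ together with $\int_\Omega u\,dx=\int_0^{|\Omega|}u^*\,ds$ (Proposition \ref{furbi}), the extra linear and constant terms cancel on both sides, so the shifted inequality is equivalent to the original one. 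For $m,u\ge 0$ I would then use the layer-cake identity $\int_\Omega mu\,dx=\int_0^\infty\!\!\int_0^\infty|\{m>s\}\cap\{u>t\}|\,ds\,dt$ and the elementary bound $|\{m>s\}\cap\{u>t\}|\le\min(d_m(s),d_u(t))$. Since $\{\xi:m^*(\xi)>s\}=(0,d_m(s))$ and $\{\xi:u^*(\xi)>t\}=(0,d_u(t))$, the same double integral with integrand $\min(d_m(s),d_u(t))$ equals $\int_0^{|\Omega|}m^*(\xi)u^*(\xi)\,d\xi$, which yields the upper bound. Replacing $u$ by $-u$ and using $(-u)^*(\xi)=-u^*(|\Omega|-\xi)$ gives the lower bound after the substitution $\xi\mapsto|\Omega|-\xi$.

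For sharpness I would invoke Ryff's theorem \cite{ryff67}: for $u\in L^1(\Omega)$ there is a measure-preserving map $\sigma\colon\Omega\to(0,|\Omega|)$ with $u=u^*\circ\sigma$ a.e. Setting $\hat m=m_0^*\circ\sigma$ and $\check m(x)=m_0^*(|\Omega|-\sigma(x))$, the measure-preserving property shows that $\hat m$ and $\check m$ have the same distribution function as $m_0$, hence belong to $\mathcal{G}(m_0)$; and the change of variables induced by $\sigma$ gives $\int_\Omega\hat m\,u\,dx=\int_0^{|\Omega|}m_0^*(s)u^*(s)\,ds$ and $\int_\Omega\check m\,u\,dx=\int_0^{|\Omega|}m_0^*(|\Omega|-s)u^*(s)\,ds$, so both bounds are taken on. I expect the main obstacle to lie in this sharpness part: one must check that composition with a measure-preserving map carries $m_0^*$ and its reflection to genuine elements of $\mathcal{G}(m_0)$, i.e. that equimeasurability is preserved, and that Ryff's representation $u=u^*\circ\sigma$ is indeed available for a merely integrable $u$. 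Once the problem is reduced to non-negative functions, the inequality itself is a routine layer-cake computation.
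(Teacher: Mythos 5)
Your proof is necessarily an independent route, because the paper does not prove this proposition at all: it is quoted verbatim from Day's thesis (\cite[Theorem 11.1]{day70}). What you propose is the classical self-contained argument, and its two halves are essentially sound. In particular, the sharpness part that you flag as the likely obstacle is fine: Ryff's representation theorem does hold for every $u\in L^1$ on a finite nonatomic separable measure space, so a measure-preserving $\sigma:\Omega\to(0,|\Omega|)$ with $u=u^*\circ\sigma$ a.e.\ exists (note, though, that this is in Ryff's 1970 paper \emph{Measure preserving transformations and rearrangements}, not in the 1967 extreme-points paper \cite{ryff67} cited in the bibliography). Your verifications that $\hat m=m_0^*\circ\sigma$ and $\check m=m_0^*(|\Omega|-\sigma(\cdot))$ are equimeasurable with $m_0$ (measure preservation of $\sigma$ and of the reflection $s\mapsto|\Omega|-s$) and that the change of variables gives equality in both bounds of \eqref{day} are correct; the integrands $m_0^*u^*$ and $m_0^*(|\Omega|-\cdot)u^*$ are in $L^1(0,|\Omega|)$ since $m_0^*$ is bounded and $u^*\in L^1$. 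The layer-cake computation for nonnegative functions, including the identification $\int_0^\infty\!\!\int_0^\infty\min(d_m(s),d_u(t))\,ds\,dt=\int_0^{|\Omega|}m^*u^*\,d\xi$ via $d_{f^*}=d_f$ (Proposition \ref{furbi}), is also correct.

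There is one genuine gap, and it sits exactly where you did not expect it: the reduction to nonnegative functions. You pick a constant $c_1$ with $u+c_1\geq 0$, but $u$ is only assumed to be in $L^1(\Omega)$ and may be unbounded below, so no such constant exists; the same failure recurs when you pass to $-u$ for the lower bound. The patch is routine. The shift legitimately handles $m_0$, which is bounded; for $u$, truncate: set $u_n=\max(\min(u,n),-n)$, so that $u_n^*=\max(\min(u^*,n),-n)$ (truncation is a continuous increasing function, so Proposition \ref{furbi} applies), and the two-sided inequality holds for each $u_n$ by your argument. Then pass to the limit on both sides:
\begin{equation*}
\int_\Omega m\,u_n\,dx\to\int_\Omega m\,u\,dx
\quad\text{and}\quad
\int_0^{|\Omega|}m_0^*(\cdot)\,u_n^*\,ds\to\int_0^{|\Omega|}m_0^*(\cdot)\,u^*\,ds,
\end{equation*}
by dominated convergence with dominating functions $\|m_0\|_{L^\infty(\Omega)}|u|\in L^1(\Omega)$ and $\|m_0\|_{L^\infty(\Omega)}|u^*|\in L^1(0,|\Omega|)$ respectively, and likewise for the reflected kernel. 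With this truncation step inserted, your proof is complete.
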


The previous proposition implies that the linear optimization problems
\begin{equation}\label{max}
 \sup_{m \in \mathcal{G}(m_0)} \int_\Omega m u \, dx\end{equation}
 and
 \begin{equation*}
 \inf_{m \in \mathcal{G}(m_0)} \int_\Omega m u \, dx\end{equation*}
 admit solution.

Finally, we recall the following result proved in \cite[Theorem 5]{B}.

\begin{proposition}\label{Teobart}
Let $u\in L^1(\Omega)$ and $m_0\in L^\infty(\Omega)$. If problem \eqref{max} has a unique solution 
$m_M$, 
then there exists an increasing function $\psi$ such that $m_M=\psi \circ u$ a.e. in 
$\Omega$.
\end{proposition}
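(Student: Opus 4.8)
The plan is to identify the maximal value of the functional, show that any maximizer is forced to be co-monotone with $u$ and constant on the level sets of $u$, and then package these two facts into the increasing function $\psi$. First I would record, from the Hardy--Littlewood inequality \eqref{day} and the attainment statement following it, that
\begin{equation*}
M:=\sup_{m\in\mathcal{G}(m_0)}\int_\Omega mu\,dx=\int_0^{|\Omega|}m_0^*(s)\,u^*(s)\,ds,
\end{equation*}
so that the maximizer $m_M$ of \eqref{max} satisfies $\int_\Omega m_M u\,dx=M$. The two structural facts to prove are then a \emph{co-monotonicity} of $m_M$ with $u$, which will come from maximality, and the \emph{constancy of $m_M$ on each positive-measure level set of $u$}, which is exactly where the uniqueness hypothesis is used.

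The co-monotonicity I would extract by a measure-preserving exchange argument. Suppose it fails, i.e.\ the set of pairs $(x,y)$ with $u(x)>u(y)$ and $m_M(x)<m_M(y)$ has positive measure in $\Omega\times\Omega$; a density argument then furnishes rationals $a<b$, $c<d$ and sets $A\subseteq\{u>b\}\cap\{m_M<c\}$, $B\subseteq\{u<a\}\cap\{m_M>d\}$ of equal positive measure, together with a measure-preserving bijection $\tau:A\to B$. Extending $\tau$ to the measure-preserving involution $\sigma$ of $\Omega$ equal to the identity off $A\cup B$, the function $\hat m:=m_M\circ\sigma$ is equimeasurable with $m_M$, hence $\hat m\in\mathcal{G}(m_0)$, and a change of variables gives
\begin{equation*}
\int_\Omega\hat m\,u\,dx-\int_\Omega m_M u\,dx=\int_A\bigl(m_M(\tau(x))-m_M(x)\bigr)\bigl(u(x)-u(\tau(x))\bigr)\,dx>0,
\end{equation*}
since on $A$ the first factor exceeds $d-c>0$ and the second exceeds $b-a>0$. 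This contradicts the maximality of $m_M$, so for a.e.\ $x,y$ one has $u(x)>u(y)\Rightarrow m_M(x)\ge m_M(y)$.

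The constancy on level sets is the step that genuinely needs uniqueness. If $E=\{u=t_0\}$ had positive measure while $m_M$ were not a.e.\ constant on $E$, I would choose a measure-preserving bijection $\sigma$ of $E$ (identity elsewhere) with $m_M\circ\sigma\neq m_M$ on a set of positive measure. Because $u\equiv t_0$ on $E$ and $\sigma$ preserves measure,
\begin{equation*}
\int_\Omega(m_M\circ\sigma)\,u\,dx=t_0\int_E m_M\circ\sigma\,dx+\int_{\Omega\setminus E}m_M u\,dx=\int_\Omega m_M u\,dx=M,
\end{equation*}
so $m_M\circ\sigma\in\mathcal{G}(m_0)$ would be a second solution of \eqref{max}, contradicting uniqueness. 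Hence $m_M$ is a.e.\ constant on each positive-measure level set of $u$.

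It remains to assemble these facts. Setting $\psi(t):=\esssup\{m_M(x):u(x)\le t\}$, the co-monotonicity makes $\psi$ nondecreasing; fixing a point of value $u(x_0)$, co-monotonicity forces $m_M\le m_M(x_0)$ a.e.\ on $\{u<u(x_0)\}$ while the level-set constancy forces $m_M=m_M(x_0)$ on $\{u=u(x_0)\}$, so $\psi(u(x_0))=m_M(x_0)$, i.e.\ $m_M=\psi\circ u$ a.e. The main obstacle is not conceptual but measure-theoretic bookkeeping: ensuring that the swapped functions truly remain in $\mathcal{G}(m_0)$, producing the bijections $\tau$ and $\sigma$ of prescribed measure, and converting the ``almost-every-pair'' co-monotonicity into the pointwise identity via Fubini. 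The uniqueness hypothesis, by contrast, is used in exactly one place, namely to pin $m_M$ down on the flat parts of $u$.
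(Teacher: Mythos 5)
The paper itself offers no proof of this proposition: it is imported verbatim from Burton \cite[Theorem 5]{B}, so your attempt has to be judged as a reconstruction of that result. Your overall route is the right one, and indeed the natural one: the two-point exchange argument for co-monotonicity is correct (the identity $\int_\Omega\hat m\,u\,dx-\int_\Omega m_M u\,dx=\int_A(m_M(\tau(x))-m_M(x))(u(x)-u(\tau(x)))\,dx$ checks out, the countable-quadruple density argument producing $A$ and $B$ is sound, and measure-preserving bijections between equal-measure sets exist modulo null sets, which is all that equimeasurability requires), and you place the uniqueness hypothesis exactly where it belongs, namely to force $m_M$ to be a.e.\ constant on each positive-measure level set of $u$ via a permutation within that level set.

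There is, however, a genuine (though reparable) gap in the final assembly. With $\psi(t)=\esssup\{m_M(x):u(x)\le t\}$, your verification of $\psi(u(x_0))=m_M(x_0)$ obtains the lower bound $\psi(u(x_0))\ge m_M(x_0)$ from the level-set constancy on $\{u=u(x_0)\}$ — but this is vacuous whenever $|\{u=u(x_0)\}|=0$, and that can happen for \emph{every} $x_0$ at once (e.g.\ whenever the distribution function $d_u$ is continuous), so the problematic set is not negligible. On that set, Fubini applied to the a.e.-pair co-monotonicity only yields the bracket
\begin{equation*}
\esssup\{m_M(y):u(y)<u(x_0)\}\;\le\; m_M(x_0)\;\le\;\essinf\{m_M(y):u(y)>u(x_0)\},
\end{equation*}
which does not pin $m_M(x_0)$ to the left endpoint; so ``via Fubini'', as you put it, does not close this step. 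It can be closed by one more density argument: let $h(t)=\esssup\{m_M(y):u(y)<t\}$ and suppose the set of $x$ with null level set and $m_M(x)>h(u(x))$ had positive measure; choose a rational $q$ so that $S_q=\{x:\,h(u(x))<q<m_M(x)\}$ has positive measure; since all levels met by $S_q$ are null, the push-forward of Lebesgue measure on $S_q$ under $u$ is atomless, so some level $t^*$ splits $S_q$ into two positive-measure pieces; for any $x$ in the upper piece, $m_M\le h(u(x))<q$ a.e.\ on $\{u<u(x)\}$, which contains the lower piece, contradicting $m_M>q$ there. With this lemma, $m_M(x)=h(u(x))=\psi(u(x))$ a.e.\ on the atomless part, your argument covers the atoms, and the proof is complete; note also that $\psi$ so defined is nondecreasing, which is what the paper means by ``increasing'' (compare the usage in Theorem \ref{exist}).
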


\section{Qualitative properties of $\mu_k(m)$  and optimization of $\mu_1(m)$}
\label{existence}   

In this section we will prove some qualitative properties of the eigenvalues $\mu_{k}(m)$, 
$k=1,2,3,\ldots$,  of the operator $G_m$ defined in \eqref{Gmf}, interesting on their 
own; then, we will use them to investigate the optimization of $\mu_{1}(m)$.
By Proposition \ref{equivalenzalambdamu}, as an immediate consequence, we obtain the 
corresponding result for $\lambda_{1}(m)$. 
  
We introduce the following convex subset of $L^{\infty}(\Omega)$ 
\begin{equation*}
L^\infty_<(\Omega)=\left\{m\in L^\infty (\Omega): \int_\Omega m\,dx <0\right\}. \end{equation*}

Observe that, by Proposition \ref{segnorho}, 
$\mu_k(m)$ and $u_m$ (the unique positive eigenfunction of $\mu_1(m)$ of problem \eqref{p0}
normalized as in \eqref{normaliz1}) are well defined only when $|\{m>0\}|>0$. We extend 
them
to the whole space $L^\infty_<(\Omega)$ by putting   
\begin{equation} \label{muk}  
\widetilde{\mu}_k(m)=\begin{cases} \mu_k(m) \quad & \text{if } |\{m>0\}|>0\\
0 & \text{if } |\{m>0\}|=0\end{cases}
\end{equation}
and
\begin{equation}  \label{um}   
\widetilde{u}_m=\begin{cases}u_m \quad & \text{if } |\{m>0\}|>0\\
0 & \text{if } |\{m>0\}|=0.\end{cases}
\end{equation}     

\begin{remark}\label{oss1}
Note that $\widetilde{\mu}_k(m)=0$ if and only if $|\{m>0\}|=0$ a.e. in $\Omega$ and, in this 
circumstance,  the inequality
\begin{equation}\label{tilde}\sup_{F_k\subset V_m(\Omega)}
\min_{f\in F_k\atop f\neq 0}
\cfrac{\langle G_m f, f \rangle_{V_m(\Omega)} }{\|f\|^2_{V_m(\Omega)}} \,\leq 0 
\end{equation}
holds, where $F_k$ varies among all the $k$-dimensional subspaces of $V_m(\Omega)$. 
Moreover,  from \eqref{homo}, we have $\widetilde{\mu}_1(\alpha m)=\alpha \widetilde{\mu} 
_1(m)$ for every $\alpha\geq 0$.
\end{remark}

\begin{lemma}\label{teo1} 
Let $m\in L_<^\infty(\Omega)$,  $E_m$ be the linear operator \eqref{Em}. Then, the map $m\mapsto E_m$ is sequentially weakly* continuous from $L_<^\infty(\Omega)$
to $\mathcal{L}(H^1(\Omega),H^1(\Omega)) $ endowed with the norm topology.
\end{lemma}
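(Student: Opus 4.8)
The plan is to prove the continuity directly on sequences: given $m_n\rightharpoonup^* m$ in $L^\infty(\Omega)$ with all terms in $L^\infty_<(\Omega)$, I will show $\|E_{m_n}-E_m\|_{\mathcal L(H^1(\Omega),H^1(\Omega))}\to 0$. First I would record the uniform bounds that make the family well behaved. A weak* convergent sequence is norm bounded (Banach--Steinhaus), so $\sup_n\|m_n\|_{L^\infty(\Omega)}<\infty$; testing the weak* convergence against the constant $1\in L^1(\Omega)$ gives $\int_\Omega m_n\,dx\to\int_\Omega m\,dx<0$, so $|\int_\Omega m_n\,dx|$ is bounded away from $0$. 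Hence $C_1(m_n)$ of \eqref{c1} is uniformly bounded, and by \eqref{normaEm} and \eqref{stimanorme2} the operators $E_{m_n}$ are uniformly bounded in $\mathcal L(H^1(\Omega),H^1(\Omega))$, while the norms $\|\cdot\|_{V_{m_n}(\Omega)}$ are uniformly equivalent to $\|\cdot\|_{H^1(\Omega)}$. By a standard argument, operator-norm convergence is equivalent to the assertion that $\|E_{m_n}(f_n)-E_m(f_n)\|_{H^1(\Omega)}\to 0$ for every sequence $(f_n)$ bounded in $H^1(\Omega)$; and by a subsequence extraction it suffices to treat the case in which, in addition, $f_n\to f$ strongly in $L^2(\Omega)$ (using the compact embedding $H^1(\Omega)\hookrightarrow L^2(\Omega)$) and $f_n\rightharpoonup f$ weakly in $H^1(\Omega)$.

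Next I would reformulate $E_m$ on the whole of $H^1(\Omega)$. Starting from $E_m(f)=G_m(P_m(f))$ (Proposition \ref{proprietaop}, i)) and using Definition \ref{def1}, \eqref{Emf} and i) of Proposition \ref{prop1}, one checks the identity
\[
\int_\Omega\nabla E_m(f)\cdot\nabla\varphi\,dx=\int_\Omega m\,P_m(f)\,\varphi\,dx\qquad\forall\,\varphi\in H^1(\Omega),
\]
and likewise $\int_\Omega\nabla w_n\cdot\nabla\varphi\,dx=\int_\Omega m_n\,P_{m_n}(f_n)\,\varphi\,dx$ for $w_n:=E_{m_n}(f_n)$. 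Set $v_n:=E_m(f_n)$; since $E_m\colon L^2(\Omega)\to H^1(\Omega)$ is bounded and $f_n\to f$ in $L^2(\Omega)$, I get $v_n\to E_m(f)$ strongly in $H^1(\Omega)$. For $w_n$, the uniform bound lets me extract $w_n\rightharpoonup w$ in $H^1(\Omega)$ and, by compactness, $w_n\to w$ in $L^2(\Omega)$. The convergences $\int_\Omega m_nf_n\,dx\to\int_\Omega mf\,dx$ and $\int_\Omega m_n\,dx\to\int_\Omega m\,dx$ yield $P_{m_n}(f_n)\to P_m(f)$ in $L^2(\Omega)$, and a weak*-times-strong product argument lets me pass to the limit in the equation for $w_n$ and conclude that $w$ solves the same Neumann problem as $E_m(f)$. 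The two solutions therefore differ by a constant; since $w_n\in V_{m_n}(\Omega)$ gives $\int_\Omega m_nw_n\,dx=0$, passing to the limit yields $\int_\Omega mw\,dx=0$, so $w\in V_m(\Omega)$ and the constant must vanish because $\int_\Omega m\,dx\neq0$. Hence $w=E_m(f)$ and, the limit being independent of the subsequence, $w_n\rightharpoonup E_m(f)$.

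The main obstacle is upgrading this weak $H^1$ convergence to the strong convergence the operator norm requires: weak* convergence of the weights is generically too weak to force norm convergence of the associated operators, and what rescues the argument is the variational (energy) structure combined with the compact embedding. The $L^2$ part is immediate, since both $w_n\to E_m(f)$ and $v_n\to E_m(f)$ in $L^2(\Omega)$. For the gradients I would expand $\|\nabla(w_n-v_n)\|_{L^2(\Omega)}^2$; the cross term and the $v_n$ term converge by weak--strong convergence to $-\|\nabla E_m(f)\|_{L^2(\Omega)}^2$, so everything reduces to the energy convergence $\|\nabla w_n\|_{L^2(\Omega)}^2\to\|\nabla E_m(f)\|_{L^2(\Omega)}^2$. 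This I obtain by testing the equation for $w_n$ with $\varphi=w_n$, which gives $\|\nabla w_n\|_{L^2(\Omega)}^2=\int_\Omega m_n\,P_{m_n}(f_n)\,w_n\,dx$; since $P_{m_n}(f_n)\to P_m(f)$ and $w_n\to E_m(f)$ strongly in $L^2(\Omega)$ while $m_n\rightharpoonup^* m$, the right-hand side tends to $\int_\Omega m\,P_m(f)\,E_m(f)\,dx$, which equals $\|\nabla E_m(f)\|_{L^2(\Omega)}^2$ by the global identity above tested with $E_m(f)$. Thus $\|\nabla(w_n-v_n)\|_{L^2(\Omega)}\to0$, whence $\|w_n-v_n\|_{H^1(\Omega)}\to0$ along the subsequence; the subsequence principle then gives the claim for every bounded $(f_n)$, and hence the desired operator-norm convergence.
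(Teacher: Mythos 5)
Your proof is correct, and it takes a genuinely different route from the paper's, even though both arguments draw on the same toolkit (uniform bounds on the constants $C_1(m_n)$ via \eqref{c1}, the compact embedding $H^1(\Omega)\hookrightarrow L^2(\Omega)$, and limit passage in the weak formulation). The paper argues in two stages: first it proves strong pointwise convergence $E_{m_i}(f)\to E_m(f)$ in $H^1(\Omega)$ for each \emph{fixed} $f$, transplanting everything into the fixed space $V_m(\Omega)$ via the projections $P_m$, extracting a weak limit and upgrading it to a strong one by testing the subtracted equations with $P_m(u_i)-u$; second, it promotes pointwise to operator-norm convergence by using the compactness of $E_{m_i}$ restricted to $H^1(\Omega)$ (iii) of Proposition \ref{proprietaop}) to attain the operator norm at maximizers $f_i$, then extracting a weak limit $f$ and invoking the uniform $L^2\to H^1$ bound \eqref{normaEm} in a three-term triangle inequality. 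You collapse the two stages into one: your characterization of operator-norm convergence through arbitrary bounded sequences $(f_n)$ needs only near-maximizers, so you never use compactness of the operators themselves; you pass to the limit in the global weak formulation
\begin{equation*}
\int_\Omega\nabla E_m(f)\cdot\nabla\varphi\,dx=\int_\Omega m\,P_m(f)\,\varphi\,dx\qquad\forall\,\varphi\in H^1(\Omega)
\end{equation*}
(which is exactly Lemma \ref{equivalenza} applied to $u=E_m(f)\in V_m(\Omega)$) with weight and datum varying \emph{simultaneously}; you identify the weak limit up to an additive constant, which is killed by passing the constraint $\int_\Omega m_nw_n\,dx=0$ to the limit since $\int_\Omega m\,dx\neq0$; and you upgrade weak to strong $H^1$ convergence by a Radon--Riesz-type energy argument (testing the equation with $w_n$ itself and using $\|\nabla w_n\|^2_{L^2(\Omega)}\to\|\nabla E_m(f)\|^2_{L^2(\Omega)}$) rather than by testing with the difference. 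Your route buys a lighter structure --- no attainment step, no maximizing-sequence bookkeeping, and less shuttling between the varying spaces $V_{m_n}(\Omega)$ --- while the paper's route isolates the pointwise continuity statement $E_{m_i}(f)\to E_m(f)$, of independent interest, and produces an explicit bound for $\|E_{m_i}-E_m\|_{\mathcal{L}(H^1(\Omega),H^1(\Omega))}$ in terms of $\|f_i-f\|_{L^2(\Omega)}$. All of your limit passages (weak* paired against strong $L^1$ convergence, $P_{m_n}(f_n)\to P_m(f)$ in $L^2(\Omega)$, and the subsequence principle for the scalar quantity $\|w_n-v_n\|_{H^1(\Omega)}$) check out.
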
 
  
\begin{proof}

i) Let $\{m_i\}$ be a sequence which weakly* converges to $m$ in $L^\infty_<(\Omega)$. 
Being $\{m_i\}$ bounded in $L^\infty(\Omega)$, there exists a constant $M>0$ such that
\begin{equation}\label{M}  
\|m\|_{L^\infty(\Omega)}\leq M\quad \text{ and } \quad \|m_i\|_{L^\infty(\Omega)}\leq M
\quad \forall\, i. 
\end{equation} 
We begin by proving that            
$E_{m_i}(f)$ tends to $E_m(f)$ in $H^1(\Omega)$ for any fixed $f\in H^1(\Omega)$.
Recalling i) of Proposition \ref{proprietaop}, we put $u_i=E_{m_i}(f)= G_{m_i}(P_{m_i}(f))$ and 
$u=E_{m}(f)= G_{m}(P_{m}(f))$.\\ 
First, we show that $P_m(u_i)$ weakly converges to $u$ in $V_m(\Omega)$; indeed, by \eqref{Emf} we have             
\begin{equation*}
\langle u_i,\varphi\rangle_{V_{m_i}(\Omega)}=
\langle m_if,\varphi\rangle_{L^2(\Omega)}\quad \forall\varphi \in V_{m_i}(\Omega).
\end{equation*}
By Lemma \ref{equivalenza} and iii) of Proposition \ref{prop1}, we find  
\begin{equation}\label{ui}
\langle\nabla u_i,\nabla\varphi\rangle_{L^2(\Omega)}= \langle m_i P_{m_i}(f), 
\varphi\rangle_{L^2(\Omega)} \quad\forall\, \varphi\in
 H^1(\Omega).
\end{equation}
Similarly, for $u$ we have
\begin{equation}\label{u}  
\langle\nabla u,\nabla\varphi\rangle_{L^2(\Omega)}= \langle m P_{m}(f),\varphi\rangle_{L^2(\Omega)} \quad\forall\, \varphi\in H^1(\Omega).
\end{equation}
Taking $\varphi\in V_m(\Omega)$ in \eqref{ui} and in \eqref{u} we
find 
\begin{equation}\label{A01}
\langle u_i,\varphi\rangle_{V_m(\Omega)}=
\langle m_iP_{m_i}(f),\varphi\rangle_{L^2(\Omega)}\quad \forall\varphi \in V_m(\Omega)
\end{equation}
and
\begin{equation}\label{A02}
\langle u,\varphi\rangle_{V_m(\Omega)}=
\langle mP_{m}(f),\varphi\rangle_{L^2(\Omega)}\quad \forall\varphi \in V_m(\Omega).
\end{equation}
By using iii) of Proposition  \eqref{prop1}, equation \eqref{A01}  becomes
\begin{equation}\label{A03}      
\langle P_m(u_i),\varphi\rangle_{V_m(\Omega)}=
\langle m_iP_{m_i}(f),\varphi\rangle_{L^2(\Omega)}\quad \forall\varphi \in V_m(\Omega)
\end{equation}                    
and,   subtracting \eqref{A02} from \eqref{A03}, we get        
\begin{equation}\label{sottrazione}
\langle P_m(u_i)-u,\varphi\rangle_{V_m(\Omega)}=
\langle m_iP_{m_i}(f)-m P_m(f),\varphi\rangle_{L^2(\Omega)}\quad \forall\varphi \in V_m(\Omega).
\end{equation}
As a consequence of the weak* convergence of $m_i$ to $m$ in $L^\infty(\Omega)$, letting $i\to
\infty$ we obtain $\int_\Omega m_i f\varphi \,dx \to \int_\Omega m f\varphi \,dx$, $\int_\Omega m_i  
f\,dx \to \int_\Omega m f\,dx$, $\int_\Omega m_i\varphi \,dx \to \int_\Omega m\varphi \,dx$ and $
\int_\Omega m_i\,dx \to \int_\Omega m\,dx$, which imply that  the right hand term goes to zero,  
thus $P_m(u_i)$ weakly converges to $u$ in
$V_m(\Omega)$.
By exploiting the continuity of the inclusion $V_m(\Omega)\hookrightarrow H^1(\Omega)$ and the 
compactness of $H^1(\Omega)\hookrightarrow L^2(\Omega)$, we deduce that $P_m(u_i)$ weakly 
converges to $u$ in $H^1(\Omega)$ and strongly in $L^2(\Omega)$.
Putting $\varphi=P_m(u_i)-u$ in \eqref{sottrazione} we get
\begin{equation}\label{inter}    
\begin{split} 
 \| P_m(u_i)-u\|^2_{V_m(\Omega)}&=
\langle m_iP_{m_i}(f)-m P_m(f),P_m(u_i)-u\rangle_{L^2(\Omega)}\\
&\leq\left(\| m_iP_{m_i}(f)\|_{L^2(\Omega))}+\| mP_{m}(f)\|_{L^2(\Omega)}\right)\| P_m(u_i)-u\|_{L^2(\Omega)}.
\end{split} 
\end{equation}
By using \eqref{M}, \eqref{stima1} and \eqref{c1}, we find
\begin{equation}\label{59}
  \| mP_{m}(f)\|_{L^2(\Omega)}\leq\,M\| P_{m}(f)\|_{L^2(\Omega)}
  \leq \, M C_1(m)\|f\|_{L^2(\Omega)}.  
\end{equation}     
Similarly,  we have
 \begin{equation}\label{55}
  \| m_iP_{m_i}(f)\|_{L^2(\Omega)}\leq M C_1(m_i)\|f\|_{L^2(\Omega)}.
\end{equation}       
By weak*  convergence of $m_i$ to $m$ we can assume 
\begin{equation*}
 \left|\int_\Omega m_i\,dx\right|> \frac{\left|\int_\Omega m\,dx\right|}{2}  
\end{equation*}
for $i$ large enough. Therefore 
\begin{equation}\label{stimami}  
C_1(m_i)\leq 1+ \frac{M}{\left|\int_\Omega m_i\, dx \right|}\, |\Omega|\leq 1+ \frac{2M}{\left|\int_\Omega m\, dx \right|}\, |\Omega| 
\end{equation}      
and, trivially
\begin{equation}\label{stimam} 
C_1(m)\leq 1+ \frac{2M}{\left|\int_\Omega m\, dx \right|}\, |\Omega|.  
\end{equation}   
For sake of simplicity we put
\begin{equation}\label{DmM}     
D(m,M)=1+ \frac{2M}{\left|\int_\Omega m\, dx \right|}\, |\Omega|.  
\end{equation}       
Then, by \eqref{inter}, \eqref{59}, \eqref{55}, \eqref{stimami} and \eqref{DmM} we find        
 \begin{equation*}                
 \| P_m(u_i)-u\|_{V_m(\Omega)}^2\leq MD(m,M)\|f\|_{L^2(\Omega)} \| P_m(u_i)-u\|_{L^2(\Omega)},         
\end{equation*}             
 from which the convergence of  $P_{m}(u_i)$ to $u$ in $V_m(\Omega)$ follows. 
 The next step shows that, actually, $u_i$ strongly converges to $u$ in $L^2(\Omega)$. Indeed, 
 by using Definition \ref{def1}, vi) of Proposition \ref{prop1}, \eqref{stima1},  \eqref{stimami}
 and \eqref{DmM}, we have             
\begin{equation*}
\begin{split}  
 \|u_i-u\|_{L^2(\Omega)}=&\left \| -\frac{\int_\Omega m_iu\,dx}{\int_\Omega m_i\, dx}+P_{m_i}\left(P_m(u_i)-u\right)\right\|_{L^2(\Omega)}\\
 \leq & \left \| \frac{\int_\Omega m_iu\,dx}{\int_\Omega m_i\, dx}\right\|_{L^2(\Omega)}+ \left \|P_{m_i}\left(P_m(u_i)-u\right)\right\|_{L^2(\Omega)}\\
 \leq  &\left | \frac{\int_\Omega m_iu\,dx}{\int_\Omega m_i\, dx}\right| |\Omega|^{1/2}+
 D(m,M)\left\|P_m(u_i)-u\right\|_{L^2(\Omega)},            
\end{split}            
 \end{equation*}
 which goes to zero because $\int_\Omega m_i  
u\,dx/\int_\Omega m_i \,dx \to \int_\Omega m  
u\,dx/\int_\Omega m \,dx=0$ ($u\in V_m(\Omega)$) and $P_m(u_i)\to u$ in $L^2(\Omega)$.     
 Moreover, by iv) of Proposition \ref{prop1}, we have    
 \begin{equation*}  
  \| u_i-u\|_{H^1(\Omega)}=\left(\|u_i-u\|^2_{L^2(\Omega)}+\| P_m(u_i)-u\|_{V_m(\Omega)}^2\right)^{1/2}  
 \end{equation*}
 and then $u_i$ converges to $u$ in $H^1(\Omega)$.   
 Summarizing, for every  $f\in H^1(\Omega)$ we have   
\begin{align*}
\|E_{m_i}(f) - E_m (f)\|_{H^1(\Omega)}\to 0\quad\text{for }i\to\infty.  
\end{align*}

Now, for fixed $i$, let $\{f_{i,j}\}$, $j=1,2, 3,\ldots$, be a maximizing sequence of
$$\sup_{g\in H^1(\Omega)\atop \|g\|_{H^1(\Omega)}\leq 1}
 \|E_{m_i}(g) - E_{m}(g)\|_{H^1(\Omega)}=\|E_{m_i}-E\|_{\mathcal{L}(H^1(\Omega), H^1(\Omega))} .$$      
Then, being $\|f_{i,j}\|_{H^1(\Omega)}\leq 1$, we can extract a subsequence (still denoted by $\{f_{i,j}\}$)
weakly convergent to some $f_i\in H^1(\Omega)$. Since  the operators
$E_{m_i}$ and $E_m$ restricted to $H^1(\Omega)$ are compact (see iii) of Proposition 
\ref{proprietaop}), it follows that
$E_{m_i}(f_{i,j})$ converges to $E_{m_i}(f_{i})$ and  
$E_{m}(f_{i,j})$ converges to $E_{m}(f_{i})$ strongly in $V_m(\Omega)$ and then in
$H^1(\Omega)$ as $j$ goes to
$\infty$. Thus we find
$$ \|E_{m_i}-E_m\|_{\mathcal{L}(H^1(\Omega), H^1(\Omega))}
=\|E_{m_i}(f_i) - E_m(f_i)\|_{H^1(\Omega)}.$$
This procedure yields a sequence $\{f_i\}$ in $H^1(\Omega)$ such that $\|f_{i}\|_{H^1(\Omega)}\leq 1$ 
for all $i$. Then, up to a subsequence, we can assume that
$\{f_i\}$ weakly converges  to a function $f\in H^1(\Omega)$ 
and (by compactness of the inclusion $H^1(\Omega)\hookrightarrow L^2(\Omega)$) strongly in $L^2(\Omega)$.  
By using \eqref{stimanorme2}, \eqref{stimami}, \eqref{stimam}, \eqref{DmM}, \eqref{normaEm} and \eqref{M} we find   
\begin{align*} & \|E_{m_i}- E_m\|_{\mathcal{L}(H^1(\Omega), H^1(\Omega))}=
\|E_{m_i}(f_i) - E_m(f_i)\|_{H^1(\Omega)}\\
& \leq \|E_{m_i}(f) - E_m(f)\|_{H^1(\Omega)} + \|E_{m_i}(f_i-f)\|_{H^1(\Omega)} +\|E_m(f_i-f)\|_
{H^1(\Omega)}\\
& \leq \|E_{m_i}(f) - E_m(f)\|_{H^1(\Omega)}
+ \left(C^2\cdot C_1^2(m_i)+1\right)^{1/2}\| 
E_{m_i}(f_i-f)\|_{V_{m_i}(\Omega)}\\
&+\left(C^2\cdot C_1^2(m)+1\right)^{1/2}\|E_m(f_i-f)\|_  
{V_{m}(\Omega)}\\       
&\leq \|E_{m_i}(f) - E_m(f)\|_{H^1(\Omega)}\\  
& +\left(C^2\cdot D(m,M)^2+1\right)^{1/2}\left(\|E_{m_i}\|  
_{\mathcal{L}(L^2(\Omega), V_{m_i}(\Omega))}+\|E_{m}\|_{\mathcal{L}(L^2(\Omega), 
V_m(\Omega))} \right)   
\|f_i-f\|_{L^2(\Omega)} \\      
&\leq \|E_{m_i}(f) - E_m(f)\|_{H^1(\Omega)}\\
& +\left(C^2\cdot D(m,M)^2+1\right)^{1/2} \left(C\cdot C_1(m_i)\|m_i\|_{L^\infty(\Omega)}
+C\cdot C_1(m)\|m\|_{L^\infty(\Omega)}      
 \right) \|f_i-f\|_{L^2(\Omega)}\\
 &\leq \|E_{m_i}(f) - E_m(f)\|_{H^1(\Omega)}+CMD(m,M)\left(C^2\cdot D(m,M)^2+1\right)^{1/2}  
 \|f_i-f\|_{L^2(\Omega)}.     
 \end{align*}    
Therefore $E_{m_i}$ converges to $E_m$ in the operator norm.
\end{proof}
\begin{remark}
We note that the previous lemma still holds replacing $L_<^\infty(\Omega)$ by  
the set of $L^\infty(\Omega)$ such that $\int_\Omega m\;dx\neq 0$.             
\end{remark}

\begin{lemma}\label{teo1ii}
Let $m\in L_<^\infty(\Omega)$, $\widetilde{\mu}_k(m)$       
as defined in \eqref{muk} for $k=1,2,3,\ldots$ and $\widetilde{u}_m$ as in \eqref{um}. Then\\
i) the map $m\mapsto \widetilde{\mu}_k(m)$ is sequentially weakly* continuous in
$L_<^\infty(\Omega)$; \\
ii) the map $m\mapsto \widetilde{\mu}_1(m)\widetilde{u}_m$ is sequentially weakly* continuous
from $L_<^{\infty}(\Omega)$ to $H^1(\Omega)$ (endowed with the norm topology). In particular, 
for any sequence $\{m_i\}$ weakly* convergent to $m\in L_<^\infty(\Omega)$, with $\widetilde{\mu}_1(m)>0$, then 
$\{\widetilde{u}_{m_i}\}$ converges to $\widetilde{u}_{m}$ in $H^1(\Omega)$.
\end{lemma}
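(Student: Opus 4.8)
The plan is to derive everything from Lemma \ref{teo1}, which guarantees that $E_{m_i}\to E_m$ in the operator norm of $\mathcal{L}(H^1(\Omega),H^1(\Omega))$ whenever $m_i\rightharpoonup^* m$ in $L^\infty_<(\Omega)$, together with the compactness of these operators on $H^1(\Omega)$ (iii of Proposition \ref{proprietaop}). Since the asserted continuity is sequential, I would fix such a sequence once and for all. For (i) the first step is to set up a spectral dictionary: the nonzero eigenvalues of the compact operator $E_m|_{H^1(\Omega)}$ coincide, with the same multiplicities, with the nonzero eigenvalues of $G_m$, i.e.\ with the numbers $\widetilde\mu_k(m)$ and $\mu_{-k}(m)$. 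Indeed, from $E_m=G_m\circ P_m$ on $H^1(\Omega)$ and $P_m|_{V_m(\Omega)}=\mathrm{id}$ (iii of Proposition \ref{prop1}), if $E_m f=\mu f$ with $\mu\neq 0$ then $f=\mu^{-1}E_m f\in V_m(\Omega)$, whence $G_m f=E_m f=\mu f$; conversely every eigenpair of $G_m$ is one of $E_m$. Because $G_m$ is self-adjoint (ii of Proposition \ref{proprietaop}), these eigenvalues are real and admit no generalized eigenvectors, so geometric and algebraic multiplicities agree. With this dictionary in place I would invoke the standard stability of the nonzero spectrum of compact operators under norm convergence: the $k$-th positive eigenvalue of $E_{m_i}$ converges to that of $E_m$, and any positive eigenvalue of $E_{m_i}$ with no positive counterpart in the limit must tend to $0$. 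Reading this back through the extension \eqref{muk} gives $\widetilde\mu_k(m_i)\to\widetilde\mu_k(m)$, the degenerate case $|\{m>0\}|=0$ (where $\widetilde\mu_k(m)=0$) being precisely the situation in which the positive eigenvalues escape to $0$.

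For (ii) I would first bound the eigenfunctions. For those $i$ with $|\{m_i>0\}|>0$, the normalization $\|\nabla\widetilde u_{m_i}\|_{L^2(\Omega)}=1$ together with Proposition \ref{prop2} and the uniform control $C_1(m_i)\leq D(m,M)$ from \eqref{stimami}--\eqref{DmM} yields an $H^1(\Omega)$-bound for $\widetilde u_{m_i}$ independent of $i$. Passing to a subsequence, $\widetilde u_{m_i}\rightharpoonup w$ in $H^1(\Omega)$ and strongly in $L^2(\Omega)$. The key identity is $E_{m_i}(\widetilde u_{m_i})=\widetilde\mu_1(m_i)\widetilde u_{m_i}$; writing
\[
\widetilde\mu_1(m_i)\widetilde u_{m_i}-E_m(w)=\bigl(E_{m_i}-E_m\bigr)(\widetilde u_{m_i})+E_m(\widetilde u_{m_i}-w),
\]
the first term tends to $0$ by Lemma \ref{teo1} and the $H^1$-bound, and the second by compactness of $E_m$ on $H^1(\Omega)$. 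Hence $\widetilde\mu_1(m_i)\widetilde u_{m_i}\to E_m(w)$ in $H^1(\Omega)$.

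It remains to identify the limit, splitting on the value of $\widetilde\mu_1(m)$. If $\widetilde\mu_1(m)=0$, then by (i) $\widetilde\mu_1(m_i)\to 0$, so $\|\widetilde\mu_1(m_i)\widetilde u_{m_i}\|_{H^1(\Omega)}\to 0$ from the uniform bound, matching $\widetilde\mu_1(m)\widetilde u_m=0$. If $\widetilde\mu_1(m)>0$, then $\widetilde\mu_1(m_i)\to\widetilde\mu_1(m)>0$ by (i), so $\widetilde u_{m_i}\to\widetilde\mu_1(m)^{-1}E_m(w)$ strongly in $H^1(\Omega)$; comparing with $\widetilde u_{m_i}\rightharpoonup w$ forces $E_m(w)=\widetilde\mu_1(m)\,w$, i.e.\ $w$ is an eigenfunction of $G_m$ for $\mu_1(m)$. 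Passing $\widetilde u_{m_i}\geq 0$ and $\int_\Omega m_i\widetilde u_{m_i}\,dx=0$ to the limit (a product of weak$^*$ and $L^2$-strong convergence) gives $w\geq 0$, $w\in V_m(\Omega)$, and $\|w\|_{V_m(\Omega)}=\lim\|\nabla\widetilde u_{m_i}\|_{L^2(\Omega)}=1$; by simplicity of $\mu_1(m)$ and the sign of its eigenfunction, $w=u_m$. Since the limit $u_m$ is independent of the subsequence, the whole sequence converges and $\widetilde\mu_1(m_i)\widetilde u_{m_i}\to\widetilde\mu_1(m)\widetilde u_m$ in $H^1(\Omega)$.

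I expect the main obstacle to be (i): the operators $E_{m_i}$ act on the fixed space $H^1(\Omega)$ but are not self-adjoint there, so the clean min-max stability cannot be applied to them directly. The point to get right is precisely the passage through $G_m$, which simultaneously forces the nonzero spectrum to be real with equal algebraic and geometric multiplicities and legitimizes the spectral-stability step. The secondary delicate point, handled above, is ruling out a loss of the principal eigenvalue in the limit, i.e.\ lower semicontinuity of $\widetilde\mu_1$; this can alternatively be checked by hand by testing the characterization \eqref{mu1} for $m_i$ with $P_{m_i}(u_m)\in V_{m_i}(\Omega)$ and using $\nabla P_{m_i}(u_m)=\nabla u_m$ together with $P_{m_i}(u_m)\to u_m$ in $L^2(\Omega)$, which yields $\liminf_i\widetilde\mu_1(m_i)\geq\widetilde\mu_1(m)$.
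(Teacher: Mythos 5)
Your proposal is correct, but for part (i) it takes a genuinely different route from the paper. The paper proves the explicit Lipschitz-type bound
\begin{equation*}
|\widetilde{\mu}_k(m_i)-\widetilde{\mu}_k(m)|\leq D(m,M)\left(C^2\cdot D^2(m,M)+1\right)^{1/2}\|E_{m_i}-E_m\|_{\mathcal{L}(H^1(\Omega),H^1(\Omega))}
\end{equation*}
by hand: following \cite[Theorem 2.3.1]{He}, it compares the Fischer max--min quotients \eqref{Fischer1} for $G_{m_i}$ and $G_m$ after transporting an optimal subspace from $V_{m_i}(\Omega)$ to $V_m(\Omega)$ with the projections (using $\nabla P_m(f)=\nabla f$ and the identity $G_{m_i}-P_{m_i}\circ G_m\circ P_m=P_{m_i}\circ(G_{m_i}\circ P_{m_i}-G_m\circ P_m)$), splitting into the cases where the two eigenvalues are positive or zero. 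You instead identify the nonzero spectrum of the non-self-adjoint compact operator $E_m|_{H^1(\Omega)}$ with that of the self-adjoint $G_m$ and invoke Riesz-projection stability of isolated eigenvalues under norm convergence, with upper semicontinuity of the spectrum handling the degenerate case $|\{m>0\}|=0$ (this dichotomy is clean by Proposition \ref{segnorho}: either $\widetilde{\mu}_k(m)>0$ for all $k$ or all vanish). The trade-off: the paper's computation is elementary and yields a quantitative modulus of continuity uniform in $k$, while your argument is shorter once the dictionary is set up but leans on abstract perturbation theory, and since $E_m$ is not self-adjoint on $H^1(\Omega)$ you must import realness \emph{and} semisimplicity from $G_m$. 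Your stated justification covers eigenvectors only; the missing one-liner is that $E_m$ itself has no generalized eigenvectors at $\mu\neq 0$: if $(E_m-\mu)^2f=0$, then $g=(E_m-\mu)f$ satisfies $E_m g=\mu g$, so $g\in V_m(\Omega)$, whence $\mu f=E_m f-g\in V_m(\Omega)$ and, by self-adjointness of $G_m$ on $V_m(\Omega)$, $g\in\mathrm{Ran}(G_m-\mu)\cap\ker(G_m-\mu)=\{0\}$. With that made explicit, the eigenvalue counting in your stability step is legitimate.

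For part (ii) your skeleton matches the paper's (uniform $H^1$ bound via \eqref{stimanorme2} and \eqref{DmM}, subsequence extraction, dichotomy on $\widetilde{\mu}_1(m)$, identification of the limit through simplicity and positivity, and the subsequence-uniqueness argument), but the mechanism for strong convergence differs. The paper passes to the limit in the weak formulation \eqref{falena}, uses the normalization identity \eqref{normaliz2} to see that the weak limit $z$ is nonzero, and only afterwards upgrades weak to strong convergence via convergence of norms; you instead exploit the operator identity $E_{m_i}(\widetilde{u}_{m_i})=\widetilde{\mu}_1(m_i)\widetilde{u}_{m_i}$ together with Lemma \ref{teo1} and the compactness of $E_m$ (iii of Proposition \ref{proprietaop}) to obtain strong $H^1$ convergence first, after which nondegeneracy of the limit comes for free from $\|w\|_{V_m(\Omega)}=1$. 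This is arguably cleaner, and your closing remark—recovering lower semicontinuity of $\widetilde{\mu}_1$ directly by testing \eqref{mu1} with $P_{m_i}(u_m)$—is a sound elementary fallback, though once (i) is proved it is not needed.
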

\begin{proof}

i)   Let $\{m_i\}$ be a sequence which weakly* converges to $m$ in $L^\infty_<(\Omega)$. 
Being $\{m_i\}$ bounded in $L^\infty(\Omega)$, there exists a constant $M>0$ such that
\eqref{M} holds.
Let  $k=1, 2, 3, \ldots$, we show that  
\begin{equation}\label{modulo}
 |\widetilde{\mu}_k(m_i)- \widetilde{\mu}_k(m)|\leq D(m,M)(C^2\cdot D^2(m,M)+1)^{1/2} \|E_{m_i}-E_m\|_{\mathcal{L}(H^1(\Omega), H^1(\Omega))},   
\end{equation}  
where $D(m,M)$ is the constant in \eqref{DmM}. 
By Lemma \ref{teo1} the claim follows.   
We split the argument in three cases.\\  
 
Case 1.  $\widetilde{\mu}_k(m_i)$ ($i$ fixed), $\ \widetilde{\mu}_k(m)>0$.\\ Following       \cite[Theorem 2.3.1]{He} and by means of the Fischer's Principle \eqref{Fischer1} we have 
\begin{equation*} \begin{split}
\widetilde{\mu}_k(m_i)- \widetilde{\mu}_k(m) &=  \max_{F_k\subset V_{m_i}(\Omega)}
\min_{f\in F_k\atop f\neq 0}
\cfrac{\langle G_{m_i} f, f \rangle_{V_{m_i}(\Omega)} }{\|f\|^2_{V_{m_i}(\Omega)}} \,
- \max_{F_k\subset V_m(\Omega)}
\min_{f\in F_k\atop f\neq 0}
\cfrac{\langle G_m f, f \rangle_{ V_m(\Omega)} }{\|f\|^2_{ V_m(\Omega)}} \,\\
& \leq 
\min_{f\in \overline{F_k}\atop f\neq 0}
\cfrac{\langle G_{m_i} f, f \rangle_{V_{m_i}(\Omega)} }{\|f\|^2_{V_{m_i}(\Omega)}} \,
- 
\min_{f\in P_m (\overline{F_k})\atop f\neq 0}
\cfrac{\langle G_m f, f \rangle_{V_m(\Omega)} }{\|f\|^2_{V_m(\Omega)}} \,\\
& \leq 
\cfrac{\langle G_{m_i} \overline{f}, \overline{f} \rangle_{V_{m_i}(\Omega)} }{\|\overline{f}\|^2_{V_{m_i}(\Omega)}} \,-
\cfrac{\langle G_m (P_m(\overline{f})), P_m(\overline{f}) \rangle_{V_m(\Omega)} }{\|P_m(\overline{f})\|^2_{V_m(\Omega)}}\, ,  
\end{split} 
\end{equation*}
where $\overline{F_k}$ is a $k$-dimensional subspace of $V_{m_i}(\Omega)$ such that (note that, by vii) of Proposition \ref{prop1},  $P_m (\overline{F_k})$ is a $k$-dimensional subspace of $V_{m}(\Omega)$)             
$$\max_{F_k\subset V_{m_i}(\Omega)}
\min_{f\in F_k\atop f\neq 0}
\cfrac{\langle G_{m_i} f, f \rangle_{V_{m_i}(\Omega)} }{\|f\|^2_{V_{m_i}(\Omega)}} \, =\min_{f\in \overline{F_k}\atop f\neq 0}
\cfrac{\langle G_{m_i} f, f \rangle_{V_{m_i}(\Omega)} }{\|f\|^2_{V_{m_i}(\Omega)}} \,$$
and $\overline{f}$ is a function in $\overline{F_k}$ such that 
$$ \min_{f\in P_m(\overline{F_k})\atop f\neq 0}   
\cfrac{\langle G_m f, f \rangle_{V_m(\Omega)} }{\|f\|^2_{V_m(\Omega)}} \,=
\cfrac{\langle G_m(P_m(\overline{f})), P_m(\overline{f}) \rangle_{V_m(\Omega)} }
{\|P_m(\overline{f})\|^2_{V_m(\Omega)}} \,.
$$      
By iv) of Proposition \ref{prop1}  $\nabla P_m(\overline{f})=\nabla\overline{f}$ and $\nabla G_m(P_m(\overline{f}))=\nabla P_{m_i}(G_m(P_m(\overline{f})))$ hold, thus we have
\begin{equation*}   
\begin{split}
\widetilde{\mu}_k(m_i)- \widetilde{\mu}_k(m) & \leq 
\cfrac{\langle G_{m_i} \overline{f}, \overline{f} \rangle_{V_{m_i}(\Omega)} }{\|\overline{f}\|^2_{V_{m_i}(\Omega)}} \,-
\cfrac{\langle P_{m_i}(G_m (P_m(\overline{f}))),\overline{f}\rangle_{V_{m_i}(\Omega)} }{\|\overline{f}\|^2_{V_{m_i}(\Omega)}} \,\\
& = \cfrac{\langle (G_{m_i}-P_{m_i}\circ G_m\circ P_m)(\overline{f}), \overline{f} \rangle_{V_{m_i}(\Omega)} }{\|\overline{f}\|^2_{V_{m_i}(\Omega)}} \,\leq  
 \cfrac{\|(G_{m_i}-P_ {m_i}\circ G_m\circ P_m)(\overline{f})\|_{V_{m_i}(\Omega)}}{\|\overline{f}\|_{V_{m_i}(\Omega)}}\,.
\end{split}  
\end{equation*}
Then, using the identities 
$$G_{m_i}-P_{m_i}\circ G_m\circ P_m=P_{m_i}\circ G_{m_i}\circ P_{m_i}-P_{m_i}\circ G_m\circ P_m=P_{m_i}\circ(G_{m_i}\circ P_{m_i}-G_m\circ P_m),$$ 
\eqref{stimanorme2}, \eqref{stima2}, \eqref{stimami}, \eqref{c1}, \eqref{DmM} 
and i) of Proposition \ref{proprietaop} we find        
\begin{equation*}
\begin{split}     
\widetilde{\mu}_k(m_i)- \widetilde{\mu}_k(m) &\leq\cfrac{\| P_{m_i}\circ(G_{m_i}\circ P_{m_i}-G_m\circ P_m)(\overline{f})\|_{V_{m_i}(\Omega)}}{\|\overline{f}\|_{V_{m_i}(\Omega)}}\\
&=\cfrac{\| P_{m_i}\circ(G_{m_i}\circ P_{m_i}-G_m\circ P_m)(\overline{f})\|_{H^1(\Omega)}}{\|\overline{f}\|_{V_{m_i}(\Omega)}}\\                                 
&\leq (C^2\cdot C_1^2(m_i)+1)^{1/2}\|P_{m_i}\|_{\mathcal{L}(H^1(\Omega), H^1(\Omega))}\cfrac{\|(G_{m_i}\circ P_{m_i}-G_m\circ P_m)(\overline{f})\|_{H^1(\Omega)} }{\|\overline{f}\|_{H^1(\Omega)}}\\     
&\leq C_1(m_i)(C^2\cdot C_1^2(m_i)+1)^{1/2}\|G_{m_i}\circ P_{m_i}-G_m\circ P_m\|
_{\mathcal{L}(H^1(\Omega), H^1(\Omega))}\\       
&\leq D(m,M)(C^2\cdot D^2(m,M)+1)^{1/2} \|E_{m_i}-E_m\|_{\mathcal{L}(H^1(\Omega), H^1(\Omega))}.     
\end{split} 
\end{equation*}   
Interchanging the role of $m_i$ and $m$ and replacing \eqref{stimami} by \eqref{stimam}, we  
also have  
\begin{equation*}
 \widetilde{\mu}_k(m)- \widetilde{\mu}_k(m_i)\leq D(m,M)(C^2\cdot D^2(m,M)+1)^{1/2} \|E_{m_i}-E_m\|_{\mathcal{L}(H^1(\Omega), H^1(\Omega))}
\end{equation*} 
 and finally \eqref{modulo}. 

Case 2. $\widetilde{\mu}_k(m_i)>0$, $ \widetilde{\mu}_k(m)=0$ (and similarly in the case $\widetilde{\mu}_k(m)>0$, $ \widetilde{\mu}_k(m_i)=0$).\\
Note that in this case \eqref{tilde} holds for the weight function $m$. Then
the previous argument still applies provided that we replace the first step of the inequality chain by 
\begin{equation*} |\widetilde{\mu}_k(m_i)- \widetilde{\mu}_k(m) |= 
\widetilde{\mu}_k(m_i) \leq \max_{F_k\subset V_{m_i}(\Omega)}
\min_{f\in F_k\atop f\neq 0}
\cfrac{\langle G_{m_i} f, f \rangle_{V_{m_i}(\Omega)} }{\|f\|^2_{V_{m_i}(\Omega)}} \,
- \sup_{F_k\subset V_m(\Omega)}
\min_{f\in F_k\atop f\neq 0}
\cfrac{\langle G_m f, f \rangle_{V_m(\Omega)} }{\|f\|^2_{V_m(\Omega)}} \,.
\end{equation*}
  
Case 3. $\widetilde{\mu}_k(m_i)= \widetilde{\mu}_k(m)=0$.\\
In this case \eqref{modulo} is obvious.\\  
Therefore statement i) is proved.\\                
ii) Let $\{m_i\}, m$ be such that $m_i$ is weakly$^*$ convergent
to $m\in L_<^\infty(\Omega)$. By using \eqref{stimanorme2}, \eqref{stimami} and 
\eqref{DmM}, for $i$ sufficiently large, we have  $$\|\widetilde{u}_{m_i}\|_{H^1(\Omega)} 
\leq\left(C^2\cdot D(m,M)^2+1\right)^{1/2},$$           
up to a subsequence we can assume that $\widetilde{u}_{m_i}$ is weakly convergent to $z
\in H^1(\Omega)$, strongly in $L^2(\Omega)$ and pointwisely a.e. in $\Omega$. \\
First suppose $\widetilde{\mu}_1(m)=0$. Then, by i) $\widetilde{\mu}_1(m_i) \widetilde{u}_{m_i}$
weakly converges in $H^1(\Omega)$ to $\widetilde{\mu}_1(m)z=0= \widetilde{\mu}_1(m) \widetilde{u}_{m}$. Moreover, $\|\widetilde{\mu}_1(m_i) \widetilde{u}_{m_i}\|_{H^1(\Omega)}
=\widetilde{\mu}_1(m_i)\| \widetilde{u}_{m_i}\|_{H^1(\Omega)}$ tends to $0=\|\widetilde{\mu}_1(m) \widetilde{u}_{m}\|_{H^1(\Omega)}$. Therefore $\widetilde{\mu}_1(m_i) \widetilde{u}_{m_i}$ 
strongly converges to $\widetilde{\mu}_1(m) \widetilde{u}_{m}$ in $H^1(\Omega)$.\\
Next, consider the case $\widetilde{\mu}_1(m)>0$. By i) we have $\widetilde{\mu}_1(m_i)>0$
for all $i$ large enough. This implies  $\widetilde{\mu}_1(m_i)
=\frac{1}{\lambda_1(m_i)}\, $ and $\widetilde{u}_{m_i}= u_{m_i}$. Positiveness and pointwise convergence of $u_{m_i}$ to $z$ imply $z\geq 0$ a.e. in $\Omega$.
Moreover, by \eqref{normaliz2} we have
$$ \int_\Omega m_i u^2_{m_i} \, dx =\cfrac{1}{\lambda_1(m_i)}\, $$
and by i), passing to the limit, we find 
$$ \int_\Omega m z^2 \, dx =\cfrac{1}{\lambda_1(m)}\, ,$$
which implies $z\neq 0$. By using \eqref{falena} for $u_{m_i}$  
we have
\begin{equation*}\langle \nabla u_{m_i}, \nabla\varphi\rangle_{L^2(\Omega)} =   
\lambda_1(m_i) \langle m_i u_{m_i}, \varphi\rangle_{L^2(\Omega)} = \lambda_1(m_i)
\int_\Omega  m_i u_{m_i} \varphi \, dx \quad \forall\, \varphi \in H^1(\Omega)
\end{equation*} and, letting $i$ to $\infty$, we deduce $z=u_m$.\\
By i) $\mu_1(m_i) u_{m_i}$     
weakly converges in $H^1(\Omega)$ to $\mu_1(m)u_{m}$ and 
 $\|\mu_1(m_i)u_{m_i}\|_{H^1(\Omega)}
=\mu_1(m_i)$ tends to $\mu_1(m) =\|\mu_1(m) u_{m}\|_{H^1(\Omega)}$.
Hence $\mu_1(m_i) u_{m_i}$
strongly converges to $\mu_1(m)u_{m}$ in $H^1(\Omega)$.\\  
The last claim is immediate provided one observes that $\widetilde{\mu}_1(m)>0$ 
implies $\widetilde{\mu}_1(m_i)>0$ for all $i$ large enough.
\end{proof}

\begin{lemma}\label{teo2} Let $m, q\in L_<^\infty(\Omega)$,     
$\widetilde{\mu}_1(m)$ be defined as in \eqref{muk} for $k=1$. Then\\
i) the map $m\mapsto \widetilde{\mu}_1(m)$      
 is convex on $L_<^\infty(\Omega) $; \\
ii) if $m$ and  $q$ are linearly independent and $ \widetilde{\mu}_1(m),  \widetilde{\mu}_1(q)>0$, then 
\begin{equation*} \widetilde{\mu}_1(tm+(1-t)q)< t \widetilde{\mu}_1(m)+(1-t)  \widetilde{\mu}_1(q)  
\end{equation*}
for all $0<t<1$.\\ 
\end{lemma}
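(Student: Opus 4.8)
The plan is to work throughout with the $m$-independent variational characterization recorded just after \eqref{2a}. For non-constant $u\in H^1(\Omega)$ set
$$R(u,m)=\frac{\int_\Omega m\,u^2\,dx}{\int_\Omega|\nabla u|^2\,dx}.$$
Taking reciprocals in that characterization, $\mu_1(m)=\max_u R(u,m)$ whenever $|\{m>0\}|>0$, the maximum being attained exactly at the principal eigenfunctions; and $R(u,m)\le 0$ for every non-constant $u$ when $|\{m>0\}|=0$. Hence, reproducing \eqref{muk} in both cases of Remark \ref{oss1}, one has for every $m\in L_<^\infty(\Omega)$
$$\widetilde{\mu}_1(m)=\max\Big\{0,\ \sup_{u}R(u,m)\Big\},$$
the supremum ranging over non-constant $u\in H^1(\Omega)$. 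The decisive feature is that the test space no longer depends on $m$, and that for each fixed $u$ the map $m\mapsto R(u,m)$ is linear.

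For part i) I would simply observe that, for fixed non-constant $u$, the numerator $\int_\Omega m\,u^2\,dx$ is linear in $m$ while the denominator is independent of $m$, so $m\mapsto R(u,m)$ is linear and in particular convex. Therefore $m\mapsto\sup_u R(u,m)$ is convex as a pointwise supremum of convex functions, and $\widetilde{\mu}_1=\max\{0,\sup_u R(u,\cdot)\}$ is convex as the maximum of two convex functions on the convex set $L_<^\infty(\Omega)$. No spectral information is needed here.

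For part ii) I would write $m_t=tm+(1-t)q\in L_<^\infty(\Omega)$ for $0<t<1$ and split into two cases. If $\widetilde{\mu}_1(m_t)=0$, strictness is immediate, since $t\widetilde{\mu}_1(m)+(1-t)\widetilde{\mu}_1(q)>0$ by hypothesis. If instead $\widetilde{\mu}_1(m_t)=\mu_1(m_t)>0$, let $u_t=u_{m_t}$ be the positive (hence non-constant) principal eigenfunction of $m_t$. Linearity of the numerator gives
$$\widetilde{\mu}_1(m_t)=R(u_t,m_t)=t\,R(u_t,m)+(1-t)\,R(u_t,q),$$
while the characterization above yields $R(u_t,m)\le\widetilde{\mu}_1(m)$ and $R(u_t,q)\le\widetilde{\mu}_1(q)$. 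Were both equalities, then $R(u_t,m)=\mu_1(m)$ and $R(u_t,q)=\mu_1(q)$, so $u_t$ would realize the maximum of the Rayleigh quotient for both weights and hence, by simplicity of the principal eigenvalue, be a principal eigenfunction of both $m$ and $q$: $-\Delta u_t=\lambda_1(m)\,m\,u_t=\lambda_1(q)\,q\,u_t$. Since $u_t>0$ a.e.\ in $\Omega$ by the Harnack inequality, this forces $\lambda_1(m)\,m=\lambda_1(q)\,q$ a.e., contradicting the linear independence of $m$ and $q$. Thus at least one inequality is strict, and the displayed identity gives $\widetilde{\mu}_1(m_t)<t\widetilde{\mu}_1(m)+(1-t)\widetilde{\mu}_1(q)$.

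The main obstacle is precisely the equality analysis in part ii): converting ``$u_t$ simultaneously realizes both maxima'' into the pointwise identity $\lambda_1(m)\,m=\lambda_1(q)\,q$. This rests on the earlier facts that a maximizer of the Rayleigh quotient is a principal eigenfunction, that simplicity identifies it up to a scalar, and that strict positivity of the eigenfunction allows dividing it out; the case split isolating $\widetilde{\mu}_1(m_t)=0$ is needed exactly because in that regime no maximizer is available.
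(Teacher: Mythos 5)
Your proof is correct, but it takes a genuinely lighter route than the paper's. The paper works throughout with the constrained characterization \eqref{Fischer1}/\eqref{basta} over the $m$-dependent spaces $V_m(\Omega)$, so in part i) it must move a test function $f\in V_{tm+(1-t)q}(\Omega)$ into $V_m(\Omega)$ and $V_q(\Omega)$ by hand, via the projections $P_m$, $P_q$ and the inequality $\int_\Omega mf^2\,dx\leq\int_\Omega m\big(P_m(f)\big)^2\,dx$, valid precisely because $\int_\Omega m\,dx<0$; that is the entire content of the chain \eqref{A1}. You sidestep this by invoking the unconstrained characterization over $H^1(\Omega)$ recorded after \eqref{2a}, for which the test space is $m$-independent, so that $\widetilde{\mu}_1=\max\{0,\sup_u R(u,\cdot)\}$ is manifestly a supremum of $m$-linear maps and convexity is immediate, with no spectral input. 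In part ii) the two arguments share the same skeleton: equality forces the principal eigenfunction $u_t$ of the intermediate weight to be a principal eigenfunction of both $m$ and $q$, and subtracting the two weak equations and using $u_t>0$ yields $\lambda_1(m)\,m=\lambda_1(q)\,q$ a.e., contradicting linear independence. You reach this directly, without the paper's intermediate identification $u_t=u_m=u_q$ through simplicity and the normalization \eqref{normaliz1} --- which your argument indeed does not need, since satisfying both eigenvalue equations already suffices. Your case split (with the easy case $\widetilde{\mu}_1(tm+(1-t)q)=0$ dispatched by positivity of the right-hand side) matches the paper's proof by contradiction, where equality in \eqref{conv} forces $\widetilde{\mu}_1(tm+(1-t)q)>0$. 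What your route buys is brevity and transparency; what the paper's buys is self-containedness, since the $H^1(\Omega)$ characterization is stated there without proof.

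One step you should make explicit: you use that a non-constant maximizer of $R(\cdot,m)$ over $H^1(\Omega)$ is necessarily a principal eigenfunction, whereas the paper proves the equality case only for the constrained maximum \eqref{mu1} over $V_m(\Omega)$. The fix is two lines and is exactly the paper's projection trick: if $R(u_t,m)=\mu_1(m)>0$, then since $\nabla P_m(u_t)=\nabla u_t$ and
\begin{equation*}
\int_\Omega m\,\big(P_m(u_t)\big)^2\,dx=\int_\Omega m\,u_t^2\,dx-\frac{\left(\int_\Omega m\,u_t\,dx\right)^2}{\int_\Omega m\,dx}\geq\int_\Omega m\,u_t^2\,dx
\end{equation*}
because $\int_\Omega m\,dx<0$, one gets $\mu_1(m)=R(u_t,m)\leq R\big(P_m(u_t),m\big)\leq\mu_1(m)$; the resulting equality forces $\int_\Omega m\,u_t\,dx=0$, so $u_t=P_m(u_t)\in V_m(\Omega)$ attains the maximum in \eqref{mu1} and is therefore an eigenfunction relative to $\mu_1(m)$ (this is also where the paper extracts $\int_\Omega m\,u\,dx=\int_\Omega q\,u\,dx=0$ in its own proof). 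With this inserted, your argument is complete.
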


\begin{proof}
i) The Fischer's Principle \eqref{Fischer1} and \eqref{tilde} both for $k=1$ yield
\begin{equation}\label{basta}
\sup_{f\in V_m(\Omega) \atop f\neq 0}
\cfrac{\int_\Omega mf^2\,dx}{\int_\Omega|\nabla f|^2\,dx}\,\leq \widetilde{ \mu}_1(m)  
\end{equation}
for every $m \in L_<^\infty(\Omega)$. Moreover, if $\widetilde{ \mu}_1(m)>0$, then equality sign holds and  
the supremum is attained when $f$ is an eigenfunction of $\widetilde{\mu}_1(m)=\mu_1(m)$.
Let $m, q\in L_<^\infty(\Omega)$, $0\leq t\leq 1$. We show that  
\begin{equation}\label{conv}
\widetilde{\mu}_1(tm + (1-t)q)\leq t \widetilde{\mu}_1 (m) +(1-t) \widetilde{\mu}_1 (q).
\end{equation}
If $\widetilde{\mu}_1(tm + (1-t)q)=0$, \eqref{conv} is obvious. Suppose 
$\widetilde{\mu}_1(tm + (1-t)q)>0$. Then, for all $f\in V_{tm + (1-t)q}(\Omega)$, $f\neq 0$, we 
have  
\begin{equation} \label{A1} 
\begin{split}
\cfrac{\int_\Omega (tm+(1-t)q)f^2\,dx}{\int_\Omega|\nabla f|^2\,dx}   
\,=&\,t\,\cfrac{\int_\Omega mf^2\,dx}{\int_\Omega|\nabla f|^2\,dx}\, 
+(1-t)\, \cfrac{\int_\Omega qf^2\,dx}{\int_\Omega|\nabla f|^2\,dx}\\ 
\leq &\,t\,\cfrac{\int_\Omega mf^2\,dx-\frac{\left(\int_\Omega mf\,dx\right)^2}{\int_\Omega m\,dx}}{\int_\Omega|\nabla f|^2\,dx}\,        
+(1-t)\,\cfrac{\int_\Omega qf^2\,dx-\frac{\left(\int_\Omega qf\,dx\right)^2}{\int_\Omega q\,dx}}{\int_\Omega|\nabla f|^2\,dx}\\          
=&\,t\,\cfrac{\int_\Omega m(P_m(f))^2\,dx}{\int_\Omega|\nabla P_m(f)|^2\,dx}\, 
+(1-t)\, \cfrac{\int_\Omega q(P_q(f))^2\,dx}{\int_\Omega|\nabla P_q(f)|^2\,dx}\\ 
\leq&\, t\widetilde{\mu}_1 (m) +(1-t) \widetilde{\mu}_1 (q),\\   
\end{split}         
\end{equation}
where we used iv) of Proposition \ref{prop1} and \eqref{basta} for $m$ and $q$. Taking the supremum in the left-hand term  of \eqref{A1}   
and using \eqref{basta} again with equality sign,  
we find \eqref{conv}.\\
ii) Arguing by contradiction, we suppose that 
equality holds in \eqref{conv}. We will conclude that $m$ and $q$ are linearly dependent. 
Equality sign in \eqref{conv} implies   
 $\widetilde{\mu}_1(tm + (1-t)q)>0$, then (by \eqref{basta}) the equality also holds in \eqref{A1} with 
$f=u=u_{tm+(1-t)q}$.  We get $\int_\Omega mu\,dx=\int_\Omega qu\,dx=0$, thus $u\in V_m(\Omega)\cap V_q(\Omega)$, and  then  
$$\cfrac{\int_\Omega mu^2\,dx}{\int_\Omega|\nabla u|^2\,dx}\, =\widetilde{\mu}_1(m) \,\quad
 \text{ and } \quad\cfrac{\int_\Omega qu^2\,dx}{\int_\Omega|\nabla u|^2\,dx}\, 
 =\widetilde{\mu}_1(q).$$    
 The simplicity 
of the principal eigenvalue, the positiveness of $u$ and the normalization \eqref{normaliz1} 
imply that    
$u=u_m=u_q$.  
By using  \eqref{falena} with $\lambda=\frac{1}{\tilde{\mu}_1(m)}$ and $\lambda=\frac{1}{\tilde{\mu}_1(q)}$ we have
$$ \langle \nabla u, \nabla\varphi \rangle_{L^2(\Omega)} =\cfrac{1}{ \widetilde{\mu}_1(m)}\, 
\langle m u, \varphi\rangle_{L^2(\Omega)} \quad \forall\, \varphi \in H^1(\Omega) 
$$
and
$$\langle\nabla u, \nabla\varphi\rangle_{L^2(\Omega)} =  \cfrac{1}{ \widetilde{\mu}_1(q)}\, 
\langle q u, \varphi\rangle_{L^2(\Omega)} \quad \forall\, \varphi\in H^1(\Omega) .$$
Taking the difference of these identities we find 
$$\left\langle \left( \cfrac{m}{ \widetilde{\mu}_1(m)}\, -\cfrac{q}{ \widetilde{\mu}_1(q)}\, \right) u, \varphi\right\rangle_{L^2(\Omega)} =0 \quad \forall\, \varphi\in H^1(\Omega), $$
which gives $m\widetilde{\mu}_1(q)-q\widetilde{\mu}_1(m)=0$, i.e.   
 $m$ and $q$ are linearly dependent.\\
\end{proof}
         
\begin{corollary} Let $m_0\in L_<^\infty(\Omega)$,  $\widetilde{\mu}_1(m)$ be defined as in 
\eqref{muk} for $k=1$ and $\overline{\mathcal{G}(m_0)}$ the weak* closure in $L^\infty(\Omega)$ of the class of rearrangements $\mathcal{G}(m_0)$ introduced in 
Definition \ref{class}. Then the map $m\mapsto \widetilde{\mu}_1(m)$ is convex  but not
strictly convex on $\overline{\mathcal{G}(m_0)}$. 
\end{corollary}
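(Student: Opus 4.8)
This part is routine. Since $\int_\Omega m_0\,dx<0$, Corollary~\ref{minore}(i) gives $\int_\Omega m\,dx=\int_\Omega m_0\,dx<0$ for every $m\in\overline{\mathcal{G}(m_0)}$, so $\overline{\mathcal{G}(m_0)}\subset L^\infty_<(\Omega)$, and $\overline{\mathcal{G}(m_0)}$ is convex by Proposition~\ref{convexity}(ii). Hence convexity of $m\mapsto\widetilde{\mu}_1(m)$ on $\overline{\mathcal{G}(m_0)}$ is nothing but the restriction of Lemma~\ref{teo2}(i) to this convex subset.

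\textbf{Non-strict convexity: the forced strategy.} The plan is to exhibit a nondegenerate segment on which $\widetilde{\mu}_1$ is affine. The choice of such a segment is essentially dictated by Lemma~\ref{teo2}(ii). Indeed, every element of $\overline{\mathcal{G}(m_0)}$ has nonzero integral, so by Corollary~\ref{minore}(ii) any two distinct elements are linearly independent; consequently Lemma~\ref{teo2}(ii) forces \emph{strict} inequality on any segment whose endpoints both satisfy $\widetilde{\mu}_1>0$. Therefore a violation of strict convexity must occur on a segment with at least one endpoint where $\widetilde{\mu}_1=0$, and by \eqref{muk} this means $|\{m>0\}|=0$, i.e. $m\le 0$ a.e. The cleanest target is then to take \emph{both} endpoints non-positive: if $m,q\in\overline{\mathcal{G}(m_0)}$ with $m,q\le0$ a.e., every convex combination $tm+(1-t)q$ is $\le0$ a.e., so $\widetilde{\mu}_1(tm+(1-t)q)=0=t\,\widetilde{\mu}_1(m)+(1-t)\,\widetilde{\mu}_1(q)$ for all $t\in(0,1)$, giving equality. (With only one non-positive endpoint one typically still gets strict inequality, so two non-positive endpoints is the natural requirement.)

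\textbf{The construction (main obstacle).} It remains to produce two distinct non-positive elements of $\overline{\mathcal{G}(m_0)}$; here I assume, as one must for the statement to be non-vacuous, that $m_0$ is nonconstant. The first is the constant $\bar m:=\frac{1}{|\Omega|}\int_\Omega m_0\,dx<0$, which lies in $\overline{\mathcal{G}(m_0)}$ because $\bar m\prec m_0$ by Proposition~\ref{prec2} together with Proposition~\ref{convexity}(i). The hard part is the second element: a nonconstant non-positive $f$ with $f\prec m_0$, the difficulty being the verification of the majorization in Definition~\ref{prec1}. Set $h(t):=\int_0^t m_0^*\,ds-\bar m\,t$; then $t\mapsto\int_0^t m_0^*\,ds$ is concave, $h(0)=h(|\Omega|)=0$, and $h(t)>0$ on $(0,|\Omega|)$ precisely because $m_0$ is nonconstant (so $\essinf m_0<\bar m<\esssup m_0$). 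I then choose a non-positive decreasing profile that is a small perturbation of the constant $\bar m$, for instance $f^*(s)=\bar m+\varepsilon\bigl(\tfrac{|\Omega|}{2}-s\bigr)$ with $\varepsilon>0$ small; it is decreasing, satisfies $\int_0^{|\Omega|}f^*\,ds=\int_\Omega m_0\,dx$, and is $\le0$ once $\varepsilon\le 2|\bar m|/|\Omega|$. The majorization reduces to the inequality $\int_0^t f^*\,ds-\bar m\,t=\frac{\varepsilon}{2}\,t(|\Omega|-t)\le h(t)$ for all $t\in[0,|\Omega|]$, and since the left-hand side scales linearly with $\varepsilon$ while $h$ is fixed and positive on the interior with strictly positive/negative one-sided derivatives at the endpoints, the ratio $\tfrac12 t(|\Omega|-t)/h(t)$ stays bounded, so the inequality holds for $\varepsilon$ small enough. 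Hence $f^*\prec m_0^*$, i.e. $f\prec m_0$, for any $f$ with this decreasing rearrangement. Such an $f$ is non-positive, nonconstant, and distinct from $\bar m$; the segment joining $\bar m$ and $f$ then carries equality by the previous paragraph, proving that $m\mapsto\widetilde{\mu}_1(m)$ is convex but not strictly convex on $\overline{\mathcal{G}(m_0)}$.
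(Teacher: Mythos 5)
Your proof is correct, and its core idea coincides with the paper's: exhibit a nondegenerate segment in $\overline{\mathcal{G}(m_0)}$ consisting of non-positive functions, on which $\widetilde{\mu}_1$ vanishes identically by \eqref{muk}, so that the convexity inequality holds with equality. The difference lies in how the second endpoint is produced. The paper gets it for free from convexity of $\overline{\mathcal{G}(m_0)}$: since $m_0$ and the constant $c=\frac{1}{|\Omega|}\int_\Omega m_0\,dx$ both lie in $\overline{\mathcal{G}(m_0)}$, so does $tm_0+(1-t)c$ for every $t\in[0,1]$, and the elementary bound $tm_0+(1-t)c\leq t\|m_0\|_{L^\infty(\Omega)}+(1-t)c$ shows these are $\leq 0$ a.e.\ for all $t\leq c/(c-\|m_0\|_{L^\infty(\Omega)})\in(0,1)$ --- no rearrangement computation at all. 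You instead build a nonconstant non-positive element by hand, prescribing the decreasing profile $f^*(s)=\bar m+\varepsilon(\tfrac{|\Omega|}{2}-s)$ and verifying $f^*\prec m_0^*$ via the concavity of $t\mapsto\int_0^t m_0^*\,ds$ and the endpoint slopes $\esssup m_0-\bar m>0$, $\essinf m_0-\bar m<0$; this checks out, though note two small points: you could short-circuit the entire majorization estimate by taking $f=t_0m_0+(1-t_0)\bar m$ (which is exactly the paper's move), and you silently invoke the standard but unproved fact that some $f\in L^\infty(\Omega)$ exists with the prescribed decreasing rearrangement (e.g.\ $f=f^*\circ\phi$ for a measure-preserving $\phi:\Omega\to(0,|\Omega|)$). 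On the credit side, your argument adds two things the paper omits: the observation, via Corollary \ref{minore}(ii) and Lemma \ref{teo2}(ii), that any failure of strict convexity \emph{must} occur on a segment meeting $\{\widetilde{\mu}_1=0\}$ (so the strategy is forced, not merely convenient), and the explicit nonconstancy hypothesis on $m_0$, without which $\overline{\mathcal{G}(m_0)}=\{m_0\}$ and both the statement and the paper's segment degenerate --- a caveat the published proof leaves implicit.
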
  
\begin{proof}

By ii) of Proposition \ref{convexity} and Corollary \ref{minore},  we have  that $
\overline{\mathcal{G}(m_0)}$ is convex and $\overline{\mathcal{G}(m_0)}\subset 
L_<^\infty(\Omega)$. Then, by Lemma \ref{teo2}, the map $m\mapsto \widetilde{\mu}_1(m)$ is 
convex on $\overline{\mathcal{G}(m_0)}$.  \\
Applying Proposition \ref{prec2}, we find that the constant function $c=\frac{1}{|\Omega|}
\int_\Omega m_0\,dx$ is in $\overline{\mathcal{G}(m_0)}$. By convexity of $
\overline{\mathcal{G}(m_0)}$, $tm_0+(1-t)c\in \overline{\mathcal{G}(m_0)}$ for every 
$t\in [0,1]$.  From the inequality  
$$tm_0+(1-t)c\leq t\|m_0\|_{L^\infty(\Omega)}+(1-t)c\quad\text{a.e. in }\Omega,$$
we obtain 
$$tm_0+(1-t)c\leq 0\quad\text{ a.e.  in }\Omega \quad \forall\, t\leq \frac{c}{c-\|m_0\|
_{L^\infty(\Omega)}}. $$
 Note that $c/\!\!\left(c-\|m_0\|_{L^\infty(\Omega)}\right)\in (0,1)$. 
Therefore, by \eqref{muk}, we conclude that $\widetilde{\mu}_1(m)=0$ for any 
$m$ in the line segment, contained in $\overline{\mathcal{G}(m_0)}$, that joins $c$ and $$
\frac{c}{c-\|m_0\|_{L^\infty(\Omega)}} m_0+\left(1-\frac{c}{c-\|m_0\|_{L^\infty(\Omega)}}
\right)c=\frac{\|m_0\|_{L^\infty(\Omega)}-m_0}{\|m_0\|_{L^\infty(\Omega)}-c}c.$$
This shows that the map $m\mapsto \widetilde{\mu}_1(m)$ is not
strictly convex.    
\end{proof} 

For the definitions and some basic results on the G\^ateaux differentiability
we refer the reader to \cite{ET}.

\begin{lemma}\label{teo3} 
Let $m\in L_<^\infty(\Omega)$,  $\widetilde{\mu}_1(m)$ be
 defined as in \eqref{muk} for $k=1$ and $u_m$ denote the relative unique positive 
eigenfunction of problem \eqref{p0} normalized as in \eqref{normaliz1}.
Then, the map $m\mapsto  \widetilde{\mu}_1(m)$ 
is G\^ateaux differentiable at any $m$ such that $ \widetilde{\mu}_1(m)>0$, 
with G\^ateaux differential equal to $u_m^2$. In other words,
 for every direction $v\in L^\infty(\Omega)$  
  we have  
\begin{equation}\label{gatto} \widetilde{\mu}_1'(m; v) =\int_\Omega u_m^2 v\, dx. \end{equation}
\end{lemma}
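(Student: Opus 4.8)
The plan is to read off the derivative from the variational characterization of $\mu_1$ as a maximum of affine-in-$m$ functionals, and then to close the gap between a lower and an upper bound by a squeezing (envelope-type) argument. Introduce the Rayleigh quotient $R_m(u)=\int_\Omega m u^2\,dx\big/\int_\Omega|\nabla u|^2\,dx$. The characterization recalled after \eqref{2a} says that $\widetilde{\mu}_1(m)=\mu_1(m)=\max_{u}R_m(u)$ over non-constant $u\in H^1(\Omega)$, the maximizer being $u_m$; moreover, for \emph{any} non-constant $u$ one has $R_m(u)\le\mu_1(m)$, since a $u$ with $\int_\Omega mu^2\,dx\le 0$ gives $R_m(u)\le 0<\mu_1(m)$, while a $u$ with positive numerator is an admissible competitor. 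By the normalization \eqref{normaliz1}--\eqref{normaliz2}, $\int_\Omega|\nabla u_m|^2\,dx=1$ and $\int_\Omega m u_m^2\,dx=\mu_1(m)$. Fix $v\in L^\infty(\Omega)$ and set $m_t=m+tv$. Since $\mu_1(m)>0$, Lemma~\ref{teo1ii}~(i) ensures that for $|t|$ small $m_t\in L_<^\infty(\Omega)$ and $\widetilde{\mu}_1(m_t)=\mu_1(m_t)>0$, so that $\widetilde{u}_{m_t}=u_{m_t}$.

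First I would record two elementary one-line comparisons. Testing the characterization of $\mu_1(m_t)$ with the fixed competitor $u_m$ gives
\[
\mu_1(m_t)\ \ge\ R_{m_t}(u_m)\ =\ \int_\Omega m_t\,u_m^2\,dx\ =\ \mu_1(m)+t\int_\Omega v\,u_m^2\,dx .
\]
Symmetrically, testing the characterization of $\mu_1(m)$ with $u_{m_t}$ yields $\int_\Omega m\,u_{m_t}^2\,dx=R_m(u_{m_t})\le\mu_1(m)$, whence, using $\int_\Omega|\nabla u_{m_t}|^2\,dx=1$,
\[
\mu_1(m_t)=\int_\Omega m_t\,u_{m_t}^2\,dx\ =\ \int_\Omega m\,u_{m_t}^2\,dx+t\int_\Omega v\,u_{m_t}^2\,dx\ \le\ \mu_1(m)+t\int_\Omega v\,u_{m_t}^2\,dx .
\]

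Combining the two bounds, for $t>0$ the difference quotient is trapped,
\[
\int_\Omega v\,u_m^2\,dx\ \le\ \frac{\mu_1(m_t)-\mu_1(m)}{t}\ \le\ \int_\Omega v\,u_{m_t}^2\,dx ,
\]
and for $t<0$ the same two inequalities are reversed, so that the quotient lies between $\int_\Omega v\,u_{m_t}^2\,dx$ and $\int_\Omega v\,u_m^2\,dx$. In both cases the quotient is squeezed between $\int_\Omega v\,u_m^2\,dx$ and the varying quantity $\int_\Omega v\,u_{m_t}^2\,dx$. The final step is the passage $t\to 0$: by Lemma~\ref{teo1ii}~(ii), $u_{m_t}\to u_m$ in $H^1(\Omega)$, hence in $L^2(\Omega)$, and therefore $u_{m_t}^2\to u_m^2$ in $L^1(\Omega)$ (by Cauchy--Schwarz, $\|u_{m_t}^2-u_m^2\|_{L^1(\Omega)}\le\|u_{m_t}-u_m\|_{L^2(\Omega)}\,\|u_{m_t}+u_m\|_{L^2(\Omega)}$); since $v\in L^\infty(\Omega)$, this gives $\int_\Omega v\,u_{m_t}^2\,dx\to\int_\Omega v\,u_m^2\,dx$. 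The squeeze then forces the two-sided limit of the difference quotient to equal $\int_\Omega u_m^2\,v\,dx$, which is exactly \eqref{gatto}.

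The only genuinely nontrivial ingredient is the strong $L^2$ convergence $u_{m_t}\to u_m$ invoked in the last limit, and this is precisely what Lemma~\ref{teo1ii}~(ii) supplies, once the positivity $\mu_1(m_t)>0$ for small $t$ has been secured by the continuity in Lemma~\ref{teo1ii}~(i). Everything else is the bookkeeping of the two comparison inequalities above; in particular no second-order perturbation theory is required. The simplicity of $\mu_1(m)$ enters only implicitly, through the uniqueness of the normalized maximizer $u_m$, which is what makes the lower and upper bounds collapse to the same limit.
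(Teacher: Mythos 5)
Your proof is correct, but it takes a genuinely different route from the paper's. The paper cross-tests the two weak formulations: choosing $\varphi=u_{m+tv}$ in the equation for $u_m$ and $\varphi=u_m$ in the equation for $u_{m+tv}$, it derives the exact identity
\[
\cfrac{\widetilde{\mu}_1(m+tv)-\widetilde{\mu}_1(m)}{t}\,\int_\Omega m\,u_m u_{m+tv}\,dx
=\widetilde{\mu}_1(m)\int_\Omega u_m u_{m+tv}\,v\,dx,
\]
and then passes to the limit using Lemma \ref{teo1ii}~(ii) and the normalization \eqref{normaliz2}, noting that $\int_\Omega m\,u_m u_{m+tv}\,dx\to\int_\Omega m\,u_m^2\,dx=\mu_1(m)>0$ so that the division is legitimate for small $t$. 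You instead run a Danskin-type envelope argument: two one-sided comparisons obtained by testing each Rayleigh-quotient characterization with the other problem's eigenfunction, trapping the difference quotient between $\int_\Omega v\,u_m^2\,dx$ and $\int_\Omega v\,u_{m_t}^2\,dx$, and then squeezing. Your admissibility bookkeeping is sound: you correctly use the unconstrained characterization recorded after \eqref{2a} (so that $R_m(u)\le\mu_1(m)$ for every non-constant $u\in H^1(\Omega)$, with the sign dichotomy on $\int_\Omega m u^2\,dx$ handled explicitly), and the normalizations \eqref{normaliz1}--\eqref{normaliz2} give the unit denominators. Both proofs rest on the same two nontrivial inputs from Lemma \ref{teo1ii}: positivity of $\widetilde{\mu}_1(m_t)$ for small $t$ (part i) and strong convergence $u_{m_t}\to u_m$ (part ii). What each buys: the paper's identity yields the difference quotient exactly, entirely within the weak formulation, at the price of justifying a division; your squeeze avoids any division and generalizes more readily (envelope arguments give one-sided derivatives even without differentiability), at the price of invoking the $H^1$-based characterization. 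One small correction to your closing remark: the simplicity of $\mu_1(m)$ is not merely implicit in your argument --- it is used substantively inside Lemma \ref{teo1ii}~(ii) to identify the limit of $u_{m_t}$ as $u_m$, so both proofs depend on it through exactly the same channel.
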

 
\begin{proof}
Let us compute 
$$\lim_{t\to 0} \cfrac{ \widetilde{\mu}_1(m+ t v)- 
\widetilde{\mu}_1(m)}{t}\, .$$
Note that $m+tv\in L_<^\infty(\Omega)$ for $t$ sufficiently small and by i) of Lemma 
\ref{teo1ii}, $\widetilde{\mu}_1(m+ t v)$ converges to
$\widetilde{\mu}_1(m)$ as $t$ goes to zero for any $m\in L_<^\infty(\Omega)$ and $v\in L^\infty(\Omega)$. Therefore,  $\widetilde{\mu}_1(m+ t v)>0$ for $t$ small enough.
The eigenfunctions $u_{m}$ and $u_{m+tv}$ satisfy (see \eqref{falena})  
$$\widetilde{\mu}_1(m)\langle\nabla u_m, \nabla\varphi\rangle_{L^2(\Omega)} =
\langle m u_m, \varphi\rangle_{L^2(\Omega)} \quad \forall\, \varphi\in H^1(\Omega) $$
and   
$$\widetilde{\mu}_1(m+ t v)\langle\nabla u_{m+tv},\nabla \varphi\rangle_{L^2(\Omega)} =
\langle (m+tv) u_{m+tv}, \varphi\rangle_{L^2(\Omega)} \quad \forall\, \varphi\in 
H^1(\Omega). $$ 
By choosing $\varphi=u_{m+tv}$ in the former equation, $\varphi=u_m$ in the latter
and comparing we get     
\begin{equation*}
\widetilde{\mu}_1(m+ t v)
\langle m u_m, u_{m+tv}\rangle_{L^2(\Omega)}=
\widetilde{\mu}_1(m) 
\langle (m+tv) u_{m+tv}, u_m\rangle_{L^2(\Omega)}.
\end{equation*}
Rearranging we find
\begin{equation}\label{rap} \cfrac{\widetilde{\mu}_1(m+ t v)- 
\widetilde{\mu}_1(m)}{t}\, \int_\Omega m \, u_m
u_{m +tv}\, dx =\widetilde{\mu}_1(m) \int_\Omega  u_m
u_{m +tv} v\, dx .\end{equation}
If $t$ goes to zero,  
then by ii) of Lemma \ref{teo1ii} it follows that  
$u_{m+ t v}$ converges to $u_m$ in $H^1(\Omega)$ and therefore
in $L^2(\Omega)$. Passing to the limit      
in \eqref{rap} and using \eqref{normaliz2} we conclude
\begin{equation*}
\lim_{t\to 0} \cfrac{\widetilde{\mu}_1(m+ t v)- \widetilde{\mu}_1(m)}{t}\, =
\int_\Omega u_m^2 v\, dx,
\end{equation*}
i.e. \eqref{gatto} holds.\end{proof}

\begin{theorem}\label{lemmafond}  
Let $m_0\in L_<^\infty(\Omega)$, $\overline{\mathcal{G}(m_0)}$ be the weak* closure in $L^\infty(\Omega)$ of the class of rearrangements $\mathcal{G}(m_0)$ introduced in Definition \ref{class}, $\widetilde{\mu}_1(m)$       
 defined as in \eqref{muk} for $k=1$ and $u_m$ denote the relative unique positive 
eigenfunction of problem \eqref{p0} normalized as in \eqref{normaliz1}. Then \\ 
i) there exists a solution of the problem 
\begin{equation}\label{infclos}
\max_{m\in\overline{ \mathcal{G}(m_0)}}\tilde{\mu}_1(m);  
\end{equation} 
ii) if $|\{m_0>0\}|>0$,  any solution $\check m_1$ of \eqref{infclos} belongs to $\mathcal{G}(m_0)$, more explicitly, we have $\tilde\mu_1(m)<\tilde\mu_1(\check m_1)$ for all $m\in\overline{\mathcal{G}(m_0)}\smallsetminus
\mathcal{G}(m_0)$ (note that, in this case, by Proposition \ref{segnorho}
$\tilde\mu_1(\check m_1)=\mu_1(\check m_1)>0$); \\     
iii) if $|\{m_0>0\}|>0$, for every solution $\check{m}_1\in\mathcal{G}(m_0)$ of \eqref{infclos}
 there exists an increasing function $\psi$ such  
that           
\begin{equation*}\check{m}_1= \psi(u_{\check{m}_1}) \quad \text{a.e. in }\Omega, \end{equation*} where     
$u_{\check{m}_1}$ is 
the positive eigenfunction relative to $\mu_1(\check{m}_1)$ normalized 
as in \eqref{normaliz1}.    
\end{theorem}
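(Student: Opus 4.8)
The plan is to prove i) by the direct method, ii) by convexity together with the extreme-point structure of $\overline{\mathcal G(m_0)}$, and iii) by combining the first-order (Gâteaux) optimality condition with the rearrangement machinery, the crux being a uniqueness argument for an auxiliary linear problem.

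For i) I would invoke the direct method. By Proposition \ref{cane} the set $\overline{\mathcal G(m_0)}$ is sequentially weakly* compact, and by i) of Lemma \ref{teo1ii} the functional $m\mapsto\widetilde\mu_1(m)$ is sequentially weakly* continuous; hence any maximizing sequence has a weakly* convergent subsequence whose limit attains the supremum, which is therefore finite and realized. For ii), let $\check m_1$ be a maximizer (it exists by i)). Since $|\{m_0>0\}|>0$, every $m\in\mathcal G(m_0)$ satisfies $|\{m>0\}|=|\{m_0>0\}|>0$, so $\mu_1(m)>0$ by Proposition \ref{segnorho}; thus the maximal value $\widetilde\mu_1(\check m_1)$ is strictly positive. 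Arguing by contradiction, suppose $\check m_1\notin\mathcal G(m_0)$. By iii) of Proposition \ref{convexity}, $\check m_1$ is not an extreme point, so $\check m_1=\tfrac12(p+q)$ with $p\neq q$ in $\overline{\mathcal G(m_0)}$. Both $\widetilde\mu_1(p),\widetilde\mu_1(q)$ are positive: if, say, $\widetilde\mu_1(p)=0$, then convexity (i) of Lemma \ref{teo2}) and maximality give $\widetilde\mu_1(\check m_1)\le\tfrac12\widetilde\mu_1(q)\le\tfrac12\widetilde\mu_1(\check m_1)$, forcing $\widetilde\mu_1(\check m_1)\le 0$, absurd. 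By i) of Corollary \ref{minore}, $\int_\Omega p\,dx=\int_\Omega q\,dx=\int_\Omega m_0\,dx\neq0$, so by ii) of the same corollary $p\neq q$ forces $p,q$ linearly independent; the strict convexity in ii) of Lemma \ref{teo2} then gives $\widetilde\mu_1(\check m_1)<\tfrac12(\widetilde\mu_1(p)+\widetilde\mu_1(q))\le\widetilde\mu_1(\check m_1)$, a contradiction. Hence every maximizer lies in $\mathcal G(m_0)$, and consequently $\widetilde\mu_1(m)<\widetilde\mu_1(\check m_1)$ for every $m\in\overline{\mathcal G(m_0)}\setminus\mathcal G(m_0)$.

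For iii), fix a maximizer $\check m_1\in\mathcal G(m_0)$ and set $u=u_{\check m_1}$, so $\widetilde\mu_1(\check m_1)=\mu_1(\check m_1)>0$. First I would extract a first-order optimality condition. For any $m\in\overline{\mathcal G(m_0)}$ the segment $\check m_1+t(m-\check m_1)$ lies in $\overline{\mathcal G(m_0)}$ for $t\in[0,1]$ by convexity, and $t\mapsto\widetilde\mu_1(\check m_1+t(m-\check m_1))$ is maximal at $t=0$; taking the right derivative and using the Gâteaux differential \eqref{gatto} of Lemma \ref{teo3} yields $\int_\Omega u^2(m-\check m_1)\,dx\le 0$. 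Thus $\check m_1$ maximizes the linear functional $m\mapsto\int_\Omega m\,u^2\,dx$ over $\overline{\mathcal G(m_0)}$, hence over $\mathcal G(m_0)$. The decisive step is to show this linear maximizer is unique, after which Proposition \ref{Teobart} (applied with the weight $u^2\in L^1(\Omega)$) furnishes an increasing $\psi_0$ with $\check m_1=\psi_0\circ u^2$; since $u>0$, setting $\psi(s)=\psi_0(s^2)$ defines an increasing function with $\check m_1=\psi\circ u$, as required. To prove uniqueness, take any maximizer $m'$ over $\overline{\mathcal G(m_0)}$ and use the projection $P_{m'}$ of Definition \ref{def1}. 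Since $\int_\Omega m'\,dx=\int_\Omega m_0\,dx<0$ (Corollary \ref{minore}), a direct computation gives $\int_\Omega m'(P_{m'}u)^2\,dx=\int_\Omega m'u^2\,dx-\frac{\left(\int_\Omega m'u\,dx\right)^2}{\int_\Omega m'\,dx}\ge\int_\Omega m'u^2\,dx=\int_\Omega\check m_1u^2\,dx=\mu_1(\check m_1)$, where I used that $m'$ and $\check m_1$ share the maximal linear value and \eqref{normaliz2}. Because $\nabla P_{m'}u=\nabla u$ by iv) of Proposition \ref{prop1} and $\int_\Omega|\nabla u|^2\,dx=1$ by \eqref{normaliz1}, the variational characterization \eqref{mu1} gives $\mu_1(m')\ge\int_\Omega m'(P_{m'}u)^2\,dx\ge\mu_1(\check m_1)$; on the other hand $\mu_1(m')=\widetilde\mu_1(m')\le\widetilde\mu_1(\check m_1)=\mu_1(\check m_1)$ since $\check m_1$ is the global maximizer, so equality holds throughout. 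This forces $\int_\Omega m'u\,dx=0$ (hence $P_{m'}u=u$) and, by the equality case in \eqref{mu1}, that $u$ is a principal eigenfunction for $m'$; by simplicity and positivity $u=u_{m'}$ and $\mu_1(m')=\mu_1(\check m_1)$. Writing \eqref{falena} for $u$ with weights $m'$ and $\check m_1$ and subtracting gives $(\check m_1-m')u=0$ a.e., whence $m'=\check m_1$ because $u>0$.

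I expect the main obstacle to be precisely this uniqueness step. It is not automatic: the eigenfunction $u$ may possess level sets of positive measure on which, by the equation $-\Delta u=\lambda_1(\check m_1)\check m_1 u$, the weight $\check m_1$ is forced to vanish, and on such plateaus the linear functional $m\mapsto\int_\Omega mu^2\,dx$ cannot by itself distinguish competing rearrangements. The projection-plus-variational argument above circumvents this difficulty by upgrading ``$m'$ is a linear maximizer'' to ``$u$ is the principal eigenfunction of $m'$ with $\mu_1(m')=\mu_1(\check m_1)$'', and the simplicity of $\mu_1$ then collapses all competitors to $\check m_1$. The only other point requiring a little care is the harmless passage from a monotone function of $u^2$ to a monotone function of $u$, which is legitimate exactly because $u$ is strictly positive.
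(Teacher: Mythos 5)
Your proof is correct. Parts i) and ii) follow the paper's argument almost verbatim (direct method via Proposition \ref{cane} and Lemma \ref{teo1ii}; extreme points via Proposition \ref{convexity}, convexity and strict convexity from Lemma \ref{teo2}, and Corollary \ref{minore}), with only a cosmetic difference: you verify positivity of $\widetilde\mu_1(p),\widetilde\mu_1(q)$ before invoking strict convexity, whereas the paper first squeezes out equality and reads off positivity from it. Part iii) is where you genuinely diverge. The paper derives the strict inequality $\int_\Omega \check m_1 u_{\check m_1}^2\,dx>\int_\Omega m\,u_{\check m_1}^2\,dx$ for all $m\neq\check m_1$ directly from the subgradient inequality \eqref{maria} (convexity plus the G\^ateaux differential of Lemma \ref{teo3}, citing \cite{ET}), handling the case $\widetilde\mu_1(m)=\widetilde\mu_1(\check m_1)$ by applying \eqref{maria} at the midpoint $\frac{\check m_1+m}{2}$ together with strict convexity (Lemma \ref{teo2} ii)). You instead use only the one-sided first-order condition $\int_\Omega u^2(m-\check m_1)\,dx\le 0$ and then establish uniqueness of the linear maximizer by an independent mechanism: the projection identity $\int_\Omega m'(P_{m'}u)^2\,dx=\int_\Omega m'u^2\,dx-\bigl(\int_\Omega m'u\,dx\bigr)^2/\int_\Omega m'\,dx$, the Rayleigh quotient bound \eqref{basta}/\eqref{mu1} with $\|\nabla u\|_{L^2(\Omega)}=1$, the equality case forcing $u=u_{m'}$, and subtraction of the two weak formulations to get $(\check m_1-m')u=0$ a.e. This in effect re-runs, in a special case, the computation inside the paper's proof of Lemma \ref{teo2} ii), so your route trades the packaged convex-analysis fact \eqref{maria} for a self-contained variational argument; the paper's version is shorter because it reuses the strict-convexity lemma, while yours needs only the right derivative (not full G\^ateaux differentiability) and makes explicit where each strict inequality comes from. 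Two details you handle correctly that deserve the care you gave them: the positivity $\int_\Omega m'(P_{m'}u)^2\,dx\ge\mu_1(\check m_1)>0$ is what guarantees $|\{m'>0\}|>0$, so $\mu_1(m')$ is indeed defined and equals $\widetilde\mu_1(m')$; and the passage from Proposition \ref{Teobart} applied to $u^2$ to an increasing function of $u$ itself, which uses $u>0$ and which the paper glosses over.
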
    
\begin{proof}    
i) By Proposition \ref{minore} we have $\overline{\mathcal{G}(m_0)}\subset L^\infty_<(\Omega)
$.
By iii) of Proposition \ref{cane} and i) of Lemma \ref{teo1ii} respectively, $\overline{\mathcal{G}(m_0)}$ 
is sequentially weakly* compact and the map $m \mapsto \widetilde{\mu}_1(m)$
is sequentially weakly* continuous. Therefore, there exists $\check{m}_1\in 
\overline{\mathcal{G}(m_0)}$ such that 
\begin{equation*}  
\widetilde{\mu}_1(\check{m}_1)=\max_{m\in \overline{\mathcal{G}(m_0)}}\widetilde{ \mu}_1(m) .
\end{equation*} 
ii) Note that, by Proposition \ref{segnorho}, 
the condition $|\{m_0>0\}|>0$ guarantees $\widetilde{\mu}_1(m)>0$ on 
$\mathcal{G}(m_0)$ and then $\widetilde{\mu}_1(\check{m}_1)>0$.   
Let $\check{m}_1$ an arbitrary solution of \eqref{infclos}, let us show
 that $\check{m}_1$ actually belongs to $\mathcal{G}(m_0)$. 
 Proceeding by contradiction,  
 suppose that $\check{m}_1\not \in\mathcal{G}(m_0) $. Then, by iii) of Proposition
 \ref{convexity} and by Definition \ref{libellula}, $\check{m}_1$ is not an extreme point of $\overline{\mathcal{G}(m_0)}$
 and thus there exist $m, q \in \overline{\mathcal{G}(m_0)}$ such that $m\neq q$
 and $\check{m}_1= \frac{m + q}{2}\, $.
 By i) of Lemma \ref{teo2} and, being $\check{m}_1$ a maximizer, we have 
$$\widetilde{\mu}_1(\check{m}_1)\leq   
 \cfrac{\widetilde{\mu}_1(m) +\widetilde{\mu}_1(q)}{2}\,
\leq \widetilde{\mu}_1(\check{m}_1)$$ and
then, equality sign holds. This implies 
$\widetilde{\mu}_1(m) =\widetilde{\mu}_1(q)=\widetilde{\mu}_1(\check{m}_1)>0$,   
that is $m$ and $q$ are maximizers as well. Now, applying ii) of Lemma \ref{teo2}
to $m$ and $q$ with $t=\frac{1}{2}\, $, we conclude that $m$ and $q$ are linearly   
dependent and then, by ii) of Corollary \ref{minore}, $m=q$.             
Thus, we conclude that $\check{m}_1\in \mathcal{G}(m_0)$ and ii) is proved.\\      
iii) Let $\check{m}_1\in \mathcal{G}(m_0)$ a solution of \eqref{infclos}. We prove the claim by using Proposition \ref{Teobart}; more precisely, we show that   
\begin{equation}\label{silvia}        
\int_\Omega 
\check{m}_1 u_{\check{m}_1}^2\, dx> \int_\Omega 
m\, u_{\check{m}_1}^2\, dx\end{equation} 
for every $m\in \overline{\mathcal{G}(m_0)}\smallsetminus \{\check{m}_1\}$.
By exploiting the convexity of $\widetilde{\mu}_1(m)$ (see Lemma \ref{teo2}) and its 
G\^ateaux differentiability in $\check{m}_1$ (see Lemma \ref{teo3})
 we have (for details see \cite{ET})
 \begin{equation} \label{maria}
\widetilde{\mu}_1(m)\geq     
\widetilde{\mu}_1\big(\check{m}_1)
+\int_\Omega (m-
\check{m}_1) u_{\check{m}_1}^2\, dx
\end{equation} 
for all $m\in \overline{\mathcal{G}(m_0)}$.
  
First, let us suppose $\widetilde{\mu}_1(m)< \widetilde{\mu}_1(\check{m}_1)$.
Comparing with \eqref{maria} we find  
\begin{equation*}
\int_\Omega 
(m-\check{m}_1) u_{\check{m}_1}^2\, dx<0  ,\end{equation*}
that is \eqref{silvia}.\\    
Next, let us consider the case $\widetilde{\mu}_1(m)=\widetilde{\mu}_1(\check{m}_1)$,
$m\in \overline{\mathcal{G}(m_0)}\smallsetminus \{\check{m}_1\}$. By i) there are not maximizers of $\widetilde{\mu}_1$ in $\overline{\mathcal{G}(m_0)}\smallsetminus \mathcal{G}(m_0)$, therefore $m \in \mathcal{G}(m_0)$.    
    
Being $\check{m}_1\neq m$, by ii) of Corollary \ref{minore}, they are linearly independent.
Then, ii) of Lemma \ref{teo2} implies 
$$ \widetilde{\mu}_1 \left(\frac{\check{m}_1 + m}{2} \right)<\frac{\widetilde{\mu}_1(
\check{m}_1) +\widetilde{\mu}_1(m)}{2}\, = \widetilde{\mu}_1(
\check{m}_1).$$
Then, as in the previous step, \eqref{maria} with $\frac{\check{m}_1 + m}{2}\,$ in place of 
$m$ yields \eqref{silvia}.\\
This completes the proof.
\end{proof}  

We are now able to prove Theorem \ref{exist}.
\begin{proof}[Proof of Theorem \ref{exist}]         
Being $|\{m_0>0\}|>0$, we have  
$$\lambda_1(m)= \cfrac{1}{\mu_1(m)}\, =\cfrac{1}{\widetilde{\mu}_1(m)}\, $$
for all $m\in\mathcal{G}(m_0)$. Therefore, i) and ii) immediately follow by Theorem 
\ref{lemmafond}.\\
iii) Given      
that $\int_\Omega m_0 \, dx<0$, then, by Proposition \ref{prec2} and Proposition \ref{convexity},
the negative constant function $c=\frac{1}{|\Omega|}\, \int_\Omega m_0 \, dx$ belongs to 
$\overline{\mathcal{G}(m_0)}$. Therefore, by definition of $\widetilde{\mu}_1(m)$, 
$\min_{m \in \overline{\mathcal{G}(m_0)}} \widetilde{\mu}_1(m)=0$  which, in turns, 
being $\mathcal{G}(m_0)$ dense in $\overline{\mathcal{G}(m_0)}$ and 
$\widetilde{\mu}_1(m)$ sequentially weak* continuous, implies   
$\inf_{m \in \mathcal{G}(m_0)}{\mu}_1(m)=0$
and, finally, $\sup_{m \in \mathcal{G}(m_0)} \lambda_1(m)=+\infty$.
\end{proof}

\section{Monotonicity of the minimizers in cylinders}\label{monotonicity}
\noindent   
     
In this section we consider the optimization problem \eqref{infclos0} in cylindrical domains.
Here, by (generalized) cylinder we mean a domain of the type $\Omega=  (0,h) \times \omega\subset  \mathbb{R}^N$, where $h>0$ and $\omega \subset  \mathbb{R}^{N-1}$
is a smooth domain. In the sequel, for $x\in \mathbb{R}^N$ we will write $x= (x_1, x')$, with
$x_1\in\mathbb{R}$ and $x'= (x_2, \ldots, x_N)\in \mathbb{R}^{N-1}$. 
Then, exploiting the notion of monotone decreasing rearrangement, we will able to prove that any 
minimizer of  problem \eqref{infclos0} is monotone with respect to $x_1$. 
For a comprehensive 
survey of the monotone rearrangement we use here, we refer the reader to the work of 
Kawhol  (see \cite{Kaw}) and  Berestycki and Lachand-Robert (see \cite{BeL}).         
In this paper, for sake of simplicity, we choose to define this rearrangement only when the domain
is a cylinder and as particular case of the Steiner symmetrization, in order to deduce easily from it  
some of the properties we need.             
For a brief summary of the Steiner symmetrization see \cite{AC}.    
\begin{definition}\label{pim}
Let $\Omega=  (0,h) \times \omega$ where $h>0$ and $\omega \subset  \mathbb{R}^{N-1}$
is a smooth domain, and $u: \Omega\to \mathbb{R}$ a measurable function bounded from below.
Let $U$ be the extension of $u$ onto $(-h,h) \times \omega$ obtained by reflection with respect
to the hyperplane $\{x\in \mathbb{R}^N: x_1=0\}$  and $U^\sharp$ its Steiner symmetrization.
We define the {\it monotone decreasing rearrangement} $u^\star:\Omega\to\mathbb{R}$ of $u$
to be the restriction of $U^\sharp$ on $\Omega$.  
\end{definition}     
In a similar way it can be defined the {\it monotone increasing rearrangement} $u_\star$. 
Note that if $m\in\mathcal{G}(m_0)$, then $m^\star,m_\star\in\mathcal{G}(m_0)$.   
 \noindent  
From the properties on the Steiner symmetrization (see, for example \cite{AC}) and by
Definition \ref{pim},  we deduce the following first two properties of the monotone decreasing 
rearrangement.\\        
\emph{a}) Let $\Omega=  (0,h) \times \omega$,  $u: \Omega\to \mathbb{R}$ be a measurable 
function bounded from below and $\psi: \mathbb{R}\to\mathbb{R}$ an increasing function.
Then
\begin{equation}\label{crescente}     
(\psi(u))^\star=\psi(u^\star)\quad\text{a.e. in }\Omega. 
\end{equation}
\emph{b}) Let $\Omega=  (0,h) \times \omega$, $u, v:\Omega \to 
\mathbb{R}$ two  bounded measurable functions such that $u,v\in L^2(\Omega)$,
then the \emph{Hardy-Littlewood inequality} holds     
\begin{equation}\label{HL}
\int_\Omega uv\,dx\leq \int_{\Omega}u^\star v^\star\,dx.
\end{equation}
Moreover, from \cite[Theorem 2.8 and Lemma 2.10]{BeL} we have

\emph{c}) Let $\Omega=  (0,h) \times \omega$ and $u\in H^1(\Omega)$ a nonnegative function.
Then $u^\star\in H^1(\Omega^\star)$ and the \emph{P\`olya-Szeg\"o inequality} holds
\begin{equation}\label{PS}
\int_{\Omega}|\nabla u^\star|^2\,dx\leq\int_\Omega|\nabla u|^2\,dx. 
\end{equation} 
More generally, it holds the following
\begin{equation}\label{PSB}  
\int_{\Omega}|\nabla_{x'} u^\star|^2\,dx\leq\int_\Omega|\nabla_{x'} u|^2\,dx,\quad\int_{\Omega}|(u^\star)_{x_1}|^2\,dx\leq\int_\Omega|u_{x_1}|^2\,dx.   
\end{equation}        

The equality case of \eqref{PS} is addressed in Theorem 3.1 of  \cite{BeL}. 

We prove Theorem   \ref{steiner}.  
\begin{proof}[Proof of Theorem \ref{steiner}]  
 Let $\check  m$ be a minimizer of problem \eqref{infclos0}. In what follows  we use the ideas of
 the proof of Theorem 2 in \cite{AC}. By ii) of Theorem \ref{exist}, 
there exists an increasing function $\psi$  such that $\check{m}=\psi(u_{\check m})$ a.e. in $\Omega$, where  $u_{\check{m}}\in V_{\check m}(\Omega)$ denotes the unique positive eigenfunction normalized 
by $\|u_{\check m}\|_{V_{\check m}(\Omega)}= 1$.  Therefore, the monotonicity   
of $\check{m}$ is an immediate consequence of the monotonicity of $u_{\check m}$.  
Hence, it suffices to show that either $u_{\check m}=u_{\check m}^\star$ or 
$u_{\check m}={(u_{\check m})}_\star$.       
By using \eqref{mu1} we find  
\begin{linenomath} 
\begin{equation*} 
\check\mu_1=\mu_1(\check m)=\cfrac{\int_\Omega \check m u_{\check m}^2
\,dx}{\int_\Omega |\nabla u_{\check m}|^2\,dx}\,.    
\end{equation*}
\end{linenomath}
The inequalities \eqref{HL}, property \eqref{crescente} and Definition \ref{def1} yield   
\begin{linenomath}  
\begin{equation*}
 \int_\Omega \check m u_{\check m}^2\,dx\leq\int_\Omega \check m^\star (u_{\check m}^\star)^2\,dx=\int_\Omega \check m^\star\big(P_{\check m^\star}(u_{\check m}^\star)
 \big)^2\,dx+\frac{1}{\int_\Omega \check m^\star\,dx}\left(\int_\Omega \check m^\star u_{\check m}^\star\,dx\right)^2         
\end{equation*}  
and \eqref{PS} and iv) of Proposition \ref{prop1} give 
\begin{equation*}  
\int_\Omega |\nabla u_{\check m}|^2\,dx\geq \int_\Omega |\nabla u_{\check m}^\star|
^2\,dx=\int_\Omega |\nabla P_{\check m^\star}(u_{\check m}^\star)|^2\,dx.    
\end{equation*}   
\end{linenomath}
Note that, $\check m^\star\in\mathcal{G}(m_0)$ (in particular $\int_\Omega \check m^\star\,dx
<0$) and  $P_{\check m^\star}(u_{\check m}^\star)\in V_{{\check m}^\star}$.         
Exploiting \eqref{mu1} and the maximality of $\check\mu_1$ we can write
\begin{equation}\label{rocek}
\begin{split}   
\check\mu_1
&=\cfrac{\int_\Omega \check m u_{\check{m}}^2
\,dx}{\int_\Omega |\nabla u_{\check m}|^2\,dx}
\leq\cfrac{\int_\Omega \check m^\star 
(u_{\check m}^\star)^2\,dx}{\int_\Omega |\nabla u_{\check m}^\star|^2\,dx}
\leq\cfrac{\int_\Omega \check m^\star\big(P_{\check m^\star}(u_{\check m}^\star)
 \big)^2\,dx+\frac{1}{\int_\Omega \check m^\star\,dx}\left(\int_\Omega \check m^\star u_{\check m}^\star\,dx\right)^2 }{\int_\Omega |\nabla P_{\check m^\star}(u_{\check m}^\star)|^2\,dx}\\
&\leq\cfrac{\int_\Omega \check m^\star\big(P_{\check m^\star}(u_{\check m}^\star)
 \big)^2\,dx}{\int_\Omega |\nabla P_{\check m^\star}(u_{\check m}^\star)|^2\,dx}
\leq \cfrac{\int_\Omega \check m^\star (u_{\check m^\star})^2
\,dx}{\int_\Omega |\nabla u_{\check m^\star}|^2\,dx}\,= 
\mu_1(\check m^\star)\leq\check\mu_1.     
\end{split}             
\end{equation}  
Therefore, all the previous inequalities become equalities and yield $\int_\Omega\check m^\star  
u_{\check m}^\star\,dx=0$, which implies  $u_{\check m}^\star\in V_{{\check m}^\star}$, and
\begin{linenomath}   
\begin{equation}\label{K}
\int_\Omega \check m u_{\check m}^2\,dx=\int_\Omega \check m^\star (u_{\check m}^\star)^2
\,dx,\quad\quad\int_\Omega |\nabla u_{\check m}|^2\,dx= \int_\Omega |\nabla u_{\check m}
^\star|^2\,dx.  
\end{equation}   
\end{linenomath}
         
Furthermore, by \eqref{2a}, $u_{\check{m}}^\star$ is an eigenfunction associated to $\lambda_1(\check m^\star)$. By the simplicity of $\lambda_1(\check m^\star)$,
$u_{\check m}^\star$ being positive in $\Omega$ and, by \eqref{K},       
$\|u_{\check m}^\star\|_{V_{\check m^\star}(\Omega)}=\|u_{\check m}\|_{V_{\check m}
(\Omega)}=1$, we conclude that $u_{\check m}^\star=u_{\check m^\star}$.
For simplicity of notation, we put $v=u_{\check m}^\star=u_{\check m^\star}$.
By \eqref{rocek}, $\check m^\star$ is a minimizer of \eqref{infclos0}
and $v$ is the normalized positive eigenfunction associated to
$\lambda_1(\check{m}^\star)=\check\lambda_1$.
Moreover, by ii) of Theorem \ref{exist}, there exists an 
increasing function $\Psi$ such that $\check m^\star=\Psi(v)$ a.e in $\Omega$.      
Thus $v$ satisfies the problem        
\begin{equation}\label{10}
\begin{cases}-\Delta v =\check\lambda_1 \Psi(v) v\quad &\text{in } \Omega,\\
\frac{\partial v}{\partial \nu}=0 &\text{on } \partial\Omega. \end{cases}  
\end{equation}
Let $C_{0,+}^\infty({\Omega})=\{\varphi\in C_0^\infty({\Omega}): \varphi \text{ is 
nonnegative}\}$.\\
 From \eqref{10} in
weak form we have
\begin{linenomath}
$$\int_{\Omega} \nabla v\cdot \nabla \varphi_{x_1}\,dx=\check\lambda_1\int_{\Omega}
\Psi(v)v\,\varphi_{x_1}\,dx\quad\forall \varphi\in C^\infty_{0,+}({\Omega}).$$
\end{linenomath}
Being $v\in W^{2,2}_{\rm loc}(\Omega)$ (see \cite{GT}),   we can rewrite the previous equation as  
\begin{linenomath}
$$-\int_{\Omega} \nabla v_{x_1}\cdot \nabla \varphi\,dx=\check\lambda_1\int_{\Omega}
\Psi(v)v\,\varphi_{x_1}\,dx.$$
\end{linenomath}   
Adding  $\check\lambda_1\int_{\Omega}
\Psi(v)v_{x_1}\,\varphi\,dx$ to both sides and since $v\in C^{1,\beta}(\Omega)$ for all 
$\beta\in(0,1)???$ (see \cite{GT}), 
it becomes   
\begin{equation}\label{E-1}
-\int_{\Omega} \nabla v_{x_1}\cdot \nabla \varphi\,dx+\check\lambda_1\int_{\Omega}
\Psi(v)v_{x_1}\,\varphi\,dx=\check\lambda_1\int_{\Omega} \Psi(v)(v\,\varphi)_{x_1}\,dx.
\end{equation}
Let us show that $\int_{\Omega} \Psi(v)(v\,\varphi)_{x_1}\,dx\geq 0.$
By Fubini's Theorem we get
\begin{equation}\label{E0}
\int_{\Omega} \Psi(v)(v\,\varphi)_{x_1}\,dx=
\int_{\omega}dx'\int_0^h \Psi(v)(v\,\varphi)_{x_1}\,dx_1.
\end{equation}
For any fixed $x'\in\omega$, let $\alpha=\alpha(x_1)=v(x_1,x')\,\varphi(x_1,x')$. 
Since $\varphi$ has compact support, we can consider $\alpha$ trivially defined on 
the whole $[0, h]$.   
Since $\alpha(x_1)$ is continuous and $\Psi(v)$ is decreasing
with respect to $x_1$, the Riemann-Stieltjes integral $\int_0^{h} \Psi(v)\,d\alpha(x_1)$ is well defined (see Theorem 7.27 and the subsequent note in  \cite{A}).
Moreover, by using \cite[Theorem 7.8]{A} we have
\begin{equation}\label{E1}\quad
\int_0^{h} \Psi(v)(v\,\varphi)_{x_1}\,dx_1=\int_0^{h} \Psi(v)\,d\alpha(x_1).
\end{equation}
By \cite[Theorems 7.31 and 7.8]{A} there exists a point $x_0$ in $[0, h]$ such that
\begin{equation*}
\begin{split}
-\int_0^{h}\Psi(v)\,d\alpha(x_1)&=-\Psi(v(0,x'))\int_0^{x_0} \,d\alpha(x_1)-\Psi(v(h, x'))\int_{x_0}^{h} \,d\alpha(x_1)\\
&=-\Psi(v(0,x'))\int_0^{x_0}(v\,\varphi)_{x_1}\,dx_1-\Psi(v(h,x'))\int_{x_0}^{h}(v\,\varphi)_{x_1}\, dx_1.\\
\end{split}
\end{equation*}   
Computing the integrals and recalling that $\varphi\in C^\infty_{0,+}({\Omega})$, $v$ is positive and $\Psi(v)$ is decreasing, we conclude that
$$-\int_0^{h}\Psi(v)\,d\alpha(x_1)=v(x_0,x')\varphi(x_0,x')\left[\Psi(v(h,x'))-\Psi(v(0,x'))\right]\leq 0.$$
Therefore, by the previous inequality and \eqref{E1} it follows
$\int_0^{h} \Psi(v)(v\,\varphi)_{x_1}\,dx_1\geq 0$
for any $x'\in\omega$ and, in turn, from \eqref{E0} we obtain $\int_{\Omega} \Psi(v)(v\,\varphi)_{x_1}\,dx\geq 0$.
Hence, by \eqref{E-1}, $v_{x_1}$ satisfies the differential inequality
\begin{equation*}
\Delta v_{x_1}+\check\lambda_1\Psi(v)v_{x_1}\geq 0\quad \text{in }\Omega
\end{equation*}
in weak form.
Then, applying \cite[Theorem 2.5.3]{PS} and since $v_{x_1}\leq 0$ in $\Omega$, we conclude that either $v_{x_1}\equiv 0$ or
$v_{x_1}<0$.\\ 
In the first case, $v$, and then $u_{\check m}$, is constant with respect to $x_1$.\\
Let $v_{x_1}<0$. By the second equality of \eqref{K} and \eqref{PSB} we obtain
\begin{equation*}   
\int_{\Omega}|(u_{\check m})_{x_1}|^2\,dx=\int_\Omega|{(u_{\check m}^\star)}_{x_1}|^2\,dx.   
\end{equation*}     
 By Fubini's Theorem it becomes
\begin{equation}\label{ultima}
\int_{\omega}dx'\int_0^h |(u_{\check m})_{x_1}|^2\,dx_1=
\int_{\omega}dx'\int_0^h |{(u_{\check m}^\star)}_{x_1}|^2\,dx_1.
\end{equation} 

Being $u_{\check m}$ and $u_{\check m}^\star=u_{\check m\star}$   of class $C^{1,\beta}(\Omega)$,   the functions $x'\mapsto\int_0^h |(u_{\check m})_{x_1}|^2\,dx_1$ and
$x'\mapsto\int_0^h |{(u_{\check m}^\star)}_{x_1}|^2\,dx_1$  are continuous on $\omega$. 
Therefore,  using identity \eqref{ultima} and \eqref{PS} in the one dimensional case we find
\begin{equation*}  
\int_0^h |(u_{\check m})_{x_1}|^2\,dx_1=
\int_0^h |{(u_{\check m}^\star)}_{x_1}|^2\,dx_1\quad\forall x'\in\omega. 
\end{equation*}  
From Theorem 3.1 in \cite{BeL} again in the one dimensional case, we conclude that 
for all $x'\in \omega$ either  $u_{\check m}=u_{\check m}^\star $  or $u_{\check m}
=(u_{\check m})_\star$.  This implies that   for all  $x'\in \omega$ either  $(u_{\check m})_{x_1}
=(u_{\check m}^\star)_{x_1}=v_{x_1}<0$ or $(u_{\check m})_{x_1}=((u_{\check 
m})_\star)_{x_1}=-(u_{\check m}^\star)_{x_1}=-v_{x_1}>0$.  Being $(u_{\check m})_{x_1}$
continuous in $\Omega$ and $\Omega$ an open connected set, it follows that $(u_{\check 
m})_{x_1}$   does not change sign in $\Omega$, equivalently either  $u_{\check m}=u_{\check 
m}^\star $  or $u_{\check m}=(u_{\check m})_\star$ in the whole $\Omega$.        
Finally, by $\check{m}=\psi(u_{\check m})$,  we conclude that $\check m=\check m^\star$ or $\check m=\check m_\star$. This proves the theorem.
\end{proof}

\noindent \textbf{Acknowledgments}.
The authors are partially supported by the research project {\em Analysis of PDEs in connection 
with real phenomena}, CUP F73C22001130007, funded by \href{https://
www.fondazionedisardegna.it/}{Fondazione di Sardegna}, annuity 2021.   
The authors are members of GNAMPA (Gruppo Nazionale per l'Analisi Matematica, la 
Probabilit\`a e le loro Applicazioni) of INdAM (Istituto Nazionale di Alta Matematica
``Francesco Severi").\\
The authors also acknowledge the financial support under the National Recovery and Resilience Plan (NRRP), Mission 4 Component 2 Investment 1.5 - Call for tender No.3277 published on December 30, 2021 by the Italian Ministry of University and Research (MUR) funded by the European Union – NextGenerationEU. Project Code ECS0000038 – Project Title eINS Ecosystem of Innovation for Next Generation Sardinia – CUP F53C22000430001- Grant Assignment Decree No. 1056 adopted on June 23, 2022 by the Italian Ministry of University and Research (MUR).

\end{document}